\documentclass[11pt]{amsart}

\usepackage[margin=1.2in]{geometry}
\usepackage{amsmath, amssymb, amsthm, mathtools}
\usepackage{tikz}
\usetikzlibrary{arrows.meta, decorations.markings, calc, positioning}
\usepackage{aliascnt}
\usepackage[colorlinks=true, linkcolor=blue!60!black, citecolor=blue!60!black]{hyperref}
\usepackage[capitalize]{cleveref}

\theoremstyle{plain}
\newtheorem{theorem}{Theorem}[section]
\newaliascnt{lemma}{theorem}
\newtheorem{lemma}[lemma]{Lemma}
\aliascntresetthe{lemma}
\newaliascnt{proposition}{theorem}
\newtheorem{proposition}[proposition]{Proposition}
\aliascntresetthe{proposition}
\newaliascnt{corollary}{theorem}
\newtheorem{corollary}[corollary]{Corollary}
\aliascntresetthe{corollary}
\newaliascnt{claim}{theorem}

\aliascntresetthe{claim}
\newtheorem*{mainthm}{Main Theorem}

\theoremstyle{definition}
\newaliascnt{definition}{theorem}
\newtheorem{definition}[definition]{Definition}
\aliascntresetthe{definition}
\newaliascnt{convention}{theorem}
\newtheorem{convention}[convention]{Convention}
\aliascntresetthe{convention}
\newaliascnt{example}{theorem}

\aliascntresetthe{example}

\theoremstyle{remark}
\newaliascnt{remark}{theorem}
\newtheorem{remark}[remark]{Remark}
\aliascntresetthe{remark}

\newcommand{\supp}{\operatorname{supt}}
\newcommand{\spn}{\operatorname{span}}
\DeclareMathOperator{\Cl}{Cl}
\newcommand{\GB}{\mathcal{G}}               
\newcommand{\Kdyn}{K_{\mathrm{dyn}}}
\newcommand{\io}{\iota}
\newcommand{\ta}{\tau}
\newcommand{\eL}{e_{L}}
\newcommand{\eR}{e_{R}}
\newcommand{\DG}{\mathcal{D}}               
\newcommand{\Pres}{\mathcal{P}}
\newcommand{\R}{\mathbb{R}}
\newcommand{\Z}{\mathbb{Z}}
\newcommand{\Vin}{V_{\mathrm{in}}}
\newcommand{\gap}[1]{\Gamma_{#1}}           
\newcommand{\toppath}[1]{\lceil #1\rceil}
\newcommand{\botpath}[1]{\lfloor #1\rfloor}

\begin{document}

\title[Irreducible fast bump groups are Thompson's groups]
      {Irreducible fast sets of bump homeomorphisms generate copies of Thompson's groups $F_n$}

\author{Gili Golan}
\address{Department of Mathematics, Ben Gurion University of the Negev}
\email{golangi@bgu.ac.il}
\thanks{This research was supported by the ISRAEL SCIENCE FOUNDATION (grant 2275/24).}

\begin{abstract}
A homeomorphism of an interval is a \emph{positive bump} if its support
is a single open interval on which it moves every point to the right.
Choosing a fundamental domain $[m,b(m))$ for the action of a positive
bump $b$ on its support splits the remainder of the support into two
intervals, called the \emph{feet} of $b$.  A finite set of positive
bumps is \emph{geometrically fast} if fundamental domains can be chosen
so that all the resulting feet are pairwise disjoint.  The
\emph{crossing graph} of such a set has the bumps as its vertices, two
bumps being adjacent whenever their supports overlap but are not
nested, and the set is \emph{irreducible} if its crossing graph is
connected.  We prove that for every $n\geq2$, every group generated by
an irreducible geometrically fast set of $n$ positive bumps is
isomorphic to the $n$-ary Thompson group $F_n$.  This answers the
strong version of a problem posed by Brin and Zaremsky in the
Oberwolfach report ``Cohomological and metric properties of groups of
homeomorphisms of $\mathbb{R}$'' (Oberwolfach Rep.\ \textbf{15} (2018)).
\end{abstract}

\maketitle

\section{Introduction}\label{sec:intro}

Let $I\subseteq\R$ be an interval.  A \emph{positive bump} is an
orientation-preserving homeomorphism $b$ of $I$ whose support is a
single open interval
\[
\supp(b)=(a_b,c_b)
\]
and which moves every point of its support to the right.  Choose a
point $m_b\in\supp(b)$, called a \emph{marker}.  The interval
$[m_b,b(m_b))$ is a fundamental domain for the action of the cyclic
group $\langle b\rangle$ on $\supp(b)$, and its complement in the
support consists of two intervals,
\[
L_b=(a_b,m_b)
\qquad\text{and}\qquad
R_b=[b(m_b),c_b),
\]
called the left and right \emph{feet} of $b$.  Following Bleak,
Brin, Kassabov, Moore and Zaremsky \cite{BBKMZ}, a finite set $B$ of
positive bumps is \emph{geometrically fast}, or simply \emph{fast}, if
markers can be chosen so that all the feet of all the bumps in $B$ are
pairwise disjoint.

\begin{figure}[t]
\centering
\begin{tikzpicture}[scale=1.0, every node/.style={font=\small}]
\begin{scope}
    \draw[->] (-0.3,0) -- (3.6,0);
    \draw[line width=2.4pt,blue!65!black] (0,0) -- (0.55,0);
    \draw[line width=2.4pt,blue!65!black] (0.95,0) -- (1.5,0);
    \draw[thick,blue!65!black] (0,0) .. controls (0.35,0.85) and (1.15,0.85) .. (1.5,0);
    \draw[line width=2.4pt,red!70!black] (1.85,0) -- (2.4,0);
    \draw[line width=2.4pt,red!70!black] (2.8,0) -- (3.35,0);
    \draw[thick,red!70!black] (1.85,0) .. controls (2.2,0.85) and (3.0,0.85) .. (3.35,0);
    \node at (1.65,-0.5) {$L_1<R_1<L_2<R_2$};
    \node at (1.65,-1.05) {$\Z\times\Z$};
  \end{scope}
  \begin{scope}[xshift=4.6cm]
    \draw[->] (-0.3,0) -- (3.6,0);
    \draw[line width=2.4pt,blue!65!black] (0,0) -- (0.5,0);
    \draw[line width=2.4pt,blue!65!black] (2.85,0) -- (3.35,0);
    \draw[thick,blue!65!black] (0,0) .. controls (0.55,1.55) and (2.8,1.55) .. (3.35,0);
    \draw[line width=2.4pt,red!70!black] (1.05,0) -- (1.55,0);
    \draw[line width=2.4pt,red!70!black] (1.8,0) -- (2.3,0);
    \draw[thick,red!70!black] (1.05,0) .. controls (1.35,0.8) and (2.0,0.8) .. (2.3,0);
    \node at (1.65,-0.5) {$L_1<L_2<R_2<R_1$};
    \node at (1.65,-1.05) {$\Z\wr\Z$};
  \end{scope}
  \begin{scope}[xshift=9.2cm]
    \draw[->] (-0.3,0) -- (3.6,0);
    \draw[line width=2.4pt,blue!65!black] (0,0) -- (0.55,0);
    \draw[line width=2.4pt,blue!65!black] (1.5,0) -- (2.05,0);
    \draw[thick,blue!65!black] (0,0) .. controls (0.5,1.1) and (1.55,1.1) .. (2.05,0);
    \draw[line width=2.4pt,red!70!black] (0.85,0) -- (1.4,0);
    \draw[line width=2.4pt,red!70!black] (2.35,0) -- (2.9,0);
    \draw[thick,red!70!black] (0.85,0) .. controls (1.35,1.1) and (2.4,1.1) .. (2.9,0);
    \node at (1.65,-0.5) {$L_1<L_2<R_1<R_2$};
    \node at (1.65,-1.05) {$F$};
  \end{scope}
\end{tikzpicture}
\caption{The three
possible dynamical diagrams of two fast bumps, up to reflection:
disjoint, nested and crossing, together with the groups their fast
realizations generate \cite{BBKMZ}.}
\label{fig:intro-fast}
\end{figure}

Since the feet of a fast set are pairwise disjoint intervals, they
inherit a left-to-right order.  The \emph{dynamical diagram} of a fast
set records this order, together with the pairing of the two feet
belonging to each bump.  One may draw the diagram by placing the feet on
a line and joining the two feet of each bump by an arc, as in
\cref{fig:intro-fast}.  A fundamental theorem of \cite{BBKMZ} says that
this finite combinatorial object determines the marked isomorphism
type: if two fast sets have the same dynamical diagram, then the natural
bijection between their bumps extends to an isomorphism between the
groups they generate (see \cref{thm:diagram-isomorphism}).

Two bumps $x$ and $y$ of a fast set \emph{cross} when their feet
interlace:
\[
L_x<L_y<R_x<R_y
\qquad\text{or}\qquad
L_y<L_x<R_y<R_x;
\]
equivalently, when their supports overlap and neither contains the
other.  The \emph{crossing graph} of a fast set has one vertex for each
bump and one edge for each crossing pair, and, following Belk and Stott
\cite{BS}, we say that a fast set is \emph{irreducible} when its crossing graph is
connected.  As noted in \cite{BBN}, a fast set of positive bumps that is
not irreducible generates a group which decomposes non-trivially as a
direct product or as a permutational wreath product of groups generated
by smaller fast sets.  For $n\geq2$, let $\mathcal C_n$ denote the class
of groups generated by irreducible geometrically fast sets of $n$
positive bumps (\cref{def:Cn}).

The basic example of an irreducible fast set is a \emph{chain of $n$
bumps}: a fast set $\{b_1,\dots,b_n\}$ whose feet are ordered
\[
L_1<L_2<R_1<L_3<R_2<\cdots<L_n<R_{n-1}<R_n,
\]
so that consecutive bumps cross and the supports of non-consecutive
bumps are disjoint.  Its crossing graph is a path, and the group it
generates is the $n$-ary Thompson group $F_n$ \cite{BBKMZ}: the group of
piecewise-linear orientation-preserving homeomorphisms of $[0,1]$ with
finitely many breakpoints, all in $\Z[1/n]$, and slopes integral powers
of $n$, so that $F_2$ is Thompson's group $F$.  In particular, $F_n$ belongs to $\mathcal C_n$.

The problem of identifying the groups in $\mathcal C_n$ was raised in
the Oberwolfach report \cite{BBN}.  Questions~71 and~72 of the report
ask, for all $n\geq2$, whether every group in $\mathcal C_n$ is of
type $F_\infty$ and whether all the groups in $\mathcal C_n$ are
isomorphic; equivalently, whether they are all isomorphic to
Thompson's group $F_n$.  A similar question appears there as
Question~109, attributed to Brin and Zaremsky, and possibly others.
The report records that the answer to the isomorphism question was
suspected to be negative for $n>3$.

The answer was previously known to be positive in small cases.  For
$n=2$ it is immediate: the only irreducible dynamical diagram of two
bumps is the crossing pair, that is, the chain of two bumps
(\cref{fig:intro-fast}), so every group in $\mathcal C_2$ is isomorphic
to $F$ \cite{BBKMZ}.  For $n=3$, Brin, Bleak and Moore
showed that every group in $\mathcal C_3$ is isomorphic to the $3$-ary
Thompson group $F_3$ \cite[p.~1610]{BBN}: using conjugations that
preserve both fastness and the generated group, every irreducible
dynamical diagram of three bumps can be transformed into the chain of
three bumps.  For $n=4$, they showed that the groups in $\mathcal C_4$
form at most two isomorphism classes, one of which is the $4$-ary
Thompson group $F_4$ and the other of which they named pseudo-$F_4$;
however, they were not able to determine whether pseudo-$F_4$ is
isomorphic to $F_4$ \cite{BBN}.  Belk and Stott subsequently proved
that pseudo-$F_4$ is isomorphic to $F_4$ \cite{BS}; hence every group
in $\mathcal C_4$ is isomorphic to $F_4$.

Our main theorem shows that, contrary to what was suspected, this
pattern persists for every $n$, and it determines the classes
$\mathcal C_n$ completely.

\begin{mainthm}
For every $n\geq2$, every group generated by an irreducible
geometrically fast set of $n$ positive bumps is isomorphic to Thompson's group $F_n$.  Equivalently, $\mathcal C_n$ consists of
a single isomorphism class, represented by $F_n$.
\end{mainthm}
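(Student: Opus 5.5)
By \cref{thm:diagram-isomorphism}, the isomorphism type of the group generated by a fast set depends only on its dynamical diagram. The plan is therefore combinatorial. I will introduce a finite set of \emph{moves} on irreducible dynamical diagrams with $n$ bumps, and show two things: each move preserves the isomorphism type of the generated group, and every irreducible diagram can be carried by a finite sequence of moves to the chain of $n$ bumps. That chain generates $F_n$ by \cite{BBKMZ}.

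The moves will come from replacing one generator by a conjugate or a product. In the style of Brin, Bleak and Moore, given a fast set $B$ and bumps $x,y\in B$, I would replace $y$ by $x^{\pm1}yx^{\mp1}$, or by $xy$ when $y$ sits suitably on a foot of $x$. This gives a new set $B'$ generating the same group. The first lemma to prove is that, under explicit hypotheses on the relative positions of the feet of $x$ and $y$, the new set $B'$ is again geometrically fast, with new markers chosen by pushing those of $y$ through $x$. Its dynamical diagram is obtained from that of $B$ by a local rule, for example sliding a foot of $y$ across a foot of $x$ or into its other foot. Irreducibility must also be preserved, so I would check that the crossing graph stays connected under each admissible move. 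Belk--Stott's pseudo-$F_4$ example shows that conjugation moves alone may not suffice. For that reason I would also allow an \emph{isomorphism-level} move: if two diagrams generate isomorphic groups by some abstract argument, such as the $F_n$-like presentations of \cite{BS}, that counts as a move too.

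The main step is the reduction algorithm. I would define a complexity on irreducible diagrams, for instance the number of pairs of bumps that are nested or disjoint-but-not-consecutive, refined lexicographically by the total number of feet lying between the two feet of each bump. Then I would show that any non-chain irreducible diagram admits a move that strictly decreases this complexity. The natural approach is to look at a spanning tree of the crossing graph and pick a leaf bump $y$ crossing a bump $x$. When $y$ is nested inside some third bump, conjugating by the bump whose foot separates them should pull $y$'s feet outward. By induction on $n$, one can also first remove a leaf bump, apply the induction hypothesis to the rest to turn it into a chain, and then reattach the leaf. That reduces everything to the case of a chain of $n-1$ bumps plus one extra bump crossing some $b_i$, in one of finitely many positions.

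I expect this last case analysis to be the main obstacle. Specifically, it is moving the extra bump to the end of the chain when it crosses an interior $b_i$ and is entangled with several other bumps. Pseudo-$F_4$ is exactly the configuration where naive conjugations cycle. For this case I would first try a direct isomorphism instead of a diagram move. I would exhibit elements of the group satisfying the infinite presentation of $F_n$ with generators $x_0,x_1,\dots$ and relations $x_j^{x_i}=x_{j+n-1}$ for $i<j$, built from the bumps together with their conjugates. I would then verify, via the ping-pong/fastness machinery of \cite{BBKMZ}, that the resulting map is injective and surjective.
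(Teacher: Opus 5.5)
There is a genuine gap: the step that carries all of the difficulty is not proved. Your strategy needs every irreducible diagram to be carried to the chain of $n$ bumps. You yourself note that conjugation and product moves are not known to do this; pseudo-$F_4$ is exactly where the Brin--Bleak--Moore conjugations stall. The two devices you offer to fill the hole do not work.

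\emph{The isomorphism-level move.} As stated, it presupposes the isomorphisms you are trying to prove. It also cannot be combined with your induction. An abstract isomorphism between the group generated by the remaining $n-1$ bumps and the group of a chain is not induced by a change of generators inside $\langle B\rangle$. So it says nothing about where the reattached leaf sits, or about the group the whole set generates. What can be replayed after reattaching are operations on the full generating set inside the same ambient group. Even these lift cleanly only for a well-chosen deleted bump. If the leaf $y$ has both feet in the gap of some bump $w$, a closed cut of that gap in the smaller diagram can fall between them, and the corresponding swap has no lift.

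\emph{The residual case.} You propose to exhibit elements satisfying $x_j^{x_i}=x_{j+n-1}$, but you give no candidate elements and no surjectivity argument. Injectivity would be automatic once the image is non-abelian, since every proper quotient of $F_n$ is abelian. So the whole content lies in generating $\langle B\rangle$, which is essentially the theorem itself.

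\emph{The complexity.} The claim that a lexicographic complexity can always be strictly decreased is asserted, not proved. It is precisely what is not known for $n\ge4$.

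The missing idea is to change both the target and the invariant carried through the induction. The paper uses conjugation moves (swaps, \cref{lem:gap-swap}) only to reach a weaker normal form, peelability: each bump crosses an earlier one in the order of right feet. This form is always attainable (\cref{thm:peelable}). Deleting the bump with the last right foot, whose right foot lies in no gap word, is what makes swaps lift freely (\cref{lem:free-lift}).

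The group is then handled not by moving diagrams but through the Belk--Stott diagram-group presentation, realized as a directed $2$-complex. The induction along the peeling is carried out with Guba--Sapir moves. These preserve diagram groups and, unlike abstract isomorphisms, do lift when the last bump is added back (\cref{lem:lifting}). The endpoint is a dilated core rather than the chain, and its diagram group is shown directly to be $F_n$ (\cref{cor:dilated-Fn}). No reduction of the dynamical diagram to the chain is ever required.
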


We conclude the introduction with a brief sketch of the proof, which
combines the dynamics of groups of homeomorphisms of the interval with
the theory of diagram groups; it has two halves.  The first is
dynamical: we introduce the \emph{swap move}, a local rearrangement of
a dynamical diagram which is implemented on any fast realization by
conjugating some of the generators by one of the bumps, so that it
preserves fastness and does not change the generated group.  Using swap
moves, we show that every irreducible fast set can be replaced, without
changing the group it generates, by one whose dynamical diagram is
\emph{peelable}: it remains irreducible under repeated deletion of the
rightmost bump.  This is what makes an induction on the number of
bumps possible.

The second half takes place in the world of diagram groups.  By a
theorem of Belk and Stott \cite{BS}, the group generated by a fast set
is a diagram group over an explicit semigroup presentation read off
from its dynamical diagram.  We realize (an extension of) this
presentation as a directed $2$-complex and show, by induction along the
peeling and using the Tietze-like moves of Guba and Sapir \cite{GS06},
that for a peelable fast set of $n$ bumps the complex can be
transformed, without changing its diagram group, into one of a family
of model complexes built around the Stallings core of $F_n$
\cite{GolanSapirTAMS,GGSFn}, which we call \emph{dilated cores} and
whose diagram groups (over certain base paths) we prove to be
isomorphic to $F_n$.

\subsection*{Organization of the paper}
\Cref{sec:prelim} collects the necessary background on fast sets and
dynamical diagrams, on diagram groups over directed $2$-complexes and
the Guba--Sapir moves, and on the groups $F_n$.  \Cref{sec:swap}
introduces the swap move and proves the peelability theorem.
\Cref{sec:cores} presents the core of $F_n$ and introduces dilated
cores.  \Cref{sec:complex} constructs the directed $2$-complex of a
dynamical diagram.  \Cref{sec:reduction} proves the Reduction Theorem,
and \cref{sec:mainproof} completes the proof of the Main Theorem.

\subsection*{Acknowledgments}
The author would like to thank Eytan Sapir for helpful conversations
regarding the paper \cite{BS}.

\section{Preliminaries}\label{sec:prelim}

\subsection{Bumps, fast sets and dynamical diagrams}\label{subsec:fast}

Let $\operatorname{Homeo}^+(I)$ denote the group of orientation-preserving
homeomorphisms of an interval $I\subseteq\R$.  For
$f\in\operatorname{Homeo}^+(I)$ the \emph{support} of $f$ is
$\supp(f)=\{x\in I\mid f(x)\neq x\}$, and an \emph{orbital} of $f$ is a
maximal open interval on which $f$ has no fixed point (equivalently, a
connected component of $\supp(f)$).

\begin{definition}[Bumps]\label{def:bump}
A homeomorphism $b\in\operatorname{Homeo}^+(I)$ is a \emph{bump} if it has
exactly one orbital, and a bump $b$ is \emph{positive} if $b(x)\geq x$ for
all $x\in I$.  Thus the support of a positive bump $b$ is a single open
interval $(a_b,c_b)$, and $b(x)>x$ for all $x\in(a_b,c_b)$.
For a set $B$ of homeomorphisms, write
\[
\supp(B):=\supp(\langle B\rangle)
=\bigcup_{b\in B}\supp(b).
\]
\end{definition}

\begin{definition}[Markings, feet and fast sets; {\cite[\S3]{BBKMZ}}]\label{def:fast}
Let $B$ be a finite set of positive bumps, and write
$\supp(b)=(a_b,c_b)$.  A \emph{marking} of $B$ is a choice of a point
$m_b\in\supp(b)$ (a \emph{marker}) for each $b\in B$.  The marker
determines the two \emph{feet} of $b$: the \emph{left foot}
\[
L_b=(a_b,\,m_b)
\]
and the \emph{right foot}
\[
R_b=[\,b(m_b),\,c_b),
\]
also called the \emph{source} and the \emph{destination} of $b$.  The
support of $b$ is the disjoint union
\[
\supp(b)=L_b\sqcup[\,m_b,b(m_b))\sqcup R_b,
\]
and the fundamental domain $[\,m_b,b(m_b))$ is called the \emph{gap} of
$b$.

The set $B$ is \emph{geometrically fast} (or simply \emph{fast}) if it
admits a marking for which the $2|B|$ feet are pairwise disjoint.  Such a
marking is said to \emph{witness} fastness, and one is fixed whenever we
speak of a marked fast set.
\end{definition}

Since the feet of a fast set are pairwise disjoint intervals, they carry a
strict left-to-right order on the line.  The dynamical diagram records this
order, together with the information of which bump owns each foot and
whether it is a left or a right foot.

\begin{definition}[The dynamical diagram of a fast set]\label{def:diagram-fast}
Let $B=\{b_1,\dots,b_n\}$ be a fast set of positive bumps with the
indicated enumeration and with a marking witnessing fastness.  The
\emph{dynamical diagram of $B$} is the linear order on the formal letters
$L_1,R_1,\dots,L_n,R_n$ in which $L_i$ and $R_i$ occupy the positions of
the left and right feet of $b_i$, respectively.
\end{definition}

\begin{remark}[The diagram does not depend on the marking]\label{rem:marking-indep}
Any two markings witnessing fastness give the same order of the feet, so
the dynamical diagram depends only on the enumerated set of bumps.  The
order is determined by the support endpoints; for distinct $b,b'\in B$,
\[
L_b<L_{b'}\iff a_b<a_{b'},\qquad
R_b<R_{b'}\iff c_b<c_{b'},\qquad
L_b<R_{b'}\iff a_b<c_{b'}.
\]
Moreover, geometric fastness itself is intrinsic to $B$: a criterion in
terms of the support endpoints and their dynamics, together with a
construction of a witnessing marking, is given in
\cite[Proposition~4.3]{BBKMZ}.
\end{remark}

We now regard these linear orders as combinatorial objects in their own
right.

\begin{definition}[Dynamical diagrams]\label{def:dyndiag}
A \emph{dynamical diagram} on $n$ bumps is a linear order $D$, written
$<$, on the $2n$ formal letters $L_1,R_1,\dots,L_n,R_n$ such that
$L_i<R_i$ for every $i$.  The \emph{$i$-th bump} of $D$ is the formal
pair $\{L_i,R_i\}$, whose members are its \emph{left foot} and
\emph{right foot}; its \emph{span} is the open position interval
$\spn(i)=(L_i,R_i)$.

Two distinct bumps cross if their feet interlace, that is, if
\[
L_i<L_j<R_i<R_j
\qquad\text{or}\qquad
L_j<L_i<R_j<R_i.
\]
The $j$-th bump is
\emph{nested} in the $i$-th if
\[
L_i<L_j<R_j<R_i.
\]
If two bumps do not cross and neither is nested in the other, then their
spans are disjoint.
For a subset $S$ of the bumps, the \emph{sub-diagram} $D|_S$ is the
induced order on the feet of the bumps in $S$.

A fast set $B$ of $n$ positive bumps \emph{realizes} $D$ if it admits an
enumeration $B=\{b_1,\dots,b_n\}$ for which the dynamical diagram of
\cref{def:diagram-fast} is $D$.  A marked fast set with this property is
a \emph{fast realization} of $D$.
\end{definition}

In a realization $B=\{b_1,\dots,b_n\}$, the bumps in the diagram
corresponding to $b_i$ and $b_j$ cross precisely when their supports
overlap and neither contains the other.  The bump corresponding to $b_j$
is nested in the bump corresponding to $b_i$ precisely when
$\supp(b_j)\subseteq\supp(b_i)$.  Finally, the spans of the two diagram
bumps are disjoint precisely when the supports of $b_i$ and $b_j$ are
disjoint.

The dynamical diagrams of fast sets of positive bumps are exactly the
linear orders on $L_1,R_1,\dots,L_n,R_n$ satisfying $L_i<R_i$ for every
$i$: necessity is immediate, and sufficiency is
\cite[Theorem~1.2]{BBKMZ}.

We shall use the following isomorphism theorem.

\begin{theorem}[{\cite[Theorem~1.1]{BBKMZ}}]\label{thm:diagram-isomorphism}
If $B=\{b_1,\dots,b_n\}$ and $B'=\{b'_1,\dots,b'_n\}$ are geometrically
fast sets of positive bumps and the bijection $b_i\mapsto b'_i$ identifies
their dynamical diagrams, then this bijection extends to an isomorphism
between the groups they generate.
\end{theorem}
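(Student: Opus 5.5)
The plan is the ping-pong argument of \cite{BBKMZ}, which reduces the question to the combinatorics of words over the feet. Fix markings witnessing fastness for $B$ and $B'$, so that the $2n$ feet of each set are pairwise disjoint and carry the same order $D$. The first step is to build a marking-independent description of each group element by its action on feet. For a reduced word $w$ in $b_i^{\pm1}$, I track a \emph{dynamical itinerary}. The defining property of fast markings is that $b_i$ maps the complement of $L_i$ within $\supp(b_i)$ into $R_i$, and $b_i^{-1}$ maps the complement of $R_i$ into $L_i$. I would then show that, after a suitable reduction of $w$, a point lying in an appropriate foot is carried through a sequence of feet determined purely by $w$ and the order $D$. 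The reduction I have in mind is a normal form that discards letters acting trivially because the current foot lies outside the relevant support.

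Concretely, I would define a combinatorial relation on words using only $D$: a letter $b_i^{\pm1}$ is \emph{active} on a foot $X$ if $X$ lies in $\spn(i)$, read off from $D$, and otherwise it fixes $X$ pointwise. This gives a partial action of the free group on the finite set of feet, which I call the \emph{itinerary}. The key lemma is the following: if $w\in F(b_1,\dots,b_n)$ acts nontrivially as a homeomorphism, then there is a point in some foot whose orbit under the letters of $w$ follows an itinerary that ends in a foot different from its starting foot. Conversely, a nonempty \emph{locally reduced} itinerary forces a point to move. Since both properties depend only on $D$, we get $w=1$ in $\langle B\rangle$ if and only if $w=1$ in $\langle B'\rangle$. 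Hence the map $b_i\mapsto b_i'$ induces a well-defined isomorphism, because the free group surjects onto both groups with the same kernel.

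For the key lemma I would proceed as follows. Given $w$ acting nontrivially, choose a point $x$ moved by $w$ and push it into a foot, using that the gaps are fundamental domains and conjugating by powers of the relevant bump. Then argue by induction on word length that the transitions between feet are forced: a point in $R_i$ stays in $R_i$ under $b_i$, and a point in a foot disjoint from $\spn(i)$ is fixed. The case of a point lying in some foot $X$ nested inside $\spn(i)$ but different from $L_i$ and $R_i$ is handled by the ping-pong containment $b_i(X)\subseteq R_i$.

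The main obstacle is points that lie in gaps rather than feet, and words whose letters cancel dynamically without cancelling freely; this is where the notion of reduced itinerary must be designed carefully. I would first try to restrict to the set of points in the union of feet whose entire itinerary stays within feet, and show that this set is dense enough in each support to detect nontriviality. Failing that, I would follow \cite{BBKMZ} and compare both groups with a canonical fast realization by piecewise-linear maps, proving that every fast realization of $D$ is semiconjugate to the canonical one via a monotone map on the feet.
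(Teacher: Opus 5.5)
The paper does not prove this statement; it quotes it from \cite{BBKMZ} and uses it as a black box. Judged on its own, your proposal has a genuine gap, and the gap is exactly the content of the theorem.

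First, your key lemma is false as stated, and its two halves concern different predicates. Take the nested fast pair $\{a,b\}$ with diagram $L_a<L_b<R_b<R_a$. Both feet of $b$ lie in the gap $[m_a,a(m_a))$, so $\supp(b)\subseteq[m_a,a(m_a))$. Hence $g=aba^{-1}\neq1$ has $\supp(g)=a(\supp(b))\subseteq[a(m_a),a^2(m_a))\subseteq R_a$. So $g$ fixes every other foot pointwise and maps $R_a$ into itself, and no point of any foot ends in a different foot. Thus ``some point of a foot changes foot'' does not detect nontriviality. To conclude that the two kernels coincide, you need one predicate $P_D(w)$, depending only on $D$, with $w\neq1\iff P_D(w)$ in \emph{every} realization. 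Your converse uses a different predicate, namely a non-empty locally reduced itinerary. You would have to prove both that nontriviality implies it and that it is decided by $D$.

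Second, and this is the real obstruction, the itinerary is not determined by $D$. Your case analysis omits exactly two cases: $b_i$ applied to a point of $L_i$, and $b_i^{-1}$ applied to a point of $R_i$. We have $b_i(L_i)=L_i\cup[m_i,b_i(m_i))$, and $L_i=\bigsqcup_{k\geq1}b_i^{-k}([m_i,b_i(m_i)))$ consists of infinitely many copies of the gap, each carrying copies of the feet lying in it. So the foot reached from a point of a source depends on the point, not on its foot.

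The ping-pong property does confine the trouble to the start: as long as the current point lies in the destination of the last uncancelled letter, every step is forced. But choosing the starting foot cannot avoid it. Every point moved by $g$ above lies in $R_a$, the source of $a^{-1}$, and the word $aba^{-1}$ applies $a^{-1}$ first. Since $a^{-1}(R_a)=R_a\cup[m_a,a(m_a))$, such a point lands in $R_a$, $L_b$, $R_b$ or the gap of $b$, depending on the point. Restricting to points whose itinerary stays in feet does not help, for the same reason. Consequently there is no well-defined partial action of the free group on the finite set of feet, and you cannot transport an itinerary from $B$ to $B'$ by matching feet.

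What is missing is an argument that nontriviality is nevertheless decided by $D$. This needs a finer, recursive encoding of positions inside sources. It also needs care with regions between consecutive feet, which may be non-empty in one realization and empty in another (e.g.\ in a canonical configuration). That analysis is the substance of \cite{BBKMZ}, and your proposal leaves it open.

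Your fallback does not close the gap either. A monotone semiconjugacy $h$ with $h\circ b_i=c_i\circ h$ yields only an epimorphism of the generated groups. Injectivity would still require that no nontrivial element preserves every fibre of $h$, and constructing $h$ needs the same control of orbits inside sources.
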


\begin{definition}[Isolated bumps; {\cite[\S3]{BBKMZ}}]\label{def:isolated}
Let $B$ be a fast set.  A bump $b\in B$ is \emph{isolated} if
$\supp(b)$ contains no endpoint of the support of any bump in $B$.
Equivalently, no foot of another bump lies strictly between the two feet
of $b$ in the dynamical diagram.
\end{definition}

\begin{definition}[Canonical configuration; {\cite[\S3.1]{BS}}]
\label{def:canonical-config}
A marked fast set $B$ is in \emph{canonical configuration} if its feet,
together with the gap $[m_b,b(m_b))$ of each isolated bump $b$, cover
$\supp(B)$ up to finitely many points.
\end{definition}

\begin{remark}[Existence of canonical configurations]
\label{rem:canonical-exists}
Every dynamical diagram has a fast realization in canonical
configuration.  Indeed, add inside each isolated bump the two auxiliary
crossing bumps from \cite[Proposition~3.2]{BBKMZ}, and apply the interval
realization in the proof of \cite[Theorem~7.1]{BBKMZ} to the enlarged
diagram.  After discarding the auxiliary bumps, the intervals formerly
occupied by their feet form the single fundamental-domain gap
$[m_b,b(m_b))$ of the corresponding isolated bump $b$, while all
remaining partition intervals are feet.  Thus these intervals cover the
support up to finitely many endpoints.
\end{remark}

\begin{definition}[Crossing graph, irreducibility]\label{def:crossing}
The \emph{crossing graph} $\GB(D)$ of a dynamical diagram $D$ has the
bumps as vertices and the crossing pairs as edges.  The diagram $D$ is
\emph{irreducible} if $\GB(D)$ is connected.  For a fast set $B$ we write
$\GB(B)$ for the crossing graph of its dynamical diagram and call $B$
\emph{irreducible} if $\GB(B)$ is connected.
\end{definition}

The configurations in \cref{fig:dyndiag} illustrate crossing, nesting,
and an isolated bump.

\begin{figure}[ht]
\centering
\begin{tikzpicture}[scale=1.05]
  \begin{scope}
    \draw[->] (-0.3,0) -- (6.1,0);
    \draw[line width=2.4pt,blue!65!black] (0.3,0) -- (1.3,0);
    \node[blue!65!black,below] at (0.8,-0.07) {$L_x$};
    \draw[line width=2.4pt,blue!65!black] (3.1,0) -- (4.1,0);
    \node[blue!65!black,below] at (3.6,-0.07) {$R_x$};
    \draw[thick,blue!65!black] (0.3,0) .. controls (1.4,1.7) and (3.0,1.7) .. (4.1,0);
    \node[blue!65!black] at (1.55,1.15) {$x$};
    \draw[line width=2.4pt,red!70!black] (1.7,0) -- (2.7,0);
    \node[red!70!black,below] at (2.2,-0.07) {$L_y$};
    \draw[line width=2.4pt,red!70!black] (4.5,0) -- (5.5,0);
    \node[red!70!black,below] at (5.0,-0.07) {$R_y$};
    \draw[thick,red!70!black] (1.7,0) .. controls (2.8,1.7) and (4.4,1.7) .. (5.5,0);
    \node[red!70!black] at (4.25,1.15) {$y$};
    \node at (2.9,-0.85) {\small a fast set of two crossing bumps};
  \end{scope}
  \begin{scope}[xshift=7.6cm]
    \draw[->] (-0.3,0) -- (6.1,0);
    \draw[line width=2.4pt,blue!65!black] (0,0) -- (0.75,0);
    \node[blue!65!black,below] at (0.375,-0.07) {$L_1$};
    \draw[line width=2.4pt,blue!65!black] (2,0) -- (2.75,0);
    \node[blue!65!black,below] at (2.375,-0.07) {$R_1$};
    \draw[thick,blue!65!black] (0,0) .. controls (0.7,1.2) and (2.05,1.2) .. (2.75,0);
    \draw[line width=2.4pt,red!70!black] (1,0) -- (1.75,0);
    \node[red!70!black,below] at (1.375,-0.07) {$L_2$};
    \draw[line width=2.4pt,red!70!black] (5,0) -- (5.75,0);
    \node[red!70!black,below] at (5.375,-0.07) {$R_2$};
    \draw[thick,red!70!black] (1,0) .. controls (2.2,1.9) and (4.55,1.9) .. (5.75,0);
    \draw[line width=2.4pt,green!45!black] (3,0) -- (3.75,0);
    \node[green!45!black,below] at (3.375,-0.07) {$L_3$};
    \draw[line width=2.4pt,green!45!black] (4,0) -- (4.75,0);
    \node[green!45!black,below] at (4.375,-0.07) {$R_3$};
    \draw[thick,green!45!black] (3,0) .. controls (3.55,0.95) and (4.2,0.95) .. (4.75,0);
    \node at (2.9,-0.85) {\small a dynamical diagram of three bumps};
  \end{scope}
\end{tikzpicture}
\caption{Feet and bump arcs.  Each bump and its two feet share a color,
and each bump is drawn as an arc joining the left endpoint of its left
foot to the right endpoint of its right foot, so that the arc spans the
support of the bump.  Left: a fast set of two crossing bumps,
$L_x<L_y<R_x<R_y$; the group they generate is Thompson's group $F$
\cite{BBKMZ}.  Right: a dynamical diagram of three bumps in which bumps
$1$ and $2$ cross, bump $3$ is nested in bump $2$, and bumps $1$ and $3$
have disjoint spans.  This diagram is not irreducible (bump $3$ is
isolated), so the groups generated by its fast realizations do not
belong to the class $\mathcal C_3$ of \cref{def:Cn}.}
\label{fig:dyndiag}
\end{figure}

\begin{definition}[The class $\mathcal{C}_n$]\label{def:Cn}
For $n\geq 2$, $\mathcal{C}_n$ denotes the class of groups
$\langle B\rangle$ generated by irreducible geometrically fast sets $B$
of $n$ positive bumps.
\end{definition}

Finally we recall, in an informal form, the representation theorem of Belk
and Stott; the precise statement, in the language of directed
$2$-complexes, is \cref{thm:representation} below.

\begin{theorem}[{\cite[Theorem~1.2]{BS}}]\label{thm:BS-informal}
Let $B$ be a geometrically fast set of positive bumps.  Then
$\langle B\rangle$ is isomorphic to the diagram group, over an explicit
finite semigroup presentation read off from the dynamical diagram of $B$,
with an explicit base word.
\end{theorem}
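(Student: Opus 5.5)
The statement just given is \cref{thm:BS-informal}, the representation theorem of Belk and Stott, which the paper cites from \cite{BS}. The plan below is how I would reprove it directly. The idea is to build the semigroup presentation from a canonical configuration of $B$ and then compare the action of $\langle B\rangle$ with the action of the diagram group on an associated space of infinite paths.

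By \cref{thm:diagram-isomorphism}, the isomorphism type of $\langle B\rangle$, with its marked generators, depends only on the dynamical diagram $D$. By \cref{rem:canonical-exists}, I may therefore assume $B$ is in canonical configuration. Then $\supp(B)$ is cut, up to finitely many points, into finitely many intervals: the feet, together with the gaps of the isolated bumps. I will introduce one alphabet letter for each such interval; the base word is the left-to-right sequence of these letters. Relations come from the dynamics. For each bump $b$, the map $b$ sends $L_b\cup[m_b,b(m_b))$ onto $[m_b,b(m_b))\cup R_b$ (when $b$ is not isolated the gap is itself a union of feet of other bumps), and $b^{-1}$ does the reverse. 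Pulling back the partition of $\supp(B)$ through $b$ subdivides each foot of $b$ into images of the other intervals. This yields, for each foot $L_b$, a relation $L_b = w$, where $w$ is the word recording how $b$ refines $L_b$ into pieces that $b$ maps onto letters of the gap or other feet, and symmetrically for $R_b$. The presentation $\Pres_D$ consists of these $2n$ relations (plus relations for isolated gaps), and all of it is read off from $D$.

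Next I will define a homomorphism from $\langle B\rangle$ to the diagram group $\DG(\Pres_D,w_0)$. Each generator $b$ is sent to the diagram obtained by expanding the base word along the relations of the feet of $b$ on one side and contracting on the other. Geometrically, this diagram encodes the piecewise "affine in the combinatorial sense" map that $b$ induces between the two subdivisions. To make this well defined I will use the standard ping-pong/tree-of-refinements picture. Every point of $\supp(B)$ outside a countable set has an infinite itinerary: a path in the space of iterated refinements, which is the ends space of the diagram-group action on infinite paths of the associated directed $2$-complex. Both $\langle B\rangle$ and the diagram group act faithfully by "prefix replacement" on these itineraries. The map $b\mapsto$ diagram then intertwines the two actions, so the assignment extends to an injective homomorphism, with faithfulness on the itinerary space giving injectivity.

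The main obstacle is surjectivity: every reduced diagram over $\Pres_D$ must come from a word in $B$. My plan is induction on the number of cells. Take a reduced diagram and look at a cell adjacent to the top path. Such a cell is an expansion $x\to w$ of a foot letter $x$ of some bump $b$. Composing with $b^{\pm1}$, suitably conjugated into the position of that letter, should cancel that cell and decrease complexity. The difficulty is that conjugating generators to act locally at an arbitrary position needs the fast ping-pong dynamics from \cite{BBKMZ}, namely that foot intervals are moved into each other in a controlled way. I expect this local-generation lemma to require the most care, especially at isolated bumps and at the boundary letters of the base word.
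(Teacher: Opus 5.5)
The paper does not prove this statement. It quotes it from Belk--Stott and uses it in the precise form of \cref{thm:BS}: the rules are $\lambda_b\colon A_{i(b)}\to A_{i(b)}\Gamma_b$ and $\rho_b\colon A_{j(b)}\to\Gamma_bA_{j(b)}$ of \cref{def:presB}, and the base word is the canonical partition. So your proposal has to stand on its own.

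Its first half is a reasonable route to an \emph{embedding} of $\langle B\rangle$ into the diagram group. In a canonical configuration, each node of the iterated-refinement tree is an interval $J$ with a homeomorphism $J\to A_m$ given by a word in $B$. Every $g\in\langle B\rangle$ carries the nodes of some finite expansion onto the nodes of another, compatibly with these homeomorphisms. This yields both the forest pair of $g$ and injectivity. You assert faithfulness rather than prove it, but it can be checked this way.

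Some details are wrong, however:
\begin{enumerate}
\item The bump $b$ maps $L_b$ onto $L_b\cup[m_b,b(m_b))$, and maps $[m_b,b(m_b))\cup R_b$ onto $R_b$. It does not map $L_b\cup[m_b,b(m_b))$ onto $[m_b,b(m_b))\cup R_b$.
\item The gap letters of isolated bumps must carry \emph{no} rules. They are dummy letters, and adding rules would in general change the group.
\item Points lying in nodes labelled by such gap letters have finite itineraries. They form a union of open intervals, not a countable set, so these leaves must be included in the space on which you compare the two actions.
\end{enumerate}

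The genuine gap is surjectivity, which is the actual content of the Belk--Stott theorem, and the induction you sketch does not work. A nontrivial reduced spherical diagram over this tree-like presentation always has both positive and negative cells. So does the generator diagram $\Delta_b$: it has an expansion of $L_b$ and a contraction of $\Gamma_bR_b$ to $R_b$.

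Suppose a reduced $\Delta$ has a top-level expansion of $L_b$. In $\Delta_b^{-1}\Delta$ that cell cancels, but an expansion of $R_b$ appears on top in its place. The subdiagrams below the children of $L_b$ move up to $L_b$ and the gap letters, and those below the gap letters and $R_b$ move down under the new cell. The number of cells is unchanged unless further dipoles happen to appear. Conjugating $\Delta_b$ to act at another position does not change this trade of one cell for another. Also, a cell touching the top path need not be an expansion: in $\Delta_b$ itself, the contraction onto $R_b$ touches it. So no decreasing complexity has been exhibited.

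What is needed is a real generation argument. One example would be a normal form for expansion forests, together with a lemma that $H=\langle\Delta_b\mid b\in B\rangle$ contains the local rotation at every run of consecutive leaves labelled $L_b,\Gamma_b,R_b$ in every expansion forest. Concretely, that means an element of $H$ carrying $\overline{\supp(b)}$ onto the union of those nodes, compatibly with their identifications with $L_b$, $\Gamma_b$, $R_b$. This is exactly where the fast dynamics must enter, and your proposal leaves it as an expectation. As written, it proves at most that $\langle B\rangle$ embeds in the diagram group.
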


\subsection{Directed \texorpdfstring{$2$}{2}-complexes and diagram groups}
\label{subsec:dg}

Diagram groups were introduced by Meakin and Sapir; see the historical
account in \cite[Introduction]{GS06}.  They were first studied by
Kilibarda \cite{Kilibarda}, and the theory over semigroup presentations
was developed by Guba and Sapir in \cite{GS97}.  We use the framework of
directed $2$-complexes from \cite{GS06}.  This framework produces no new
groups---the diagram groups of directed $2$-complexes are exactly those
of semigroup presentations \cite[Theorem~4.3]{GS06}---but its vertex
structure is useful throughout the paper.

\begin{definition}[Directed $2$-complex; {\cite[Definition~2.1]{GS06}}]
\label{def:2complex}
A \emph{directed $2$-complex} $K$ consists of the following data.
\begin{enumerate}
\item A directed graph $K^{(1)}$, called the \emph{$1$-skeleton}, with
vertex set $V$, edge set $E$, and initial and terminal maps
\[
\io,\ta\colon E\longrightarrow V.
\]
\item A set $F^+$ of \emph{positive $2$-cells}.  Each $f\in F^+$ has
non-empty directed $1$-paths
\[
\toppath f,\qquad \botpath f
\]
in $K^{(1)}$ with the same initial and terminal vertices.  They are the
\emph{top} and \emph{bottom paths} of $f$; their common initial and
terminal vertices are denoted $\io(f)$ and $\ta(f)$.
\item For every $f\in F^+$ a formal inverse cell $f^{-1}$, with
\[
\toppath{f^{-1}}=\botpath f,\qquad
\botpath{f^{-1}}=\toppath f,
\qquad (f^{-1})^{-1}=f.
\]
\end{enumerate}
We write $F^- =\{f^{-1}\mid f\in F^+\}$ and $F=F^+\sqcup F^-$.  In
specifying a complex we normally list only its positive cells, writing
$f\colon u\to v$, or simply $u=v$ when the cell name is clear, for a
positive cell with top path $u$ and bottom path $v$.

Directed edge-paths in $K^{(1)}$, including the empty path at each
vertex, are called \emph{$1$-paths}; since all paths are directed, we
also call them \emph{positive paths}.  Their initial and terminal
vertices are denoted $\io(p)$ and $\ta(p)$.
\end{definition}

\begin{remark}[Words and $1$-paths]\label{rem:readable}
A word in the alphabet $E$ is a $1$-path if and only if every two
consecutive letters $e,e'$ satisfy $\ta(e)=\io(e')$.  Thus every subword
of a $1$-path is a $1$-path.  Moreover, if $q=a w b$ is a $1$-path and
$w'$ is a $1$-path with the same initial and terminal vertices as $w$,
then $a w'b$ is a $1$-path with the same endpoints as $q$.
\end{remark}

\begin{definition}[Diagrams; {\cite[Definition~2.4]{GS06}}]
\label{def:diagram}
A \emph{diagram} $\Delta$ over $K$ is a finite connected plane directed
$2$-complex, with edges labelled by edges of $K$ and bounded faces
labelled by cells of $K$, satisfying the following conditions.
\begin{enumerate}
\item It has a unique source $\io(\Delta)$ and a unique sink
$\ta(\Delta)$, and every directed path in $\Delta$ is simple.
\item The boundary of the unbounded face is the union of two directed
paths from $\io(\Delta)$ to $\ta(\Delta)$, the \emph{top path}
$\toppath\Delta$ and the \emph{bottom path} $\botpath\Delta$.
\item The boundary of every bounded face $\pi$ is likewise the union of
an upper and a lower directed path from $\io(\pi)$ to $\ta(\pi)$.  If
$\pi$ is labelled by $f$, then the labels of these paths are
$\toppath f$ and $\botpath f$, respectively.  Equivalently, traversing
the boundary clockwise follows the top path forward and the bottom path
backward.
\end{enumerate}
Thus a diagram lies between its top and bottom paths and is tiled by
labelled cells.  If the labels of $\toppath\Delta$ and $\botpath\Delta$
are the $1$-paths $u$ and $v$, then $\Delta$ is a \emph{$(u,v)$-diagram};
if $u=v$, it is \emph{spherical} with \emph{base path} $u$.  Diagrams are
identified up to label-preserving isotopy of the plane.  The
\emph{trivial diagram} $\varepsilon(p)$ is the single path labelled $p$,
a $(p,p)$-diagram with no cells.

If $\Delta_1$ is a $(u,v)$-diagram and $\Delta_2$ is a $(v,w)$-diagram,
their \emph{concatenation} $\Delta_1\circ\Delta_2$ is obtained by
identifying $\botpath{\Delta_1}$ with $\toppath{\Delta_2}$.

Two adjacent faces form a \emph{dipole} if the upper face is labelled by
a cell $f$, the lower face is labelled by $f^{-1}$, and the bottom path
of the upper face is the top path of the lower face.  Cancelling the
dipole removes both faces and identifies the top path of the upper face
with the bottom path of the lower face.  Two diagrams are
\emph{equivalent} if they are related by finitely many insertions and
cancellations of dipoles, and a diagram is \emph{reduced} if it contains
no dipole.  By Kilibarda's fundamental lemma
\cite[Theorem~3.17]{GS97}; see also \cite[Theorem~2.6]{GS06}, every diagram is equivalent to a unique
reduced diagram.
\end{definition}

\begin{definition}[Atomic diagrams]\label{def:atomic-diagram}
An \emph{atomic diagram} is a diagram with exactly one cell.  More
explicitly, let $f\in F$ and let $a,b$ be $1$-paths such that
$\ta(a)=\io(f)$ and $\io(b)=\ta(f)$.  The atomic diagram determined by
$(a,f,b)$ has top and bottom labels
\[
a\,\toppath f\,b
\qquad\text{and}\qquad
a\,\botpath f\,b.
\]
Its inverse is the atomic diagram determined by $(a,f^{-1},b)$.
\end{definition}

Note that every non-trivial diagram is a finite concatenation of atomic
diagrams.

\begin{definition}[Homotopy]\label{def:homotopy}
Two $1$-paths $u$ and $v$ are \emph{homotopic} if there exists a
$(u,v)$-diagram over $K$.  Homotopic paths have the same endpoints.
\end{definition}

\begin{definition}[The diagram groupoid and diagram groups;
{\cite[\S2]{GS06}}]\label{def:dg}
The \emph{diagram groupoid} $\DG(K)$ consists of all reduced diagrams
over $K$.  If $\Delta_1$ is a reduced $(p,q)$-diagram and $\Delta_2$ is
a reduced $(q,r)$-diagram, their product is the reduced form of
$\Delta_1\circ\Delta_2$; otherwise their product is undefined.  The
identity associated with a $1$-path $p$ is $\varepsilon(p)$, and the
inverse of a diagram is its reflection across a horizontal line.

For a non-empty $1$-path $p$, the reduced $(p,p)$-diagrams form a group,
the \emph{diagram group over $K$ with base path $p$}, denoted
$\DG(K,p)$.
\end{definition}

Changing the base path within its homotopy class does not change the
diagram group.

\begin{lemma}[{\cite[Corollary~3.5]{GS06}}]\label{lem:homotopic-base}
If $p$ and $q$ are homotopic $1$-paths in a directed $2$-complex $K$,
then $\DG(K,p)\cong\DG(K,q)$.
\end{lemma}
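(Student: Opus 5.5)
The plan is to build an explicit isomorphism $\DG(K,p)\to\DG(K,q)$ by conjugating with a fixed diagram. Since $p$ and $q$ are homotopic, \cref{def:homotopy} supplies a $(p,q)$-diagram $\Gamma$ over $K$. Let $\Gamma_0$ be its reduced form, which exists by the uniqueness of reduced forms in \cref{def:diagram}. Then $\Gamma_0$ is an element of the groupoid $\DG(K)$ from $p$ to $q$, and its inverse $\Gamma_0^{-1}$, the mirror image, is a reduced $(q,p)$-diagram. The products $\Gamma_0^{-1}\Gamma_0$ and $\Gamma_0\Gamma_0^{-1}$ reduce to $\varepsilon(q)$ and $\varepsilon(p)$, since the mirrored cells cancel as dipoles in pairs, starting from the middle seam.

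Define
\[
\phi\colon \DG(K,p)\to\DG(K,q),\qquad \phi(\Delta)=\Gamma_0^{-1}\,\Delta\,\Gamma_0,
\]
where the product is taken in $\DG(K)$: concatenate and then reduce. This is well defined, because the bottom path of $\Gamma_0^{-1}$ is $p$, the top path of $\Delta$ is $p$, and the bottom path of $\Delta$ is $p$, which is the top path of $\Gamma_0$. The result is a reduced $(q,q)$-diagram.

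To see that $\phi$ is a homomorphism, I use that multiplication in $\DG(K)$ is associative. This follows because concatenation is associative, and the reduced form depends only on the dipole-equivalence class, which is preserved by concatenation. Associativity then gives
\[
\phi(\Delta)\phi(\Delta')=\Gamma_0^{-1}\Delta\,\Gamma_0\Gamma_0^{-1}\,\Delta'\Gamma_0=\Gamma_0^{-1}\Delta\Delta'\Gamma_0 .
\]
The map $\Delta'\mapsto\Gamma_0\Delta'\Gamma_0^{-1}$ is a two-sided inverse of $\phi$, by the same cancellation. Hence $\phi$ is an isomorphism.

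The only step needing care is the groupoid structure itself. Specifically, two facts must hold: reduction commutes with concatenation up to equivalence, so that the reduced form of $\Delta_1\circ\Delta_2$ depends only on the reduced forms of $\Delta_1$ and $\Delta_2$; and $\Gamma\circ\Gamma^{-1}$ is equivalent to a trivial diagram. Both follow from Kilibarda's lemma on unique reduced forms, as recorded in \cref{def:dg}. The second is a direct induction on the number of cells of $\Gamma$, cancelling the mirror-image pair along the common seam one dipole at a time.
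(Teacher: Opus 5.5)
Your proposal is correct and follows the paper's own argument: conjugation by a reduced $(p,q)$-diagram $\Gamma$, via $\Delta\mapsto\Gamma^{-1}\Delta\Gamma$, gives the isomorphism from $\DG(K,p)$ to $\DG(K,q)$. You additionally spell out the groupoid facts the paper uses implicitly, namely that reduced products are well defined and associative and that $\Gamma\circ\Gamma^{-1}$ reduces to a trivial diagram.
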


\begin{proof}
If $\Gamma$ is a reduced $(p,q)$-diagram, conjugation by $\Gamma$ gives
an isomorphism
\[
\Delta\longmapsto \Gamma^{-1}\Delta\Gamma
\]
from the group with base $p$ to the group with base $q$.
\end{proof}

\begin{remark}[Semigroup presentations]\label{rem:presentation}
In diagram-group theory a semigroup presentation may be regarded as an
indexed string rewriting system
\[
\Pres=\langle X\mid u_i\to v_i\ (i\in I)\rangle .
\]
Different indices may specify the same pair of words, and a rule
$u_i\to u_i$ is allowed.  Such a system determines a one-vertex directed
$2$-complex with one edge for each letter of $X$ and one positive cell
$f_i$ with $\toppath{f_i}=u_i$ and $\botpath{f_i}=v_i$.  Conversely,
every one-vertex directed $2$-complex with its chosen positive cells
determines such an indexed rewriting system.  Under this correspondence
the diagrams and diagram groupoids coincide, and we write
$\DG(\Pres,p)$ for the corresponding diagram group.
\end{remark}

Identifying vertices may create new readable paths, but it does not
change the diagram groups based at paths that were already readable.

\begin{lemma}[Identifying vertices; cf.\ {\cite[Remark~2.3]{GS06}}]
\label{lem:vertices}
Let $\overline K$ be obtained from a directed $2$-complex $K$ by
identifying vertices, while retaining the same edges, positive cells,
and top and bottom paths.  For every non-empty $1$-path $p$ of $K$, the
natural map induces an isomorphism
\[
\DG(K,p)\cong\DG(\overline K,p).
\]
In particular, let $\Pres_K$ denote the indexed rewriting system obtained
by identifying all vertices of $K$.  Then
\[
\DG(K,p)\cong\DG(\Pres_K,p).
\]
\end{lemma}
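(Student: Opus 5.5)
The plan is to build an explicit isomorphism of groupoids. The identity on labels sends diagrams over $K$ to diagrams over $\overline K$, and I will show that it restricts to a bijection between reduced $(p,p)$-diagrams that respects products.

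First, the map on diagrams. A diagram $\Delta$ over $K$ is a plane complex whose edges carry labels in $E$ and whose faces carry labels in $F$. The defining conditions of \cref{def:diagram} say the following: the upper and lower boundary paths of each face read $\toppath f$ and $\botpath f$, and the labels along any path of $\Delta$ form a $1$-path of $K$. The quotient map $K\to\overline K$ keeps $E$, $F$ and the top and bottom paths of cells unchanged, and it sends $1$-paths of $K$ to $1$-paths of $\overline K$. So the same labelled plane complex is a diagram $\Phi(\Delta)$ over $\overline K$, with the same top and bottom labels. The definitions of dipoles and of concatenation refer only to the labels, so $\Phi$ sends dipoles to dipoles and reduced diagrams to reduced diagrams. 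It also commutes with concatenation. Hence $\Phi$ induces a homomorphism $\DG(K,p)\to\DG(\overline K,p)$, and it is injective because it is injective on the underlying labelled complexes.

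The main step is surjectivity: every diagram $\Delta$ over $\overline K$ with top and bottom label $p$, where $p$ is a $1$-path of $K$, must already be a diagram over $K$. I would check this by propagating $K$-vertices through $\Delta$. Assign to each vertex $v$ of $\Delta$ a vertex $\nu(v)\in V(K)$ as follows. For every edge $e$ of $\Delta$ with label $x\in E$, the intended values are $\nu(\io e)=\io_K(x)$ and $\nu(\ta e)=\ta_K(x)$. I must show these values are consistent at every vertex of $\Delta$.

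Every vertex of $\Delta$ lies on a directed path from $\io(\Delta)$ to $\ta(\Delta)$. Given any vertex $v$, consider the edges entering $v$ and the edges leaving $v$. In a diagram, the edges around a vertex split into consecutive incoming and outgoing edges, and consecutive edges of each kind are separated by faces. Take two consecutive incoming edges $e,e'$ at $v$. They are the last edges of the lower and upper paths of some face $\pi$, or vice versa, ending at $\ta(\pi)$. Both paths read $\toppath f$ and $\botpath f$ for the label $f$ of $\pi$, and these are $1$-paths of $K$ with common terminal vertex $\ta_K(f)$. So $\ta_K$ of the labels of $e$ and $e'$ agree. The same argument handles consecutive outgoing edges, using $\io(\pi)$.

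It remains to compare an incoming edge with an outgoing edge at $v$. Take the leftmost (topmost) of each. Either they are consecutive letters along the top path of $\Delta$, or they are consecutive edges inside the boundary path of a face. In either case they are consecutive letters of a $1$-path of $K$: this is $p$ for the top path, and $\toppath f$ or $\botpath f$ for a face. Therefore $\ta_K$ of the incoming label equals $\io_K$ of the outgoing label. The source and sink are covered by the top path $p$.

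Thus $\nu$ is well defined, and every path of $\Delta$ reads a $1$-path of $K$. So $\Delta$ is a diagram over $K$, and $\Phi$ is bijective.

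The "in particular" statement follows by identifying all vertices of $K$ and then applying \cref{rem:presentation}.

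I expect the main obstacle to be this local consistency argument at vertices. It needs a careful description of the cyclic order of edges around a vertex of a diagram, namely that incoming edges are consecutive, outgoing edges are consecutive, and each adjacent pair is separated by a face. I would take that description from the planar structure in \cref{def:diagram}, arguing from the fact that every directed path is simple and from the existence of the unique source and sink.
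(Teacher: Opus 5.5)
Your proof is correct, but it reaches surjectivity by a different route from the paper.

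\emph{The paper's route.} It writes a spherical diagram $\Delta$ over $\overline K$ as a concatenation of atomic diagrams and then argues purely at the level of words. Starting from the $K$-path $p$, each atomic factor replaces one side of a cell by the other at an occurrence. By \cref{rem:readable} the new intermediate path is again a $K$-path, so every factor, and hence $\Delta$, is a diagram over $K$.

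\emph{Your route.} You construct the vertex lift $\nu$ directly by checking consistency around each vertex $v$ of $\Delta$:
\begin{enumerate}
\item consecutive incoming edges are joined by a face with $\ta(\pi)=v$;
\item consecutive outgoing edges are joined by a face with $\io(\pi)=v$;
\item the topmost incoming/outgoing pair is consecutive either in the lower path of the face above $v$ or in the top path $p$.
\end{enumerate}

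\emph{What each buys.} The paper hands all the planar geometry to the standard fact that every diagram factors into atomic diagrams, after which the check is one line. Your argument avoids that decomposition, is local, and exhibits the lift explicitly (it is forced, hence unique). In exchange it needs the rotation structure at a vertex that you identify as the main obstacle.

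That fact is easy to supply. By Euler's formula $E=V+F-1$. Each edge lies on exactly one upper and one lower boundary path, counting the top and bottom of $\Delta$ as the boundary of the outer face, so the boundary lengths sum to $2E$. A face has one ``mixed'' corner (one edge entering $v$, one leaving) at each interior vertex of its two boundary paths. Hence the total number of mixed corners is $2E-2F-2=2(V-2)$. Every vertex other than the source and sink has at least two mixed corners, and those two have none. So each such vertex has exactly two, which is precisely your claim.

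Both arguments in fact use only that the top label is a $K$-path. So both prove the slightly stronger statement for arbitrary $(p,q)$-diagrams over $\overline K$ whose top label $p$ is a $K$-path.
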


\begin{proof}
Every diagram over $K$ is also a diagram over $\overline K$.  Conversely,
let $\Delta$ be a spherical diagram over $\overline K$ with base path
$p$.  Write $\Delta$ as a finite concatenation of atomic diagrams.
Starting with the $K$-path $p$, each
atomic factor replaces one path of a cell by the other at a displayed
occurrence.  By \cref{rem:readable}, the resulting intermediate path is
again a $K$-path.  Inductively every atomic factor, and hence $\Delta$,
is a diagram over $K$.  Dipoles and reduction are the same in the two
complexes, so the diagram groups agree.  The quotient may nevertheless
have additional readable paths and hence additional groups at those new
base paths.
\end{proof}

\subsubsection{The Guba--Sapir moves}

The following moves modify a directed $2$-complex without changing its
diagram groups.  They were established by Guba and Sapir
\cite[Theorem~4.1]{GS06} as directed-$2$-complex analogues of Tietze
transformations. The formulation of the moves here is closer to the formulation in \cite[Theorem~2.1]{BS}, where they are stated for
semigroup presentations.

\begin{convention}[Formal inverse cells]\label{conv:inverse-cells}
Whenever a positive cell is adjoined, deleted, or modified, its formal
inverse is adjoined, deleted, or modified simultaneously.
\end{convention}

\begin{theorem}[Guba--Sapir moves; {\cite[Theorem~4.1]{GS06}}]
\label{thm:GSmoves}
Let $K$ be a directed $2$-complex.
\begin{enumerate}
\item[\textup{(M1)}] \textup{(Adjoining an edge.)}  Given a non-empty
$1$-path $w$, adjoin a fresh edge $x$ with the same endpoints as $w$ and
a positive cell $x\to w$.  The resulting complex $K'$ satisfies
\[
\DG(K,q)\cong\DG(K',q)
\]
for every non-empty $1$-path $q$ of $K$.

\item[\textup{(M1)}$^{-1}$] \textup{(Deleting an edge.)}  Suppose $x$
is an edge such that there is a positive cell $f\colon x\to w$, where
$w$ avoids $x$, and $x$ occurs in no top or bottom path of any other
positive cell.  Delete $x$ and $f$.  If $K'$ is the resulting complex,
then
\[
\DG(K,q)\cong\DG(K',q)
\]
for every non-empty $1$-path $q$ not containing $x$.

\item[\textup{(M2)}] \textup{(Rewriting a cell.)}  Let $f$ and $g$ be
distinct positive cells.  If either path of $g$ contains an occurrence of
either $\toppath f$ or $\botpath f$, replace that occurrence by the other
path of $f$, leaving the other path of $g$ unchanged.  The resulting
complex $K'$ satisfies
\[
\DG(K,q)\cong\DG(K',q)
\]
for every non-empty $1$-path $q$ of $K$.
\end{enumerate}
All three moves leave the vertex set unchanged.
\end{theorem}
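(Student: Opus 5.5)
The plan is to prove each move by building mutually inverse functors between the diagram groupoids $\DG(K)$ and $\DG(K')$ that are the identity on the relevant base paths. The underlying principle: $\DG(K)$ is presented by the atomic diagrams as generators, subject to two kinds of relations. The first is dipole cancellation, $(a,f,b)(a,f^{-1},b)=1$. The second is the "far commutativity" relations expressing that two atomic diagrams acting on disjoint subpaths can be performed in either order; these encode isotopy, and every diagram is a finite concatenation of atomic diagrams. Hence a functor is well defined as soon as it is specified on atomic diagrams and respects these relations. Images will always be chosen to be supported on the corresponding subpath, so far commutativity is automatic; dipole relations will be respected because the image of $(a,f^{-1},b)$ will be defined as the inverse of the image of $(a,f,b)$.

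\textbf{(M1).} Let $K'$ be obtained by adjoining $x$ and $f\colon x\to w$. Every $K$-diagram is a $K'$-diagram, and a reduced $K$-diagram stays reduced, so we get an injective inclusion functor $\iota\colon\DG(K)\to\DG(K')$. For a $1$-path $p$ of $K'$, let $\hat p$ denote the path obtained by substituting $w$ for every occurrence of $x$, and let $\Phi_p$ be the $(p,\hat p)$-diagram that applies $f$ to each occurrence of $x$; in particular $\Phi_p$ is trivial when $p$ avoids $x$. Define $\psi(\Delta)=\Phi_u^{-1}\Delta\Phi_v$ for a $(u,v)$-diagram $\Delta$; this is clearly a functor on $\DG(K')$. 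We check on atomic diagrams $(a,g,b)$ that the image is a $K$-diagram:
\begin{itemize}
\item If $g\neq f^{\pm1}$, then $g$ does not involve the fresh edge $x$, and the image is the atomic $K$-diagram $(\hat a,g,\hat b)$.
\item If $g=f^{\pm1}$, the image is trivial, since the $f$ in $\Delta$ cancels against the corresponding cell of $\Phi$.
\end{itemize}
Therefore $\psi$ lands in $\iota(\DG(K))$. Finally, $\psi\iota=\mathrm{id}$ on diagrams whose boundary paths avoid $x$, and at a base path $q$ of $K$ we have $\Phi_q=\varepsilon(q)$, so $\iota\psi=\mathrm{id}$ on $\DG(K',q)$. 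This gives $\DG(K,q)\cong\DG(K',q)$.

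\textbf{(M1)$^{-1}$ and (M2).} Move (M1)$^{-1}$ is exactly the inverse of (M1): under its hypotheses, $K$ is obtained from $K'$ by an (M1) move, and $q$ is a path of $K'$. For (M2), say $g\colon u\to a\toppath f b$ is replaced by $g'\colon u\to a\botpath f b$; the other cases are symmetric or obtained by inverting. Define $\theta\colon\DG(K)\to\DG(K')$ on atomic diagrams by
\[
(c,g,d)\mapsto (c,g',d)\circ(ca,f^{-1},bd),
\]
and as the identity on atomic diagrams of all other cells. Define $\theta'\colon\DG(K')\to\DG(K)$ by
\[
(c,g',d)\mapsto(c,g,d)\circ(ca,f,bd).
\]
Both are functors that are the identity on objects (paths). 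Moreover $\theta'\theta$ sends $(c,g,d)$ to $(c,g,d)(ca,f,bd)(ca,f^{-1},bd)$, which equals $(c,g,d)$ in the groupoid, and similarly $\theta\theta'=\mathrm{id}$. Hence the diagram groups at every base path agree.

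The main obstacle is the foundational claim used throughout: that $\DG(K)$ is exactly the groupoid presented by atomic diagrams modulo dipole and far-commutativity relations. If this is not taken from \cite{GS06}, I would prove it directly from the definition of equivalence (dipole insertion and cancellation) together with isotopy invariance, by showing that any two decompositions of a diagram into atomic factors differ by far-commutativity moves. One reorders the cells by a height function, and two orderings differ by swapping cells that are not adjacent along a directed path. The remaining verifications are routine: the images in the two cases above are genuine $K$-diagrams, and $\Phi$ is well defined independently of the order in which the occurrences of $x$ are expanded, which again follows from far commutativity.
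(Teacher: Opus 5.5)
Your proposal is correct, but it takes a different route from the paper. The paper gives no combinatorial argument. It quotes (M1) and (M1)$^{-1}$ as \cite[Theorem~4.1(1)]{GS06} read in the two directions. It obtains (M2) from \cite[Theorem~4.1(2)]{GS06} by splitting the positive cells into $K_1=\{g\}$ and $K_2$ (all the others), and observing that the changed side of $g$ is homotopic in $K_2$, by one application of $f$, to its replacement.

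You instead build explicit isomorphisms of diagram groupoids:
\begin{itemize}
\item for (M1), the inclusion $\iota$, with surjectivity at a base $q$ avoiding $x$ proved by conjugating with the expansion diagrams $\Phi_p$ (the same homotopy the paper later uses in the proof of \cref{lem:transport});
\item for (M1)$^{-1}$, reading (M1) backwards, as the paper does;
\item for (M2), the substitution $g\mapsto g'\circ f^{-1}$ with inverse $g'\mapsto g\circ f$.
\end{itemize}
The paper's route buys brevity, and it inherits the more general Guba--Sapir statement allowing any homotopic replacement of a cell's boundary paths. Yours buys a self-contained proof with explicit isomorphisms.

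What you call the main obstacle can be avoided entirely. In (M1) you never use the presentation of $\DG(K)$ by atomic diagrams, since $\psi$ is defined on whole diagrams by conjugation. In (M2) you can likewise define $\theta$ directly on diagrams. Replace every face labelled $g$ by the two-face $(u,a\toppath f b)$-diagram consisting of $g'$ followed by $f^{-1}$, and every face labelled $g^{-1}$ by its mirror image. This commutes with concatenation and reflection. It carries a $g$/$g^{-1}$ dipole to a diagram that reduces by two successive dipole cancellations, first $f^{-1}/f$ and then $g'/g'^{-1}$, and it carries every other dipole to a dipole. Hence it descends to reduced diagrams without invoking the atomic-diagram presentation. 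That presentation is nonetheless true: it is the Guba--Sapir identification of diagram groups (and groupoids) with fundamental groups(oids) of Squier complexes. So your argument as written has no gap.
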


\begin{proof}
Moves (M1) and (M1)$^{-1}$ are \cite[Theorem~4.1(1)]{GS06} read in the
two directions.  For (M2), let $K_1$ consist of the $1$-skeleton of $K$
together with the positive cell $g$, and let $K_2$ consist of the
$1$-skeleton together with all other positive cells.  In $K_2$, the side
of $g$ being changed is homotopic to the path obtained after the
replacement, by one application of the cell $f$; the other side of $g$
is homotopic to itself.  Replacing $g$ by the modified cell therefore
preserves all diagram groups by \cite[Theorem~4.1(2)]{GS06}.
\end{proof}

A finite sequence $\Sigma$ of moves (M1), (M1)$^{-1}$ and (M2)
transforming $K$ into $K^{*}$ is a \emph{GS sequence}; two complexes
connected by such a sequence are \emph{GS-equivalent}.

\begin{definition}[Edge mapping of a GS sequence]\label{def:edge-map}
Let $\Sigma=(\sigma_1,\dots,\sigma_s)$ be a GS sequence from $K$ to
$K^{*}$.  For every edge $e$ of $K$, define a $1$-path
$\phi_\Sigma(e)$ over $K^{*}$ as follows.  Start with $\phi(e)=e$ and
process the moves in order.  Moves (M1) and (M2) leave $\phi$ unchanged.
When (M1)$^{-1}$ deletes an edge $x$ with cell $x\to w$, replace every
occurrence of $x$ in every current value of $\phi$ by $w$.  Extend
$\phi_\Sigma$ to $1$-paths by concatenation.  Each substitution preserves
endpoints, so $\phi_\Sigma(p)$ has the same endpoints as $p$.  The map
depends on the sequence $\Sigma$, not merely on $K$ and $K^{*}$.
\end{definition}

\begin{lemma}[Transport of bases]\label{lem:transport}
Let $\Sigma$ be a GS sequence from $K$ to $K^{*}$ and let $p$ be a
non-empty $1$-path of $K$.  Then
\[
\DG(K,p)\cong\DG\bigl(K^{*},\phi_\Sigma(p)\bigr).
\]
\end{lemma}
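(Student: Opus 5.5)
We argue by induction on the length $s$ of the GS sequence $\Sigma=(\sigma_1,\dots,\sigma_s)$. Write $K=K_0,K_1,\dots,K_s=K^{*}$ for the intermediate complexes and $\Sigma_j=(\sigma_1,\dots,\sigma_j)$. We prove that, for every $j$, $\phi_{\Sigma_j}(p)$ is a non-empty $1$-path of $K_j$ and $\DG(K,p)\cong\DG(K_j,\phi_{\Sigma_j}(p))$. The case $j=0$ is trivial, since $\phi_{\Sigma_0}(p)=p$.

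The first point is to note that $\phi$ can be updated one move at a time. By \cref{def:edge-map}, $\phi_{\Sigma_{j}}$ is obtained from $\phi_{\Sigma_{j-1}}$ by doing nothing if $\sigma_j$ is (M1) or (M2). If $\sigma_j$ is (M1)$^{-1}$ deleting $x$ with cell $x\to w$, we replace each occurrence of $x$ by $w$. Hence $\phi_{\Sigma_j}(p)=\psi_j(\phi_{\Sigma_{j-1}}(p))$, where $\psi_j$ is either the identity or the substitution $x\mapsto w$. Set $q=\phi_{\Sigma_{j-1}}(p)$, a non-empty $1$-path of $K_{j-1}$ by induction.

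If $\sigma_j$ is (M1), then $q$ is still a $1$-path of $K_j$ because no edges are removed. If $\sigma_j$ is (M2), the $1$-skeleton is unchanged. In both cases \cref{thm:GSmoves} gives $\DG(K_{j-1},q)\cong\DG(K_j,q)$.

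If $\sigma_j$ is (M1)$^{-1}$, \cref{thm:GSmoves} only applies to base paths avoiding $x$, so we first change base. Let $q'=\psi_j(q)$. Since $w$ is a non-empty $1$-path with the same endpoints as $x$, \cref{rem:readable} applied repeatedly shows that $q'$ is a non-empty $1$-path of $K_{j-1}$ with the same endpoints as $q$. Moreover, $q'$ avoids $x$ because $w$ avoids $x$. Applying the cell $x\to w$ once at each occurrence of $x$ in $q$ gives a concatenation of atomic diagrams, so $q$ and $q'$ are homotopic. \cref{lem:homotopic-base} then gives $\DG(K_{j-1},q)\cong\DG(K_{j-1},q')$. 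Since $q'$ avoids $x$, it is a $1$-path of $K_j$, and (M1)$^{-1}$ gives $\DG(K_{j-1},q')\cong\DG(K_j,q')$. Composing these isomorphisms completes the inductive step.

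The only delicate point is this (M1)$^{-1}$ case. There we must check that $w$ is non-empty, so that $q'$ is a non-empty path and hence a legitimate base, and that it is the homotopy, rather than the move itself, that carries us across. Non-emptiness holds because (M1) only adjoins cells $x\to w$ with $w$ non-empty. We take the same convention for the cells that (M1)$^{-1}$ is allowed to delete; otherwise the statement of \cref{thm:GSmoves} would not make sense.

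Finally, we must confirm that the iterative substitution in \cref{def:edge-map} agrees with the per-step update $\psi_j$ used above. This holds because substitution commutes with concatenation, and $\phi_\Sigma$ is extended to paths by concatenation.
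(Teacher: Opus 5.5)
Your proof is correct and is essentially the paper's argument. It reduces to a single move, notes that (M1) and (M2) leave the base path and the diagram group unchanged, and for (M1)$^{-1}$ first passes to the homotopic path obtained by substituting $w$ for $x$ (via \cref{lem:homotopic-base}) before deleting $x$. Your extra convention about non-emptiness of $w$ is unnecessary: \cref{def:2complex} already requires both paths of every positive cell to be non-empty.
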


\begin{proof}
It suffices to consider one move.  For (M1) and (M2), the edge mapping is
the identity and the assertion is part of \cref{thm:GSmoves}.  Suppose
(M1)$^{-1}$ deletes $x$ with cell $x\to w$, and let $p'$ be obtained
from $p$ by replacing every occurrence of $x$ by $w$.  Replacing these
occurrences one at a time gives a concatenation of atomic diagrams, one
for each occurrence, from $p$ to $p'$.  Thus $p$ and $p'$ are homotopic, so
$\DG(K,p)\cong\DG(K,p')$ by \cref{lem:homotopic-base}.  Since $p'$ avoids
$x$, (M1)$^{-1}$ gives $\DG(K,p')\cong\DG(K',p')$.
\end{proof}

\subsubsection{Elementary invariants of the moves}

\begin{lemma}[GS invariants]\label{lem:GSinv}
Let $K'$ be obtained from a directed $2$-complex $K$ by one of the moves
\textup{(M1)}, \textup{(M1)}$^{-1}$, or \textup{(M2)}.  With the common
vertex set $V$:
\begin{enumerate}
\item a vertex has an incoming edge in $K$ if and only if it has one in
$K'$, and likewise for outgoing edges;
\item for all $u,v\in V$, there is a $1$-path from $u$ to $v$ in $K$ if
and only if there is one in $K'$.
\end{enumerate}
Hence the moves preserve the strongly connected components, the vertices
with no incoming edge, and the vertices with no outgoing edge.
\end{lemma}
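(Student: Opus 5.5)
The proof is a case check over the three moves. Since every move leaves the vertex set $V$ unchanged (\cref{thm:GSmoves}), it suffices to compare edges and their endpoints. The key observation is that only (M1) and (M1)$^{-1}$ change the edge set, and each time the edge $x$ being added or removed is \emph{parallel to a $1$-path}: it has the same endpoints as a non-empty $1$-path $w$ that exists in both complexes. Positive cells play no role in either statement, since both concern only the $1$-skeleton.

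\emph{(M2).} This move changes only the top or bottom path of a positive cell. The $1$-skeleton, and hence the edges with their endpoints, is the same in $K$ and $K'$, so (1) and (2) hold trivially.

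\emph{(M1).} Here $K'$ is $K$ together with a fresh edge $x$, where $\io(x)=\io(w)$ and $\ta(x)=\ta(w)$ for a non-empty $1$-path $w$ of $K$.
\begin{itemize}
\item[(1)] If $x$ is an incoming edge of a vertex $v$ in $K'$, then $v=\ta(w)$, and the last edge of $w$ is an incoming edge of $v$ already in $K$. Outgoing edges are handled the same way using the first edge of $w$. The reverse implications are immediate because $K\subseteq K'$.
\item[(2)] Every $1$-path of $K$ is a $1$-path of $K'$. Conversely, replacing each occurrence of $x$ in a $1$-path of $K'$ by $w$ gives a $1$-path of $K$ with the same endpoints, by \cref{rem:readable}.
\end{itemize}

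\emph{(M1)$^{-1}$.} This is the previous case with the roles of $K$ and $K'$ exchanged. The edge $x$ deleted from $K$ has a cell $x\to w$ with $w$ avoiding $x$, so $w$ is a non-empty $1$-path of $K'$ parallel to $x$. The same arguments then apply verbatim.

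Finally, the strongly connected components, the vertices with no incoming edge and the vertices with no outgoing edge are all determined by the relation ``there is a $1$-path from $u$ to $v$'' and by the existence of incoming and outgoing edges. These are preserved by (2) and (1), respectively.

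The only point that needs attention is that the path $w$ must be non-empty, so that it contributes a genuine first and last edge in (1). This is exactly what the hypotheses of (M1) and (M1)$^{-1}$ provide.
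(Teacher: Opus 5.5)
Your proof is correct and follows essentially the same route as the paper: (M2) leaves the $1$-skeleton unchanged, (M1)$^{-1}$ is (M1) read in reverse, and for (M1) the new edge $x$ is parallel to the non-empty path $w$. The first and last edges of $w$ supply the needed outgoing and incoming edges, and substituting $w$ for $x$ turns paths of $K'$ into paths of $K$ with the same endpoints; the non-emptiness of $w$ that you flag, which also follows from the definition of a positive cell, is what the paper uses implicitly.
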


\begin{proof}
An (M2) move leaves the $1$-skeleton unchanged, and both assertions are
symmetric in $K$ and $K'$, so it suffices to treat (M1).  Suppose it
adjoins $x$ with a positive cell $x\to w$.  Only $\io(x)$ and $\ta(x)$ gain an
incident edge, but $\io(w)$ already has an outgoing edge and $\ta(w)$ an
incoming edge.  Every path of $K$ remains a path of $K'$, while replacing
each occurrence of $x$ in a path of $K'$ by $w$ gives a path of $K$ with
the same endpoints.
\end{proof}

\subsubsection{Tree-like complexes; active and dummy edges}

\begin{definition}[Tree-like complexes]\label{def:treelike}
A directed $2$-complex is \emph{tree-like} if the top path of every
positive cell is a single edge, its bottom path has length at least two,
and distinct positive cells have distinct top edges.  An edge is
\emph{active} if it is the single-edge top path of a positive cell;
every other edge is \emph{dummy}.  Active and dummy edges may both occur
in bottom paths.
\end{definition}

This is the directed-$2$-complex analogue of the tree-like semigroup
presentations of Farley and Hughes
\cite[Definition~4.1]{FarleyHughes}; presentations of this form were used
earlier in \cite[\S5]{GS99}.

In a tree-like complex no dummy edge is a whole path of a positive cell,
so it cannot be deleted immediately by (M1)$^{-1}$.  A GS sequence may,
however, exchange a dummy edge $d$ for a fresh parallel edge: adjoin
$x\to d$, rewrite the other occurrences of $d$ to $x$, and delete $d$.
The GS moves do not in general preserve tree-likeness.

Every GS sequence used in the proof of \cref{thm:reduction} begins and
ends at a tree-like complex, although its intermediate complexes need not
be tree-like.

\subsection{Thompson's groups \texorpdfstring{$F_n$}{Fn}}\label{subsec:Fn}

For $n\geq2$, $F_n$ denotes the $n$-ary Thompson group: the group of
piecewise-linear orientation-preserving homeomorphisms of $[0,1]$ with
finitely many breakpoints, all in $\Z[1/n]$, and slopes integral powers
of $n$ \cite{Brown,CFP}.  Thus $F_2=F$.  The groups $F_n$ are of type
$F_\infty$ and have free-abelian abelianization of rank $n$
\cite{Brown}; in particular they are pairwise non-isomorphic.  Guba and
Sapir identified them as diagram groups.

\begin{theorem}[{\cite{GS97}}]\label{thm:Fn-diagram}
For every $n\geq2$,
\[
F_n\cong\DG\bigl(\langle x\mid x=x^n\rangle,x\bigr).
\]
\end{theorem}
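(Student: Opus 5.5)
This is the classical theorem of Guba and Sapir, and I would prove it by identifying reduced spherical diagrams over $\Pres=\langle x\mid x=x^n\rangle$ with reduced pairs of $n$-ary trees. I would then match those pairs with the usual tree-pair description of $F_n$. The only positive cell is $f\colon x\to x^n$.

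First I would describe the positive diagrams. A $(x,x^k)$-diagram using only positive cells is the same thing as a finite rooted $n$-ary tree with $k$ leaves: each cell is a caret, and planarity orders the children. Such diagrams exist exactly when $k\equiv 1\pmod{n-1}$. To such a tree $T$ I attach the $n$-adic subdivision $\mathcal I(T)$ of $[0,1]$, in which each caret splits an interval into $n$ equal pieces. Its breakpoints lie in $\Z[1/n]$. To a pair $(T_+,T_-)$ with the same number of leaves I attach the homeomorphism $\theta(T_+,T_-)$ that maps the intervals of $\mathcal I(T_+)$ affinely, in order, onto those of $\mathcal I(T_-)$. This map lies in $F_n$.

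The key structural step is a normal form: every spherical $(x,x)$-diagram $\Delta$ should be equivalent, by dipole insertions and cancellations, to one of the form $\Delta_+\circ\Delta_-^{-1}$ with $\Delta_\pm$ positive. I would argue in the groupoid $\DG(\Pres)$, writing $\Delta$ as a product of atomic diagrams and proving by induction on the number of factors that every $(x,x^k)$-diagram is equivalent to a positive diagram followed by a negative one. Two facts drive the induction. First, atomic factors acting on disjoint positions commute. Second, any two $n$-ary trees have a common expansion, so a negative diagram followed by a positive one can be rewritten as a positive diagram followed by a negative one. To see this, insert the dipole-trivial diagram $\Pi\circ\Pi^{-1}$, where $\Pi$ is the positive expansion of the common leaves, and then cancel the dipoles that appear. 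Cancelling a dipole $f\circ f^{-1}$ inside $\Delta_+\circ\Delta_-^{-1}$ corresponds exactly to removing a common caret at matching leaves, that is, to reducing the tree pair. So the reduced form of $\Delta$ is a reduced tree pair, and by Kilibarda's uniqueness of reduced diagrams (\cref{def:diagram}) it is unique.

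I would then define $\Theta\colon\DG(\Pres,x)\to F_n$ by $\Theta(\Delta_+\circ\Delta_-^{-1})=\theta(T_+,T_-)$. It is well defined because adding a common caret to both trees does not change the affine map. It is a homomorphism because a product is computed by expanding both middle trees to a common refinement, which is exactly how tree pairs compose. It is injective because a reduced tree pair giving the identity must have $T_+=T_-$, and then the diagram reduces to $\varepsilon(x)$. It is surjective by the standard fact from \cite{CFP,Brown}: for $g\in F_n$, subdivide $[0,1]$ $n$-adically finely enough that $g$ is affine on each piece and sends each piece to a standard $n$-adic interval. The pieces and their images are then leaf sets of two $n$-ary trees with equally many leaves.

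I expect the main obstacle to be the normal-form step, specifically showing rigorously that a negative cell sitting directly above a positive cell, the configuration $x^n\to x\to x^n$, can always be pushed past. The naive local swap is not valid, since this configuration is not itself a dipole. I would handle it with the common-refinement insertion described above and not by a local rewriting. If that becomes messy, my fallback is to show directly that $\Theta$, defined cell by cell as a composition of subdivision and merge maps, is invariant under dipoles. Injectivity would then follow from counting reduced forms.
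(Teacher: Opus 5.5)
Your proposal is correct and follows essentially the route the paper indicates after citing Guba--Sapir: reduced spherical diagrams over $\langle x\mid x=x^n\rangle$ are identified with reduced pairs of $n$-ary trees. The only difference is that the paper reads the positive/negative splitting off the longest source-to-sink path of a reduced diagram, whereas you obtain it by rewriting together with uniqueness of reduced forms.

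One correction: the configuration $x^n\to x\to x^n$ that you flag as the main obstacle \emph{is} a dipole, with upper face labelled $f^{-1}$ and lower face labelled $(f^{-1})^{-1}=f$ sharing the edge $x$. So a negative atomic factor followed by a positive one either cancels or acts on disjoint letters and commutes, which makes the normal form immediate. Your common-refinement argument in fact needs exactly these cancellations to remove $T^{-1}\circ T$.
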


The relation with the usual tree-pair model can be read directly from a
reduced spherical diagram over $\langle x\mid x=x^n\rangle$.  Its longest
directed path from source to sink passes through all vertices and
separates the diagram into an upper part whose cells are positive and a
lower part whose cells are negative \cite[\S5]{GS99}.  Replacing each
cell in the upper part by an $n$-caret gives a rooted $n$-ary tree;
reflecting the lower part gives a second such tree.
The edges of the separating path correspond to the common ordered set of
leaves. Thus reduced
spherical diagrams give the standard reduced pairs of $n$-ary trees; see
\cite{Brown,CFP}.

\section{Peelability of irreducible diagrams via swap moves}\label{sec:swap}

In this section we show that every irreducible fast set of positive bumps
can be replaced, without changing the group it generates, by a fast set
whose dynamical diagram is \emph{peelable}: it remains irreducible under
repeated deletion of the rightmost bump.  On the combinatorial level this
amounts to transforming an irreducible dynamical diagram by swap moves.
All the dynamics is confined to the Gap Swap Lemma
(\cref{lem:gap-swap}); the rest is combinatorics.

Throughout this section, the bumps of a dynamical diagram $D$ on $n$
bumps are numbered so that
\[
R_1<R_2<\dots<R_n ,
\]
the \emph{$R$-order}.  The last foot of $D$ is a right foot, namely
$R_n$ (a left foot is never last, as it precedes the right foot of the
same bump); we call $b_n$ the \emph{last bump}.  For $1\leq k\leq n$ the
\emph{$k$-th prefix} $P_k$ is the sub-diagram on the bumps
$b_1,\dots,b_k$, and $\widehat D:=P_{n-1}$ is the \emph{deleted diagram}.

\subsection{Peelability}

\begin{definition}[Peelable]\label{def:peelable}
A dynamical diagram $D$ is \emph{peelable} if every prefix $P_k$ with
$k\geq2$ is irreducible.  A diagram on at most one bump is peelable, and
for $n\geq2$ a peelable diagram is irreducible and its deleted diagram
$\widehat D$ is again peelable.  A fast set is called peelable if its
dynamical diagram is.
\end{definition}

\begin{lemma}[Recognizing peelability]\label{lem:recognize}
A dynamical diagram $D$ is peelable if and only if, in the $R$-order,
every bump $b_k$ with $k\geq2$ crosses some earlier bump $b_j$, $j<k$;
equivalently, for every $k\geq 2$ there is $j<k$ with $L_j<L_k<R_j$.
\end{lemma}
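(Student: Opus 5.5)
The plan is to prove the two equivalences separately. First I show that peelability is the same as each $b_k$ with $k\ge2$ crossing an earlier bump. Then I translate ``$b_k$ crosses some $b_j$ with $j<k$'' into the foot condition $L_j<L_k<R_j$ using the $R$-order.

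\emph{The foot condition.} Let $j<k$, so $R_j<R_k$ by the $R$-order convention. The two bumps cross exactly when $L_j<L_k<R_j<R_k$ or $L_k<L_j<R_k<R_j$. The second pattern contradicts $R_j<R_k$, so for $j<k$ crossing means $L_j<L_k<R_j$, given $R_j<R_k$. This proves the ``equivalently'' clause.

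\emph{Peelable implies the crossing condition.} Suppose $D$ is peelable and $k\ge2$. Then $P_k$ is irreducible, so its crossing graph is connected. It has at least two vertices, so the vertex $b_k$ has some neighbour in $P_k$. Every other bump of $P_k$ has index $j<k$, so $b_k$ crosses some earlier $b_j$. Note that crossing in a sub-diagram is the same relation as crossing in $D$, since it depends only on the relative order of four feet.

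\emph{The crossing condition implies peelable.} I argue by induction on $k$ that the crossing graph of $P_k$ is connected. For $k=1$ it has a single vertex. For the step, $\GB(P_k)$ is $\GB(P_{k-1})$ with the vertex $b_k$ added, together with its edges to earlier bumps. By hypothesis $b_k$ has at least one such edge. So if $\GB(P_{k-1})$ is connected, so is $\GB(P_k)$. Hence every $P_k$ with $k\ge2$ is irreducible, which is the definition of peelable.

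I do not expect a real obstacle here. The only point needing care is that $\GB(P_k)$ is exactly the induced subgraph of $\GB(D)$ on $b_1,\dots,b_k$. This holds because crossing is defined by the relative order of the four feet, and that order is inherited by the sub-diagram.
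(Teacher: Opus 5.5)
Your proposal is correct and follows the paper's own proof: $b_k$ is non-isolated in the connected graph $\GB(P_k)$, the converse attaches $b_k$ to the connected $\GB(P_{k-1})$ by at least one edge, and $R_j<R_k$ rules out the second crossing pattern. Your remark that $\GB(P_k)$ is the induced subgraph of $\GB(D)$ is a point the paper uses implicitly.
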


\begin{proof}
If all prefixes are irreducible, then for $k\geq2$ the vertex $b_k$ is
not isolated in the connected graph $\GB(P_k)$, so $b_k$ crosses some
$b_j$ with $j<k$.  Conversely, if every $b_k$ ($k\geq2$) crosses an
earlier bump, then by induction each $\GB(P_k)$ is connected: it is
obtained from the connected graph $\GB(P_{k-1})$ by attaching the vertex
$b_k$ along at least one edge.

Finally, for $j<k$ we have $R_j<R_k$, so $b_j$ and $b_k$ cross if and
only if $L_j<L_k<R_j$.
\end{proof}

\subsection{The swap move}\label{subsec:swap-move}

We now make systematic the conjugation phenomenon mentioned in the
introduction.  The local operation introduced below, called a
\emph{swap}, changes the dynamical diagram of a fast generating set while
preserving both fastness and the subgroup generated by the bumps.

Let $D$ be a dynamical diagram and $b$ one of its bumps.  In any
fast realization, a foot of another bump that meets $\supp(b)$ is
contained in the gap of $b$: it is disjoint from the two feet of $b$, and
the gap is the remaining part of the support.  Hence the feet strictly
between $L_b$ and $R_b$ are exactly the feet contained in the gap of $b$.
Listed in order they form the \emph{gap word}
$G_b=f_1f_2\cdots f_r$ of $b$, a purely combinatorial feature of $D$.
We say a bump \emph{owns} its two feet.

\begin{definition}[Closed cut; swap]\label{def:cut}
A \emph{cut} of the gap word $G_b=XY$ is a factorization into an initial
block $X=f_1\cdots f_t$ and a terminal block $Y=f_{t+1}\cdots f_r$
\textup{(}$0\le t\le r$\textup{)}.  The cut is \emph{closed} if no bump
owns a foot in $X$ and a foot in $Y$.  The \emph{swap} at a closed
cut replaces the diagram $D$ \textup{(}with gap word $XY$ at
$b$\textup{)} by the diagram $D'$ which is identical except that the gap
word of $b$ is $YX$: the two blocks are interchanged, the internal order
of each block and all other feet being unchanged.  Two diagrams are
\emph{swap-equivalent} if a finite sequence of swaps carries one to the
other.
\end{definition}

\begin{lemma}[Gap Swap Lemma]\label{lem:gap-swap}
Let $B$ be a fast realization of the dynamical diagram $D$, let
$b\in B$ have support $(a,c)$, and let $G_b=XY$ be a closed cut with
$Y$ non-empty \textup{(}if $Y=\emptyset$ the swap is trivial\textup{)}.
Let $M\subseteq B$ be the set of owners of the feet of $Y$, and let $q$
be the infimum of the leftmost foot of $Y$.  Then the set
\[
B' \;=\; \bigl(B\setminus M\bigr)\ \cup\ \{\,b^{-1}yb \mid y\in M\,\},
\]
with the marking obtained by transporting the markers of the members of
$M$ by $b^{-1}$, moving the marker of $b$ to $m'=b^{-1}(q)$, and keeping
all other markers, is a fast realization of the swapped diagram $D'$, and
$\langle B'\rangle=\langle B\rangle$.
\end{lemma}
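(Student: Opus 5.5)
The equality of groups is immediate: each new generator $b^{-1}yb$ lies in $\langle B\rangle$, and conversely $y=b(b^{-1}yb)b^{-1}$ with $b\in B'$ (note $b\notin M$, since $b$ owns no foot of its own gap word). So the work is to show that $B'$ with the stated marking is fast and that the order of its feet is $D'$. I would first set up the geometry. Let $p$ be the supremum of the rightmost foot of $X$ (or $p=m_b$ if $X$ is empty). Let $r$ be the supremum of the rightmost foot of $Y$, so $q\le r\le b(m_b)$. The gap of $b$ splits as $[m_b,q)\supseteq$ the $X$-feet and $[q,b(m_b))\supseteq$ the $Y$-feet.

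Closedness of the cut means each $y\in M$ has both feet inside $[q,b(m_b))$, so $\supp(y)\subseteq(q,b(m_b))$. Also no foot of any other bump lies in $\supp(y)$ outside that interval. Indeed, a bump whose support meets $\supp(y)$ has its relevant feet either in $Y$ or nested there; these bumps are exactly the members of $M$, or bumps containing $b$'s whole gap region.

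Next I would describe the transported bumps. Since $b^{-1}$ maps $[q,b(m_b))$ onto $[b^{-1}(q),m_b)\subseteq L_b$, each $b^{-1}yb$ is a positive bump supported in $b^{-1}(\supp y)\subseteq(b^{-1}(q),m_b)$. Its transported feet are the images under $b^{-1}$ of the old ones, and they appear in the same internal order as $Y$. The new marker $m'=b^{-1}(q)$ gives the new left foot $L'_b=(a,b^{-1}(q))$ and the new right foot $R'_b=[q,c)$. The new gap is $[b^{-1}(q),q)$, which contains first the transported $Y$-feet (inside $[b^{-1}(q),m_b)$) and then the $X$-feet (inside $[m_b,q)$). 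So the gap word of $b$ becomes $YX$.

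The remaining checks are disjointness and the positions of all other feet. The old left foot $L_b=(a,m_b)$ contained no foot of another bump, and old $R_b\subseteq$ new $R'_b$. The only new material in $R'_b$ is $[q,b(m_b))$, and that interval has been emptied of feet, because the $Y$-feet moved and the $X$-feet lie left of $q$. New $L'_b$ is a subset of the old $L_b$, so it is disjoint from everything. The transported feet lie in $[b^{-1}(q),m_b)\subseteq$ old $L_b$, which held no other feet, so they are disjoint from all unmoved feet. The feet of bumps outside $M\cup\{b\}$ are unchanged, and their relative order with $L'_b,R'_b$ is as before, since they lie outside $(a,c)$ or inside the $X$ block. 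Finally the markers witness fastness. For $b^{-1}yb$ with marker $b^{-1}(m_y)$, the feet are $b^{-1}(L_y)$ and $b^{-1}(R_y)$, because $b^{-1}yb(b^{-1}m_y)=b^{-1}(y(m_y))$.

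I expect the main obstacle to be the careful bookkeeping showing that feet of bumps not in $M$ never lie in $(q,b(m_b))$ and that no support endpoint of another bump sits there. This is exactly where closedness is used. I would argue it via \cref{rem:marking-indep}: a foot in $(q,b(m_b))$ is a $Y$-foot by definition of the gap word, so its owner is in $M$.
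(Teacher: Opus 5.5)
There is a genuine gap in your second paragraph. Closedness of the cut does \emph{not} imply that each $y\in M$ has both feet in $Y$. It only forbids a bump from owning one foot in $X$ and one in $Y$. A member of $M$ may have its other foot outside the gap word altogether, to the left of $L_b$ or to the right of $R_b$, so that $y$ crosses $b$. For example, in $L_b<L_x<R_x<L_y<R_b<R_y$ the cut $X=L_xR_x$, $Y=L_y$ is closed and non-trivial. Yet $\supp(y)$ extends past $c$, and $\supp(b^{-1}yb)=(b^{-1}(q),c_y)$ is not contained in $(b^{-1}(q),m_b)$.

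So your claims $\supp(y)\subseteq(q,b(m_b))$ and $\supp(b^{-1}yb)\subseteq(b^{-1}(q),m_b)$ are false in general. Your disjointness and ordering checks only account for the feet of $M$ that lie in $Y$. Your auxiliary sentence that no other foot lies in $\supp(y)$ also fails here, since $R_b\subseteq\supp(y)$. As written, your argument proves the lemma only when every owner of a $Y$-foot is nested in $b$. The crossing case is exactly the one the paper needs later: in the Merge Lemma (\cref{lem:merge}), $Y$ contains the right feet $R_{y_p}$ of crossers of $b_n$, whose left feet lie to the left of $L_n$.

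The repair is short, and it is what the paper does. Any foot meeting $(a,c)$ is disjoint from $L_b$ and $R_b$, so it lies in the gap of $b$, hence in $X$ or in $Y$. By closedness, every foot of every $y\in M$ is therefore either a foot of $Y$ or disjoint from $(a,c)$. Since $b^{-1}$ is the identity off $(a,c)$, your correct formula $L_{b^{-1}yb}=b^{-1}(L_y)$, $R_{b^{-1}yb}=b^{-1}(R_y)$ shows that the feet of $M$ outside $(a,c)$ do not move. They therefore remain disjoint from:
\begin{itemize}
\item all other unmoved feet;
\item the new feet $L'_b\cup R'_b\subseteq(a,c)$;
\item the transported $Y$-feet, which stay inside $[b^{-1}(q),m_b)\subseteq(a,c)$.
\end{itemize}
Moreover, every order relation between a foot inside $(a,c)$ and a foot outside it is unchanged. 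With this case added, the rest of your bookkeeping goes through and coincides with the paper's proof: $L'_b=(a,b^{-1}(q))$, $R'_b=[q,c)$, the transported $Y$-feet lie in $[b^{-1}(q),m_b)$, and the $X$-feet lie in $[m_b,q)$.
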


\begin{proof}
Let $m$ be the old marker of $b$.  The
conjugators lie in $\langle B\rangle$, so
$\langle B'\rangle\subseteq\langle B\rangle$; and $b\in B'$ (the feet
of $b$ do not lie in its own gap word, so $b\notin M$), whence
$y=b\,(b^{-1}yb)\,b^{-1}\in\langle B'\rangle$ for $y\in M$ and
$\langle B'\rangle=\langle B\rangle$.

\emph{Feet of the conjugated bumps.}
For $y\in M$ with marker $m_y$, the conjugate $y'=b^{-1}yb$ is again a
positive bump, with support $b^{-1}(\supp(y))$; the transported marker
$b^{-1}(m_y)$ lies in this support, so the marking of $B'$ is a valid
marking, and the feet of $y'$ are
\begin{align*}
L_{y'}&=\bigl(b^{-1}(a_y),\,b^{-1}(m_y)\bigr)=b^{-1}(L_y),\\
R_{y'}&=\bigl[\,y'(b^{-1}(m_y)),\,b^{-1}(c_y)\bigr)
      =\bigl[\,b^{-1}(y(m_y)),\,b^{-1}(c_y)\bigr)=b^{-1}(R_y),
\end{align*}
because $y'\circ b^{-1}=b^{-1}\circ y$ and $b^{-1}$ is an
orientation-preserving homeomorphism.  So every foot of every conjugated
bump is the $b^{-1}$-image of the corresponding old foot.  By closedness
of the cut, no $y\in M$ owns a foot in $X$; hence each foot of each $y\in M$ is either a foot of $Y$ or
disjoint from $(a,c)$, and in the latter case it is fixed
pointwise by $b^{-1}$.  Thus the feet that actually move are exactly the
feet of $Y$, and they move by $b^{-1}$.

\emph{Positions.}
All feet of $Y$ lie in $[\,q,\ b(m))$: they lie in the gap
$[\,m,\,b(m))$ of $b$ by the definition of the gap word, and at or to the right of
$q$ by the definition of $q$ and the ordering of the gap feet.  Their
images under $b^{-1}$ are pairwise disjoint intervals contained in
$[\,m',\,m)$, in the same relative order, where $m'=b^{-1}(q)$.  The feet
of $X$ lie in $[\,m,\,q)$: they lie in the gap and strictly to the left
of the leftmost foot of $Y$.  The new feet of $b$ are $L'_b=(a,\,m')$ and
$R'_b=[\,b(m'),\,c)=[\,q,\,c)$.

\emph{Fastness.}
By the previous paragraph, the pairwise disjoint intervals $(a,m')$,
$[m',m)$, $[m,q)$ and $[q,c)$ contain, respectively, $L'_b$, the moved
feet (pairwise disjoint, being $b^{-1}$-images of pairwise disjoint
feet), the feet of $X$, and $R'_b$; every other foot is unmoved and
disjoint from $(a,c)$, and unmoved feet were
pairwise disjoint already.  So the new feet are pairwise disjoint and
$B'$ is fast.

\emph{The diagram.}
Within $\supp(b)$ the feet now read: $L'_b$, then the feet of $Y$ (in
$[m',m)$, in their old relative order), then the feet of $X$
(unchanged, in $[m,q)$), then $R'_b$.  Every foot outside $(a,c)$ is
unchanged, and moved feet remain inside $(a,c)$, so all order relations
with the outside are unchanged.  Hence the dynamical diagram of
$(B',\text{new marking})$ is exactly $D'$.
\end{proof}

\begin{corollary}\label{cor:swap-groups}
If $D$ and $D'$ are swap-equivalent dynamical diagrams, then for every
fast realization $B$ of $D$ there is a fast realization $B'$ of $D'$
with $\langle B'\rangle=\langle B\rangle$.  In particular, the groups
generated by fast realizations of $D$ and of $D'$ are isomorphic.
\end{corollary}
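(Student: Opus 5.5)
The plan is to induct on the length of a swap sequence from $D$ to $D'$, applying the Gap Swap Lemma (\cref{lem:gap-swap}) once per swap. Suppose $D=D_0,D_1,\dots,D_s=D'$, where each $D_{i+1}$ is obtained from $D_i$ by one swap at a closed cut. Let $B_0=B$, fixed together with a marking witnessing fastness. Given a marked fast realization $B_i$ of $D_i$, the swap producing $D_{i+1}$ is at a closed cut $G_b=XY$ of the gap word of some bump $b$ of $D_i$. That bump corresponds to a specific element of $B_i$, and the gap word computed from $B_i$ is the same combinatorial word as in $D_i$.

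If $Y$ is empty, the swap is trivial and we set $B_{i+1}=B_i$. Otherwise, \cref{lem:gap-swap} produces a marked set $B_{i+1}$ which is a fast realization of $D_{i+1}$ with $\langle B_{i+1}\rangle=\langle B_i\rangle$. Induction then gives $B'=B_s$, a fast realization of $D'$ with $\langle B'\rangle=\langle B\rangle$.

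The one point needing care is the enumeration, since realizing a diagram requires an enumeration matching the diagram's labels. Here $B_{i+1}$ is enumerated by replacing each $y\in M$ by $b^{-1}yb$ in the same index position. The lemma's proof shows that each new foot is the $b^{-1}$-image of the corresponding old foot, or is unchanged, so the labels match $D_{i+1}$. Note also that the swap relation is symmetric (swapping back at the cut $YX$ is again closed), although this is not needed for the stated direction.

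For the \emph{in particular} clause, let $C$ be any fast realization of $D'$. By the first part there is a realization $B'$ of $D'$ with $\langle B'\rangle=\langle B\rangle$, and \cref{thm:diagram-isomorphism} gives $\langle C\rangle\cong\langle B'\rangle$. Hence $\langle C\rangle\cong\langle B\rangle$. There is no real obstacle here; the only care needed is tracking the enumeration, as noted above.
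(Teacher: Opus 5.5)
Your proposal is correct and follows the paper's proof: apply \cref{lem:gap-swap} once per swap along the sequence, which preserves the generated group at each step, and then invoke \cref{thm:diagram-isomorphism} for the isomorphism clause. Your handling of trivial swaps and of the enumeration spells out what the paper leaves implicit.
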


\begin{proof}
Apply \cref{lem:gap-swap} step by step along a sequence of swaps
carrying $D$ to $D'$: each step replaces some members of the current
set by their conjugates inside the group it generates, and changes
markers, so it preserves the generated group.  The last statement
follows from \cref{thm:diagram-isomorphism}.
\end{proof}

The geometric effect of a swap is illustrated in \cref{fig:swap}.

\begin{figure}[ht]
\centering
\begin{tikzpicture}[scale=1.0]
  \begin{scope}
    \draw[->] (-2.1,0)--(9.7,0);
    \draw[very thick,red] (-1.7,0)--(-0.8,0)
      node[midway,below=1pt,align=center] {\scriptsize feet of $M$\\[-1pt]\scriptsize left of $a$};
    \draw[very thick] (0.1,0)--(1.2,0) node[midway,below=1pt] {\small $L_b$};
    \draw[very thick,blue] (1.7,0)--(3.1,0) node[midway,below=1pt] {\small $X$};
    \draw[very thick,red] (3.5,0)--(4.9,0) node[midway,below=1pt] {\small $Y$};
    \draw[very thick] (5.3,0)--(6.4,0) node[midway,below=1pt] {\small $R_b$};
    \draw[very thick,red] (7.4,0)--(8.4,0) node[midway,below=1pt] {\small feet of $M$ beyond $c$};
    \draw[thick] (0.1,0.06) .. controls (1.5,1.8) and (5.0,1.8) .. (6.4,0.06);
    \node at (3.25,1.6) {\small $b$};
    \node[left] at (-2.0,0.7) {\small before:};
  \end{scope}
  \begin{scope}[yshift=-3.1cm]
    \draw[->] (-2.1,0)--(9.7,0);
    \draw[very thick,red] (-1.7,0)--(-0.8,0)
      node[midway,below=1pt,align=center] {\scriptsize feet of $M$\\[-1pt]\scriptsize left of $a$};
    \draw[very thick] (0.1,0)--(0.6,0) node[midway,above=1pt] {\small $L'_b$};
    \draw[very thick,red] (0.75,0)--(1.55,0) node[midway,below=1pt] {\small $b^{-1}(Y)$};
    \draw[very thick,blue] (1.7,0)--(3.1,0) node[midway,below=1pt] {\small $X$};
    \draw[very thick] (3.5,0)--(6.4,0) node[midway,below=1pt] {\small $R'_b$};
    \draw[very thick,red] (7.4,0)--(8.4,0) node[midway,below=1pt] {\small feet of $M$ beyond $c$};
    \draw[thick] (0.1,0.06) .. controls (1.5,1.8) and (5.0,1.8) .. (6.4,0.06);
    \node at (3.25,1.6) {\small $b$};
    \node[left] at (-2.0,0.7) {\small after:};
  \end{scope}
\end{tikzpicture}
\caption{The swap $XY\to YX$ at the bump $b$.  The bump $b$ itself, and
hence its support $(a,c)$ (spanned by the arc, which joins the left
endpoint of $L_b$ to the right endpoint of $R_b$), is unchanged.  The
owners of the feet of $Y$ are conjugated by $b^{-1}$; their feet inside
the gap of $b$ (red) are dragged into the old left-foot zone of $b$,
while their feet beyond $\supp(b)$ do not move and appear identically
before and after.  The right foot of $b$ expands leftwards over the
vacated zone, and the marker of $b$ moves left.}
\label{fig:swap}
\end{figure}

\subsection{Merging the components}\label{subsec:merge}

Fix an irreducible diagram $D$ with $n\geq2$ bumps in $R$-order, let
$b_n$ be the last bump, and recall that the \emph{deleted diagram}
$\widehat D=P_{n-1}$ is the sub-diagram of $D$ on the bumps
$b_1,\dots,b_{n-1}$, obtained by deleting $b_n$.  Since $R_n$ is the
last foot, no bump crosses $b_n$ from the right; thus the bumps crossing
$b_n$ --- the \emph{crossers} --- are exactly those $x$ with
\[
L_x<L_n<R_x\ (<R_n),
\]
and the feet strictly between $L_n$ and $R_n$ (the gap word of $b_n$)
are the right feet of the crossers together with both feet of the bumps
nested in $b_n$.

Throughout this subsection, ``component'' means a connected component of
$\GB(\widehat D)$, the crossing graph of the deleted diagram.  For a set
$S$ of bumps with connected crossing graph, the \emph{reach} of $S$ is
the position interval $I_S=(\min_S L,\ \max_S R)$.  Crossing bumps have
overlapping spans, so the union of the spans of the members of $S$ is an
interval, namely all of $I_S$: \emph{the reach is covered by the spans of
the members of $S$}.

The following lemma is the engine of all the structural arguments.

\begin{lemma}\label{lem:spread}
Let $D$ be a dynamical diagram, let $S$ be a set of bumps of $D$ whose
crossing graph is connected, and let $w\notin S$ be a bump crossing no
member of $S$.  If some member of $S$ is nested in $w$, then every member
of $S$ is nested in $w$; in that case $I_S\subseteq\spn(w)$, and in
particular $L_w<\min_S L$ and $R_w>\max_S R$.
\end{lemma}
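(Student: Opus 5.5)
The argument is a propagation along edges of the crossing graph of $S$, followed by a direct check of the reach inclusion. Since $\GB(D|_S)$ is connected, it suffices to prove a single local step: if $s\in S$ is nested in $w$ and $t\in S$ crosses $s$, then $t$ is nested in $w$. Then, starting from the member of $S$ assumed nested in $w$ and moving along paths in the crossing graph of $S$, every member of $S$ is nested in $w$.

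For the local step, note that $t\neq w$ (as $w\notin S$) and $t$ does not cross $w$ by hypothesis. Hence, by the trichotomy recorded after \cref{def:dyndiag}, exactly one of the following holds: $t$ is nested in $w$, $w$ is nested in $t$, or $\spn(t)\cap\spn(w)=\emptyset$. We rule out the last two.

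Since $s$ crosses $t$, one foot of $t$ lies in $\spn(s)$. Because $s$ is nested in $w$, we have $\spn(s)\subseteq\spn(w)$. So some foot of $t$ lies strictly inside $\spn(w)$, which excludes disjoint spans.

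If instead $w$ were nested in $t$, then $L_t<L_w<L_s<R_s<R_w<R_t$, so $s$ would be nested in $t$. This contradicts the assumption that $s$ and $t$ cross.

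Therefore $t$ is nested in $w$, that is, $L_w<L_t<R_t<R_w$.

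Once every member of $S$ is nested in $w$, each $L_s$ satisfies $L_s>L_w$ and each $R_s$ satisfies $R_s<R_w$. Taking the minimum and maximum over $S$ gives $L_w<\min_S L$ and $\max_S R<R_w$, hence $I_S\subseteq\spn(w)$.

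The only point needing care is the case analysis in the local step. In particular, the trichotomy (cross, nested either way, or disjoint spans) has to be used for the pair $t,w$, and "nested" has to be read in the correct direction in each case.
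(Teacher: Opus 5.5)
Your proposal is correct and follows essentially the same argument as the paper. You propagate nesting along a crossing path in $S$, exclude disjoint spans because some foot of $t$ lies in $\spn(s)\subseteq\spn(w)$, and exclude $w$ being nested in $t$ because that would force $s$ to be nested in $t$; the inequalities $L_w<\min_S L$, $R_w>\max_S R$ and the inclusion $I_S\subseteq\spn(w)$ then follow exactly as in the paper.
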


\begin{proof}
Let $v\in S$ be nested in $w$ and let $z\in S$ be arbitrary; choose a
path of crossing bumps $v=w_0,w_1,\dots,w_k=z$ inside $S$.  Inductively
assume $w_i$ is nested in $w$, i.e.,
$\spn(w_i)\subset\spn(w)$.  Since $w_{i+1}$ crosses $w_i$, the spans of
$w_{i+1}$ and $w$ overlap.  They do not cross ($w$ crosses no member of
$S$).  If $\spn(w)\subset\spn(w_{i+1})$, then
$\spn(w_i)\subset\spn(w)\subset\spn(w_{i+1})$, so $w_{i+1}$ nests $w_i$,
contradicting that they cross.  Hence
$\spn(w_{i+1})\subset\spn(w)$, completing the induction.  All feet of
members of $S$ therefore lie strictly inside $\spn(w)$, which gives
$I_S\subseteq\spn(w)$ and the strict inequalities (the feet $L_w$,
$\min_S L$, $\max_S R$, $R_w$ being pairwise distinct).
\end{proof}

\begin{lemma}[Structure of the components]\label{lem:structure}
Suppose $D$ is irreducible with $n\geq3$ bumps and $\GB(\widehat D)$ is
disconnected.  Then:
\begin{enumerate}
\item every component contains a crosser of $b_n$;
\item if $x$ is a crosser in a component $K$ and $y$ is a crosser in a
different component, then one of $\spn(x),\spn(y)$ strictly contains the
other; if $\spn(x)\subset\spn(y)$, then every member of $K$ is nested in
$y$, and $I_K\subseteq\spn(y)$;
\item for distinct components, define $K\prec K'$ if some crosser
$y\in K'$ satisfies $I_K\subseteq\spn(y)$.  Then $\prec$ is a strict
linear order, and in this definition ``some'' may be replaced by
``every''.  In particular there is a unique minimum component, called
the \emph{innermost} component.
\end{enumerate}
\end{lemma}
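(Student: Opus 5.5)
For (1) we use the connectivity of $\GB(D)$. Let $K$ be a component of $\GB(\widehat D)$. Since $n\geq3$ and $\GB(\widehat D)$ is disconnected, $K$ is not all of $\widehat D$. Because $\GB(D)$ is connected, there is an edge of $\GB(D)$ leaving $K$. Any edge of $\GB(D)$ joining two bumps of $\widehat D$ is already an edge of $\GB(\widehat D)$, since the crossing relation only involves the feet of the two bumps concerned. So the edge leaving $K$ must go to $b_n$, and some member of $K$ is therefore a crosser of $b_n$.

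For (2), let $x\in K$ and $y\in K'$ be crossers with $K\neq K'$. They lie in different components, so they do not cross. Their spans are not disjoint, because $L_n$ lies in both: $L_x<L_n<R_x$ and $L_y<L_n<R_y$. By the trichotomy recorded after \cref{def:dyndiag}, one span strictly contains the other. Suppose $\spn(x)\subset\spn(y)$. We apply \cref{lem:spread} with $S=K$ and $w=y$. The crossing graph of $K$ is connected, and $y$ crosses no member of $K$ because $y$ lies in another component of $\GB(\widehat D)$. The hypothesis also holds, since $x\in K$ is nested in $y$. Hence every member of $K$ is nested in $y$ and $I_K\subseteq\spn(y)$.

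For (3), we first show that ``some'' may be replaced by ``every''. Suppose $I_K\subseteq\spn(y)$ for a crosser $y\in K'$, and let $y'\in K'$ be another crosser. Pick a crosser $x\in K$, which exists by (1). By (2), either $\spn(x)\subset\spn(y')$ or $\spn(y')\subset\spn(x)$.
\begin{itemize}
\item In the first case, (2) gives $I_K\subseteq\spn(y')$, as wanted.
\item In the second case, apply (2) with the roles of $K$ and $K'$ exchanged. Every member of $K'$ is then nested in $x$, so $I_{K'}\subseteq\spn(x)\subseteq I_K\subseteq\spn(y)$. But $y\in K'$ gives $\spn(y)\subseteq I_{K'}$, so $\spn(y)\subseteq\spn(x)\subseteq\spn(y)$. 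This forces $x=y$, which is absurd.
\end{itemize}

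Next we check that $\prec$ is a strict linear order.

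\emph{Totality.} For distinct components $K,K'$, choose crossers $x\in K$ and $y\in K'$ and apply (2). This yields $K\prec K'$ or $K'\prec K$.

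\emph{Irreflexivity and asymmetry.} If $K\prec K'$ and $K'\prec K$, choose crossers $x\in K$ and $y\in K'$. Then $I_K\subseteq\spn(y)\subseteq I_{K'}\subseteq\spn(x)\subseteq I_K$. So $\spn(x)=I_K=\spn(y)$, which again forces $x=y$, a contradiction. Irreflexivity holds because the relation is only defined for distinct components.

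\emph{Transitivity.} Suppose $K\prec K'\prec K''$, and choose a crosser $z\in K''$. By the ``every'' form, $I_K\subseteq\spn(y)\subseteq I_{K'}\subseteq\spn(z)$ for any crosser $y\in K'$. Hence $I_K\subseteq\spn(z)$, that is, $K\prec K''$.

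There are finitely many components, so a strict linear order on them has a unique minimum, the innermost component. The main care is the ``every'' upgrade, which is handled by the contradiction in the second case above.
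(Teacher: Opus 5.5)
Your proof is correct and follows essentially the paper's argument. Parts (1) and (2) match it, and the only difference is the order of steps in (3). You prove the ``every'' form first, inlining the containment chain the paper uses for asymmetry, and then use it for asymmetry and transitivity. The paper instead proves asymmetry and transitivity with witness crossers and deduces the ``every'' form from asymmetry. In your transitivity step, $K\neq K''$ follows from the asymmetry you have already established.
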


\begin{proof}
For (1), $\GB(D)$ is connected, so every component of
$\GB(\widehat D)$ contains a neighbour of $b_n$.

For (2), the spans of two crossers both contain $L_n$.  Since crossers in
different components do not cross, their spans are strictly nested.  If
$\spn(x)\subset\spn(y)$, then $x\in K$ is nested in $y$, while $y$
crosses no member of $K$; \cref{lem:spread} gives the conclusion.

For (3), part (2) shows that two distinct components are comparable.
They cannot be comparable in both directions: otherwise crossers
$y\in K$ and $y'\in K'$ would give
\[
\spn(y')\subseteq I_{K'}\subseteq\spn(y)
 \subseteq I_K\subseteq\spn(y'),
\]
which is impossible.  Transitivity follows from
\[
I_K\subseteq\spn(y')\subseteq I_{K'}\subseteq\spn(y'').
\]
Thus $\prec$ is a strict linear order.  Finally suppose $K\prec K'$ and
let $y$ be any crosser in $K'$.  If $I_K\not\subseteq\spn(y)$, then
part (2), applied to $y$ and any crosser $x\in K$, gives
$\spn(y)\subset\spn(x)$ and hence $I_{K'}\subseteq\spn(x)$; this says
$K'\prec K$, a contradiction.  Therefore every crosser in $K'$ contains
$I_K$ in its span.
\end{proof}

\begin{corollary}[Innermost Dichotomy]\label{cor:innermost}
Let $D$, $\widehat D$ be as in \cref{lem:structure} and let $K$ be the
innermost component.  Then for every bump $w\notin K\cup\{b_n\}$, exactly
one of the following holds:
\begin{enumerate}
\item every member of $K$ is nested in $w$; then $I_K\subseteq\spn(w)$,
$L_w<\min_K L$ and $R_w>\max_K R$;
\item $\spn(w)\cap I_K=\emptyset$.
\end{enumerate}
\end{corollary}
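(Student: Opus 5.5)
We must show that for every bump $w\notin K\cup\{b_n\}$ exactly one of the two alternatives holds. The alternatives are mutually exclusive because $I_K$ is a non-empty interval, and in alternative (1) it lies inside $\spn(w)$. So the work is to show that one of them holds. The key observation is that $w$ crosses no member of $K$. Indeed, $w\neq b_n$ and $w\notin K$, so $w$ lies in some other component of $\GB(\widehat D)$, and bumps in different components do not cross. This puts us in the setting of \cref{lem:spread} with $S=K$.

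Suppose some member of $K$ is nested in $w$. Then \cref{lem:spread} gives alternative (1) directly: every member of $K$ is nested in $w$, $I_K\subseteq\spn(w)$, $L_w<\min_K L$ and $R_w>\max_K R$.

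So assume no member of $K$ is nested in $w$. For each $v\in K$, the pair $v,w$ does not cross and $v$ is not nested in $w$. Hence either $\spn(v)\cap\spn(w)=\emptyset$ or $w$ is nested in $v$. If every $v\in K$ has span disjoint from $\spn(w)$, then, since the spans of the members of $K$ cover $I_K$, we get $\spn(w)\cap I_K=\emptyset$, which is alternative (2).

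The remaining case, where $w$ is nested in some $v\in K$, is the main point, and innermostness must rule it out. Let $K'$ be the component containing $w$. Since $w$ crosses no member of $K$ and $K'$ is connected, \cref{lem:spread} applied to $S=K'$ and the bump $v$ would force every member of $K'$ to be nested in $v$, provided $v$ crosses no member of $K'$. That proviso holds because $v\in K$ and $K\neq K'$. Hence $I_{K'}\subseteq\spn(v)$. By \cref{lem:structure}(1), $K'$ contains a crosser $y$ of $b_n$, and then $\spn(y)\subseteq I_{K'}\subseteq\spn(v)$.

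Now pick any crosser $x\in K$. By \cref{lem:structure}(3), since $K$ is innermost we have $K\prec K'$, and "every" crosser of $K'$ contains $I_K$ in its span. So $I_K\subseteq\spn(y)$, and therefore
\[
\spn(v)\subseteq I_K\subseteq\spn(y)\subseteq I_{K'}\subseteq\spn(v).
\]
This forces $\spn(v)=\spn(y)$. But $v\neq y$, since they lie in different components, and distinct bumps have distinct feet, so their spans cannot be equal. This contradiction eliminates the case and completes the dichotomy.
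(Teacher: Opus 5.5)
Your proof is correct and takes essentially the same route as the paper. Both arguments apply \cref{lem:spread} to $K$ and $w$ to obtain alternative (1), and both rule out $w$ being nested in some $v\in K$ by applying \cref{lem:spread} to the component of $w$ and $v$, then using \cref{lem:structure}(1),(3) to reach the impossible chain $\spn(v)\subseteq I_K\subseteq\spn(y)\subseteq I_{K'}\subseteq\spn(v)$. The difference is only organizational: you split first on whether some member of $K$ is nested in $w$, whereas the paper splits first on whether $\spn(w)$ meets $I_K$. The crosser $x\in K$ you pick is never used.
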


\begin{proof}
Suppose $\spn(w)\cap I_K\neq\emptyset$.  Since the reach $I_K$ is covered
by the spans of the members of $K$, the span of $w$ overlaps
$\spn(v)$ for some $v\in K$.  The bump $w$ lies in a component
$K_w\neq K$ of $\GB(\widehat D)$, so $w$ crosses no member of $K$, and
$\spn(w)$, $\spn(v)$ are strictly nested.

If $\spn(w)\subset\spn(v)$, then $w$ is a member of $K_w$ nested in $v$,
and $v\notin K_w$ crosses no member of $K_w$; by \cref{lem:spread},
every member of $K_w$ is nested in $v$.  By \cref{lem:structure}(3) some
crosser $y_p\in K_w$ has $\spn(y_p)\supseteq I_K$; but then
\[
I_K\;\subseteq\;\spn(y_p)\;\subset\;\spn(v)\;\subseteq\;I_K ,
\]
a contradiction.  Hence $\spn(v)\subset\spn(w)$, and \cref{lem:spread}
(applied to $K$ and $w$) yields case (1).  The two cases are clearly
mutually exclusive.
\end{proof}

\begin{lemma}[Initial Block Lemma]\label{lem:initial-block}
Let $D$ and $K$ be as in \cref{cor:innermost}, and let $s_F$ be the last
foot of a member of $K$ occurring in the gap word of $b_n$.  Then the
feet owned by members of $K$ that occur in the gap word of $b_n$ form an
initial block of that gap word.
\end{lemma}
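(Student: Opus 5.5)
Let $G$ denote the gap word of $b_n$. The plan is to show that no foot in $G$ owned by a bump outside $K$ can precede $s_F$; equivalently, every foot of $G$ lying before $s_F$ is owned by a member of $K$. Write $I_K=(\min_K L,\max_K R)$. First we locate the relevant feet relative to $I_K$. The feet of members of $K$ that occur in $G$ are either right feet of crossers in $K$, or feet of members of $K$ nested in $b_n$. In either case they lie in $I_K\cup\{\min_K L,\max_K R\}$ and strictly to the right of $L_n$. Since $K$ contains a crosser $x$ (\cref{lem:structure}(1)), we have $\min_K L\le L_x<L_n$. Hence every foot in $G$ owned by a member of $K$ lies in the interval $(L_n,\max_K R]$, and $s_F\le \max_K R$. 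Moreover $\max_K R$ itself lies in $G$: it is either $R_x$ for a crosser, or a right foot of a bump nested in $b_n$, and in both cases it is less than $R_n$. So $s_F=\max_K R$.

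Now let $f$ be a foot in $G$ owned by some $w\notin K\cup\{b_n\}$, and suppose for contradiction that $f<s_F=\max_K R$. Since $f$ lies in $G$, we also have $f>L_n>\min_K L$, so $f\in I_K$. Apply the Innermost Dichotomy (\cref{cor:innermost}) to $w$. Case (2), $\spn(w)\cap I_K=\emptyset$, is impossible: $f$ is an endpoint of $\spn(w)$ lying strictly inside the open interval $I_K$, so $\spn(w)$ meets $I_K$. In case (1), $L_w<\min_K L$ and $R_w>\max_K R$, so neither foot of $w$ lies in $I_K$. This contradicts $f\in I_K$. Hence every foot of $G$ before $s_F$ is owned by a member of $K$, and the $K$-feet in $G$ form an initial block ending at $s_F$.

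The main obstacle is the positional bookkeeping in the first paragraph: we need both that all $K$-feet in $G$ lie at or before $\max_K R$, and that $\max_K R$ is itself in $G$. For the latter, if $\max_K R$ belonged to a bump not crossing and not nested in $b_n$, its span would be disjoint from $\spn(b_n)$ or would nest $b_n$. The first contradicts $\max_K R>L_n$. The second is impossible since $R_n$ is the last foot.
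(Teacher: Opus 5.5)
Your proof is correct and follows the paper's argument. Every gap-word foot before $s_F$ lies in $I_K$: the lower bound comes from a crosser in $K$, and the upper bound from $s_F\le\max_K R$. The Innermost Dichotomy then rules out any such foot being owned by a bump outside $K$. Your extra step showing $s_F=\max_K R$ is true but unnecessary, since only the inequality $s_F\le\max_K R$ is used.
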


\begin{proof}
First, every foot $e$ with $L_n<e<s_F$ lies in $I_K$: choosing a
crosser $x\in K$ gives $\min_K L\leq L_x<L_n<e$, and $s_F$ is a foot of
a member of $K$, so $e<s_F\leq\max_K R$; hence
$e\in(\min_K L,\,\max_K R)=I_K$.

Now suppose some such foot $e$ is owned by a bump
$w\notin K\cup\{b_n\}$.  The foot $e$ is an endpoint of $\spn(w)$
lying in the open interval $I_K$, so $\spn(w)\cap I_K\neq\emptyset$.  By
\cref{cor:innermost}, $I_K\subseteq\spn(w)$ with $L_w<\min_K L$ and
$R_w>\max_K R$.  But $e\in\{L_w,R_w\}$, and both $L_w$ and $R_w$ lie
outside the interval $(\min_K L,\,\max_K R)$ containing $e$ --- a
contradiction.
\end{proof}

\begin{lemma}[Merge Lemma]\label{lem:merge}
Let $D$ be irreducible with $n\geq3$ bumps and $\GB(\widehat D)$
disconnected, let $K$ be the innermost component of $\GB(\widehat D)$,
and let $X$ be the initial block of the gap word of $b_n$ formed by the
feet owned by members of $K$ that occur in that gap word
\textup{(}\cref{lem:initial-block}\textup{)}, and let $Y$ be the rest.
Then the cut $(X)(Y)$ is closed, and the swap $XY\to YX$ at $b_n$
produces an irreducible diagram $D^{\ast}$ in which
$\GB(\widehat{D^{\ast}})$ is connected.  Moreover the swap destroys no
crossing.
\end{lemma}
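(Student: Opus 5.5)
The plan is to establish three things in turn: the cut is closed, the swap destroys no crossing, and the deleted diagram of $D^\ast$ is connected. Irreducibility of $D^\ast$ will then follow at once.

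\emph{Closedness.} Suppose a bump $w$ owns a foot in $X$ and a foot in $Y$. By the definition of $X$, the owner of a foot in $X$ lies in $K$. Then the foot of $w$ in $Y$ is also a foot of a member of $K$ in the gap word of $b_n$. But $Y$ contains no such feet, so no such $w$ exists.

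\emph{No crossing is destroyed.} The swap moves only the feet of $X$ and $Y$ relative to each other; it keeps the order inside each block and all relations with feet outside the gap of $b_n$. So a pair of bumps can change status only if one owns a foot in $X$ and the other owns a foot in $Y$. Let $u\in K$ own a foot in $X$ and $w\notin K$ own a foot in $Y$. If $u$ and $w$ crossed in $D$, they would lie in the same component of $\GB(\widehat D)$, since neither is $b_n$; this contradicts $u\in K$, $w\notin K$. Crossings between $b_n$ and other bumps are also preserved, because the set of feet inside $\spn(b_n)$ is unchanged. So all crossings of $D$ survive, and in particular $D^\ast$ is irreducible.

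\emph{New crossings merge the components.} This is the main step, and I expect it to be the hardest. First I would note that $Y$ contains a foot of every crosser outside $K$: a crosser $y$ has $L_y<L_n<R_y<R_n$, so $R_y$ lies in the gap word of $b_n$, and it is in $Y$ when $y\notin K$. Next, $X$ contains the right foot of some crosser $x\in K$, which exists by \cref{lem:structure}(1). Now take a crosser $y$ in another component $K'$. By \cref{lem:structure}(3), $I_K\subseteq\spn(y)$, so $L_y<L_x$ and $R_x<R_y$. After the swap the gap of $b_n$ reads $YX$, so $R_y$ now precedes $R_x$, while $L_y<L_x<L_n$ is unchanged. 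Hence $L_y<L_x<R_y<R_x$ in $D^\ast$, and $x$ and $y$ cross in $D^\ast$. Thus every other component is joined to $K$ through a crosser. All old crossings survive, so each old component stays internally connected, and therefore $\GB(\widehat{D^\ast})$ is connected.

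The point needing care is that $R_y$ really lies in $Y$ and $R_x$ in $X$, that is, each crosser's right foot sits in the gap word of $b_n$. This holds since $L_n<R_{\cdot}<R_n$ for crossers. I would also confirm that $L_x$ and $L_y$ lie outside $\spn(b_n)$, so that their positions are untouched by the swap.
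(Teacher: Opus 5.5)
Your proposal is correct and follows the paper's proof essentially step for step: closedness comes from the definition of $X$, and no crossing is destroyed because only pairs of feet split between $X$ and $Y$ change order, and their owners lie in different components of $\GB(\widehat D)$. The new crossing $L_y<L_x<L_n<R_y<R_x$ between a crosser $x\in K$ and any crosser $y$ of another component comes from \cref{lem:structure}(3); the only cosmetic difference is that you get irreducibility of $D^\ast$ directly from the survival of all crossings of the connected graph $\GB(D)$, whereas the paper uses connectivity of $\GB(\widehat{D^\ast})$ together with the unchanged, non-empty set of crossers of $b_n$.
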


\begin{proof}
\emph{Closedness.}  The owners of the feet of $X$ are members of $K$,
and \emph{all} feet of members of $K$ lying in the gap word belong to
$X$ by its definition; the owners of feet of $Y$ are not in $K$.
Hence no bump owns a foot in each block.

\emph{No crossing is destroyed.}  The swap only interchanges the blocks
$X$ and $Y$: the relative order of any two feet not separated by the cut
is unchanged, and so is the order of every foot relative to $L_n$ and
$R_n$ (only feet strictly between them are permuted, and they stay
between them); in particular the set of crossers of $b_n$ is unchanged.
A crossing could therefore only be destroyed between a member of $K$
(owning a foot of $X$) and a bump owning a foot of $Y$ --- a bump that
does not belong to $K$; bumps in distinct components of
$\GB(\widehat D)$ do not cross, so no such crossing existed.

\emph{Connectivity.}  Let $K_p\neq K$ be a component, let $y_p\in K_p$
be a crosser with $I_K\subseteq\spn(y_p)$
\textup{(}\cref{lem:structure}(3)\textup{)}, and let $x\in K$ be a
crosser.  Before the swap, $L_{y_p}<L_x<R_x<R_{y_p}$, the foot $R_x$
lies in $X$ (it lies in the gap word since $x$ is a crosser) and the
foot $R_{y_p}$ lies in $Y$ (it lies in the gap word and its owner does
not belong to $K$).  After the swap the block $X$ sits to the right of
$Y$, so $R_{y_p}$ comes before $R_x$, while the left feet are untouched:
\[
L_{y_p}\;<\;L_x\;<\;L_n\;<\;R_{y_p}\;<\;R_x ,
\]
i.e., $x$ crosses $y_p$ in $D^{\ast}$.  Thus in
$\GB(\widehat{D^{\ast}})$ the component of $K$ acquires an edge to every
other component, while no edge is destroyed; hence
$\GB(\widehat{D^{\ast}})$ is connected.  Finally $\GB(D^{\ast})$ is
connected because the crossers of $b_n$ are unchanged, and there is at
least one ($b_n$ is not isolated in the connected graph $\GB(D)$).
\end{proof}

\subsection{Lifting swaps and the peelability theorem}

\begin{lemma}[Free Lifting Lemma]\label{lem:free-lift}
Let $D$ be a dynamical diagram with last bump $b_n$ and deleted diagram
$\widehat D$.  Every swap on $\widehat D$ \textup{(}at a bump $w\neq b_n$
and a closed cut of its gap word in $\widehat D$\textup{)} extends to a
swap on $D$ at $w$ whose effect on the feet of $\widehat D$ is the
given one.
\end{lemma}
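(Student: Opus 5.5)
Fix a bump $w\neq b_n$ and a closed cut $\widehat G_w=\widehat X\,\widehat Y$ of the gap word of $w$ in $\widehat D$. The plan is to reinsert the feet of $b_n$ into this cut and check that closedness survives. Since $R_n$ is the last foot of $D$, it never lies strictly between $L_w$ and $R_w$: we have $R_w<R_n$, so $R_n$ is not in the gap of $w$. Hence the gap word $G_w$ of $w$ in $D$ is $\widehat G_w$ with at most one extra letter inserted, namely $L_n$, and this happens exactly when $L_w<L_n<R_w$, i.e.\ when $b_n$ crosses $w$. (Nesting of $b_n$ in $w$ is impossible, because $R_n$ is last.)

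If $L_n$ does not occur in $G_w$, then $G_w=\widehat G_w$. The cut $XY:=\widehat X\widehat Y$ is closed in $D$, because the only new bump is $b_n$ and it owns no foot in $G_w$. The swap at $w$ in $D$ then permutes exactly the same feet in the same way.

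If $L_n$ occurs in $G_w$, it sits at some position among the letters of $\widehat X\widehat Y$. Either it falls strictly inside $\widehat X$ or at its end, or strictly inside $\widehat Y$ or at its start. The boundary case, where $L_n$ lies exactly between the last letter of $\widehat X$ and the first of $\widehat Y$, can be assigned to either side. In every case we obtain a factorization $G_w=XY$ with $X$ equal to $\widehat X$ plus possibly $L_n$, and $Y$ equal to $\widehat Y$ plus possibly $L_n$. Deleting $L_n$ from the blocks recovers $\widehat X,\widehat Y$.

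Closedness of this cut in $D$ follows from three observations:
\begin{itemize}
\item Any bump other than $b_n$ owns feet only among those of $\widehat G_w$, so it cannot straddle the cut, since the restricted cut was closed.
\item The bump $b_n$ owns only the single foot $L_n$ in $G_w$, because $R_n\notin G_w$.
\item Therefore $b_n$ cannot own a foot in both blocks.
\end{itemize}

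Performing the swap $XY\to YX$ at $w$ in $D$ yields a diagram $D'$ in which all feet outside $G_w$ keep their positions. Inside the gap, the order is $YX$. Restricting $D'$ to the feet of $b_1,\dots,b_{n-1}$, i.e.\ deleting $L_n$ and $R_n$, gives exactly the diagram obtained from $\widehat D$ by the swap $\widehat X\widehat Y\to\widehat Y\widehat X$. This holds because restriction commutes with block interchange: the relative order of the surviving letters is $\widehat Y\widehat X$ within the gap and unchanged elsewhere.

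One point needs a check: the gap word of $w$ in $\widehat D$ is the restriction of its gap word in $D$. This is immediate, because deleting feet does not change betweenness of the others. There is no real obstacle beyond the observation that $b_n$ contributes at most one foot to any other bump's gap word, which is what makes closedness automatic wherever $L_n$ is placed.
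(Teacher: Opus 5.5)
Your proposal is correct and matches the paper's proof. Both observe that $R_n$, being last, is in no gap word, so $L_n$ is the only extra letter. Both place $L_n$ in whichever block contains it (either block if it sits at the cut), and both conclude that the extended cut is closed because $b_n$ owns only one foot in the gap word, with the interchange restricting to the given swap on $\widehat D$.
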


\begin{proof}
The gap word of $w$ in $D$ is its gap word in $\widehat D$ with possibly
the foot $L_n$ inserted ($R_n$, the last foot of $D$, lies in no gap
word).  Cut the gap word of $w$ in $D$ at the position of the given cut,
putting $L_n$ --- if it lies in the gap word --- into whichever block
contains it (if it sits exactly at the cut, into either block).  The
owner $b_n$ of $L_n$ has no other foot in the gap word of $w$, so the
extended cut is closed, and interchanging its blocks restricts to the
given interchange on the feet of $\widehat D$.
\end{proof}

\begin{lemma}[The last bump stays attached]\label{lem:attached}
Let $D$ be a dynamical diagram whose last bump $b_n$ crosses at least
one bump, and let $D'$ be obtained from $D$ by a swap at a bump
$w\neq b_n$.  Then the last bump of $D'$ is still $b_n$, and $b_n$
crosses at least one bump of $D'$: either the set of crossers of $b_n$
is unchanged, or $w$ itself crosses $b_n$ in both $D$ and $D'$.
\end{lemma}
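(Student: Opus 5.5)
A swap at $w$ permutes only the feet strictly between $L_w$ and $R_w$, and these feet stay between $L_w$ and $R_w$. So I will work directly with the combinatorics of \cref{def:cut} and split into cases according to where the feet of $b_n$ lie relative to $\spn(w)$.

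\emph{The last bump is still $b_n$.} $R_n$ is the last foot of $D$, so it lies in no gap word, in particular not in that of $w$. The feet of $w$ themselves are untouched, and the permuted feet remain inside $(L_w,R_w)$, so $R_n$ still exceeds every foot. Hence it is still the last foot, and $b_n$ is the last bump of $D'$. Similarly, the relative order of $L_n$ with any foot $e$ is unchanged unless both $L_n$ and $e$ lie in the gap word of $w$.

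\emph{Case analysis on $L_n$.}

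\textbf{Case 1: $L_n$ is not in the gap word of $w$.} Then $L_n$ keeps its position relative to every foot. A bump $x$ crosses $b_n$ iff $L_x<L_n<R_x$, because $R_n$ is last. The truth of this condition depends only on the order of $L_n$ against $L_x$ and $R_x$, which is unchanged. So the set of crossers is unchanged.

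\textbf{Case 2: $L_n$ lies in the gap word of $w$.} Then $L_w<L_n<R_w<R_n$, so $w$ crosses $b_n$ in $D$. The feet of $w$ do not move and $L_n$ remains strictly between them, so $L_w<L_n<R_w$ still holds in $D'$. Hence $w$ crosses $b_n$ in $D'$ as well.

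\emph{Conclusion.} In both cases $b_n$ crosses at least one bump of $D'$. In Case 1 this uses the hypothesis that $b_n$ has a crosser in $D$; in Case 2 the crosser $w$ is exhibited directly.

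The only point needing care is that the $R$-order numbering might change under the swap. This is harmless, since only the identity of the bump owning the last foot matters. I expect no real obstacle; the key observation is that $R_n$ is outside every gap word.
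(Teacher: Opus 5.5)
Your proof is correct and follows the paper's argument essentially step for step. You show that $R_n$ stays last because the swap only permutes feet inside $(L_w,R_w)$. You then split on whether $L_n$ lies in the gap of $w$: if not, the crosser set (characterized by $L_x<L_n<R_x$) is unchanged; if so, $w$ itself is a crosser, since its feet do not move.
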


\begin{proof}
A swap at $w$ permutes feet inside the position interval
$(L_w,R_w)$, which lies to the left of $R_n$; so $R_n$ remains the last
foot.  If $L_n\notin(L_w,R_w)$, then no foot changes sides with
respect to $L_n$ or $R_n$, so the set of crossers of $b_n$ --- the bumps
with right foot in $(L_n,R_n)$ and left foot before $L_n$ --- is
unchanged, and it is non-empty by hypothesis.  If $L_n\in(L_w,R_w)$,
then $L_w<L_n<R_w<R_n$, so $w$ crosses $b_n$; and since the feet
$L_w,R_w$ of the bump at which the swap is performed do not move, $w$
still crosses $b_n$ after the swap.
\end{proof}

\begin{theorem}[Peelability]\label{thm:peelable}
Every irreducible dynamical diagram is swap-equivalent to a peelable
diagram.  Consequently, by \cref{cor:swap-groups}, for every
irreducible fast set $B$ of $n$ positive bumps there is a fast set $B'$
of $n$ positive bumps realizing a peelable diagram with
$\langle B'\rangle=\langle B\rangle$.
\end{theorem}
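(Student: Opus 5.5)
We argue by induction on the number $n$ of bumps, proving the combinatorial statement that every irreducible dynamical diagram on $n$ bumps is swap-equivalent to a peelable one. The second sentence of the theorem then follows at once from \cref{cor:swap-groups}, because each swap is realized on the fast set by \cref{lem:gap-swap} without changing the generated group. For $n\le 2$ there is nothing to prove: an irreducible diagram on two bumps is a crossing pair, and this is peelable by \cref{lem:recognize}.

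For the inductive step, let $D$ be irreducible with $n\geq3$ bumps in $R$-order and last bump $b_n$. We first arrange that the deleted diagram $\widehat D$ is irreducible. If $\GB(\widehat D)$ is disconnected, apply the Merge Lemma (\cref{lem:merge}). It produces, by a single swap at $b_n$, an irreducible diagram $D^\ast$ with $\GB(\widehat{D^\ast})$ connected. We must check that $b_n$ is still the last bump of $D^\ast$. This holds because the swap permutes only feet strictly between $L_n$ and $R_n$, so $R_n$ remains the last foot and the $R$-order of the other bumps is determined by the resulting order. Renaming $D^\ast$ as $D$, we may therefore assume $\widehat D$ is irreducible on $n-1\geq2$ bumps.

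By the induction hypothesis there is a sequence of swaps carrying $\widehat D$ to a peelable diagram $E$. Lift each swap to $D$ using the Free Lifting Lemma (\cref{lem:free-lift}). Each lifted swap is performed at a bump $w\neq b_n$ and acts on the feet of $\widehat D$ exactly as the original swap. The foot $L_n$ merely rides along inside one block, and $R_n$ is never moved. By \cref{lem:attached}, applied at each step, $b_n$ remains the last bump and keeps crossing at least one other bump; the hypothesis of that lemma is propagated because its conclusion is exactly the hypothesis again. Call the final diagram $D'$. Its deleted diagram is $E$, which is peelable, so every prefix $P_k$ of $D'$ with $2\le k\le n-1$ is irreducible. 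The full diagram $P_n=D'$ is irreducible because $E$ is connected and $b_n$ crosses some bump of $E$, which is the criterion of \cref{lem:recognize}. Hence $D'$ is peelable.

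The main obstacle is the bookkeeping in the lifting step, namely that the prefixes of $D'$ in $R$-order really are the prefixes of $E$. A swap at $w$ permutes feet inside $\spn(w)$ and may reorder right feet, so the $R$-order can change. The point to verify is that the lifted swaps induce on the feet of $\widehat D$ precisely the order of $E$, so the relative $R$-order of $b_1,\dots,b_{n-1}$ in $D'$ is that of $E$, while $R_n$ stays last. Everything else reduces directly to the cited lemmas.
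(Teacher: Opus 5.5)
Your proposal is correct and follows the paper's proof essentially step for step: induction on $n$, a single Merge Lemma swap at $b_n$ to make $\widehat D$ irreducible, then lifting the inductive swap sequence via \cref{lem:free-lift} and using \cref{lem:attached} to keep $b_n$ last and crossing some bump. The bookkeeping point you flag is settled exactly as in the paper. Since $b_n$ stays last and each lifted swap restricts to the given swap on the current deleted diagram, deleting $b_n$ from $D'$ gives $E$, so the first $n-1$ prefixes of $D'$ are those of $E$.
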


\begin{proof}
Induction on $n$.  For $n\leq2$ every irreducible diagram is already
peelable.  Let $n\geq3$ and let $D$ be irreducible with last bump
$b_n$.

\emph{Step 1 (merge).}
If $\GB(\widehat{D})$ is disconnected, apply the single swap of
\cref{lem:merge}; rename the result $D$.  In either case $\widehat{D}$
is now irreducible (with $n-1\geq2$ bumps) and $b_n$ crosses at least one
bump: after a merge this is part of \cref{lem:merge}, and if no merge
was needed it holds because $b_n$ is not an isolated vertex of the
connected graph $\GB(D)$.

\emph{Step 2 (recurse and lift).}
By the induction hypothesis there is a finite sequence of swaps
transforming $\widehat{D}$ into a peelable diagram $\widehat{D}'$.
Lift the sequence, swap by swap, to $D$ by \cref{lem:free-lift}; let
$D'$ be the resulting diagram.  By construction, deleting $b_n$ from
$D'$ gives $\widehat{D}'$.

\emph{Step 3 (conclusion).}
Each lifted swap is performed at a bump other than $b_n$, so by
\cref{lem:attached}, applied once per swap, $b_n$ remains last and
crosses at least one other bump in $D'$.  Since $b_n$ remains last, the
first $n-1$ bumps of $D'$ in the current $R$-order are precisely the
bumps of $\widehat{D}'$ in their $R$-order.  Hence the first $n-1$
prefixes of $D'$ are the prefixes of $\widehat{D}'$, which are
irreducible since $\widehat{D}'$ is peelable; and the $n$-th prefix,
$D'$ itself, is irreducible because $\GB(\widehat{D}')$ is connected
and $b_n$ is joined to it.  By \cref{def:peelable}, $D'$ is peelable.
\end{proof}

\section{Cores and dilated cores of \texorpdfstring{$F_n$}{Fn}}\label{sec:cores}

In this section we first recall the core of a diagram group, a
construction due to Guba and Sapir which first appeared in the work of
the author and M.~Sapir \cite{GolanSapirTAMS}, and present the core of
$F_n$ as a directed $2$-complex;
its diagram group is $F_n$.  We then introduce \emph{dilated cores}, the
complexes at which the reduction of \cref{sec:reduction} will arrive, and
show that their diagram groups are all isomorphic to $F_n$.

\subsection{The core of a diagram group}\label{subsec:core}

Let $K$ be a directed $2$-complex, let $p$ be a non-empty base path, and
let $H\leq\DG(K,p)$ be generated by reduced spherical diagrams
$\Delta_1,\dots,\Delta_s$.  The \emph{(Stallings) $2$-core}
$\mathcal L(H)$ is obtained by identifying the top and bottom paths of
each $\Delta_i$, wedging the resulting complexes along the images of
their base paths, and repeatedly folding equally labelled cells that
share their top or bottom paths \cite[\S3]{GolanSapirTAMS}.  The common
image of the wedged base paths is the base path $p_H$ of the core.  The
result is independent of the order of the foldings and of the chosen
finite generating set \cite[Lemma~3.20 and Remarks~3.21, 3.23]
{GolanSapirTAMS}.

The \emph{closure} $\Cl(H)$ consists of the elements of $\DG(K,p)$ whose
reduced diagrams are accepted by $\mathcal L(H)$; the subgroup $H$ is
\emph{closed} if $H=\Cl(H)$.  The immersion of the core into $K$
induces an isomorphism
\[
\Cl(H)\cong\DG\bigl(\mathcal L(H),p_H\bigr).
\]
Consequently every closed subgroup is isomorphic to the diagram group of
its core.  We use this in the form established for $F_n$ in
\cite{GGSFn}.

\subsection{The core of \texorpdfstring{$F_n$}{Fn}}\label{subsec:coreFn}

By \cref{thm:Fn-diagram}, $F_n$ is the diagram group of the one-vertex,
one-edge complex $\langle x\mid x=x^n\rangle$ with base path $x$.  We
first define the directed $2$-complex that will be its core.

\begin{definition}[The complex $K_n^{+}$]\label{def:coreFn}
Fix $n\geq2$ and read indices modulo $n-1$.  The directed $2$-complex
$K_n^{+}$ has the following positive cells and $1$-skeleton.
\begin{itemize}
\item \textbf{Vertices:} $\io$, $\ta$, and cycle vertices
$0,1,\dots,n-2$.
\item \textbf{Edges:} cycle edges $e_t\colon t\to t+1$ for
$t\in\Z/(n-1)$; a left edge $\eL\colon\io\to1$; a right edge
$\eR\colon0\to\ta$; and a base edge $r\colon\io\to\ta$.
\item \textbf{Positive cells:}
\[
r=\eL e_1e_2\cdots e_{n-2}\eR,
\qquad
\eL=\eL e_1e_2\cdots e_{n-2}e_0,
\qquad
\eR=e_0e_1\cdots e_{n-2}\eR,
\]
\[
e_t=e_t\,e_{t+1}e_{t+2}\cdots e_{n-2}e_0e_1\cdots e_t
\qquad(t\in\Z/(n-1)).
\]
\end{itemize}
For $n=2$ there is one cycle vertex $0$, both boundary edges attach to
$0$, there is one cycle edge $e_0$ (a loop), and the cells are
\[
r=\eL\eR,\qquad \eL=\eL e_0,\qquad
\eR=e_0\eR,\qquad e_0=e_0e_0.
\]
\end{definition}

\begin{theorem}\label{thm:core-Fn}
For every $n\geq2$,
\[
F_n\cong\DG\bigl(K_n^{+},r\bigr).
\]
\end{theorem}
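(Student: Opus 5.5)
The plan is to compare $K_n^{+}$ directly with the one-vertex complex $\langle x\mid x=x^n\rangle$ of \cref{thm:Fn-diagram}, using the label map that sends every edge of $K_n^{+}$ to $x$ and every positive cell to the cell $x\to x^n$. This is exactly the immersion of the core into the ambient complex. Since the subgroup in question is the whole group $F_n=\DG(\langle x\mid x=x^n\rangle,x)$, it is trivially closed, and the general principle recalled in \cref{subsec:core} already suggests the result. I would nevertheless prove it by hand: the label map induces a bijection between reduced $(r,r)$-diagrams over $K_n^{+}$ and reduced $(x,x)$-diagrams over $\langle x\mid x=x^n\rangle$, and this bijection respects concatenation.

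First, I would check that the label map is well defined. Each positive cell of $K_n^{+}$ has a single-edge top path and a bottom path of length $n$ (for $r$, $\eL$ and $\eR$ this is $1+(n-2)+1$; for $e_t$ it is $1+(n-1)$). So the map sends diagrams to diagrams, dipoles to dipoles and concatenations to concatenations. It therefore induces a groupoid morphism, and in particular a homomorphism $\DG(K_n^{+},r)\to F_n$.

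Next, I would record two structural facts about $K_n^{+}$.
\begin{enumerate}
\item $K_n^{+}$ is tree-like (\cref{def:treelike}), and \emph{every} edge is active, being the top edge of exactly one positive cell. Hence any labelled cell over $\langle x\mid x=x^n\rangle$ whose top edge already has a lift lifts uniquely, and the same holds for inverse cells with respect to their top paths.
\item Every $1$-path of $K_n^{+}$ from $\io$ to $\ta$ is determined by its length. Such a path is either $r$, or it starts with $\eL$, then follows the cycle edges deterministically from vertex $1$ (each cycle vertex has exactly one outgoing cycle edge), and must end with $\eR$ leaving vertex $0$. The only freedom is how many full turns of the cycle are made, so the length fixes the path.
\end{enumerate}

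Given these facts, surjectivity goes as follows. Take a reduced spherical diagram $\Delta$ over $\langle x\mid x=x^n\rangle$ and split it along its longest directed path into a positive upper part and a negative lower part, as in the discussion after \cref{thm:Fn-diagram}. Lift the upper part cell by cell starting from the top edge labelled $r$; fact (1) makes each step unique. Lift the lower part, reflected, in the same way starting from the bottom edge $r$. Both lifts of the middle path are $1$-paths from $\io$ to $\ta$ of the same length, so they coincide by fact (2), and the two halves glue to an $(r,r)$-diagram over $K_n^{+}$. Injectivity follows from the uniqueness of lifts: a spherical diagram over $K_n^{+}$ is determined by its label image together with the lift of its top path, and a dipole upstairs is exactly a dipole downstairs, so reduced diagrams map to reduced diagrams. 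Hence the homomorphism is an isomorphism onto $F_n$.

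The main obstacle is making the lifting argument airtight for arbitrary reduced diagrams rather than only upper and lower halves, since cells may be attached in any order. I would handle this by lifting along a decomposition of $\Delta$ into atomic diagrams. At each step, the current path upstairs has a lift, and the next atomic factor applies a cell or an inverse cell to a subpath. For a positive cell, the lift is forced by fact (1). For an inverse cell, which contracts a length-$n$ subpath to a single edge, I need that subpath to be the bottom path of some cell of $K_n^{+}$. I would try to obtain this by the same half-splitting and length argument, with fact (2) guaranteeing consistency at the end.
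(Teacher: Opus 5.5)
Your argument is correct, and it takes a different route from the paper. The paper's proof is a citation: by \cite[Remark~5.2]{GGSFn}, $K_n^{+}$ is the Stallings $2$-core of $F_n$ viewed as a subgroup of $\DG(\langle x\mid x=x^n\rangle,x)$. Since the whole group is closed, the general isomorphism $\Cl(H)\cong\DG(\mathcal L(H),p_H)$ of \cref{subsec:core} then gives the claim.

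You instead verify by hand that the immersion $K_n^{+}\to\langle x\mid x=x^n\rangle$ induces an isomorphism. In effect you re-prove this special case of the core theorem from three features of $K_n^{+}$:
\begin{enumerate}
\item every edge is the top edge of exactly one cell;
\item distinct cells have distinct bottom paths;
\item an $\io$-to-$\ta$ path is determined by its length.
\end{enumerate}
The paper's route is shorter and places $K_n^{+}$ inside the core framework that motivates dilated cores. Yours is self-contained and shows exactly why the immersion is bijective on reduced spherical diagrams.

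There are three points to tighten.

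First, injectivity needs that a dipole in the image is already a dipole upstairs. When the upper face is positive, the shared path is a length-$n$ bottom path, so you need feature~(2). You use it without stating it; it is clear from the first and last letters of the bottom paths.

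Second, the ``main obstacle'' in your last paragraph is not an obstacle. The splitting along the longest path, recalled after \cref{thm:Fn-diagram}, holds for every reduced spherical diagram over $\langle x\mid x=x^n\rangle$. So your half-splitting argument already covers the whole group. After reflecting the lower half you only ever lift positive cells, so the remark about inverse cells in your fact~(1) is unnecessary.

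Third, if you prefer the atomic route, the fact you need is a direct check, not a splitting argument. Every length-$n$ subpath of an $\io$-to-$\ta$ path $\eL W\eR$ is the bottom path of exactly one cell:
\begin{itemize}
\item $\botpath r$ or $\botpath{\eL}$ if it begins with $\eL$;
\item $\botpath{\eR}$ if it ends with $\eR$ but does not begin with $\eL$;
\item $\botpath{e_t}$ if it is a cycle walk starting at $t$.
\end{itemize}
Hence every atomic factor lifts uniquely and all intermediate paths remain $\io$-to-$\ta$ paths. By length rigidity, the final path is $r$.
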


\begin{proof}
By \cite[Remark~5.2]{GGSFn}, the complex $K_n^{+}$ is the core of $F_n$
as a subgroup of itself.  The whole diagram group is closed, so the
claim follows from the core construction.
\end{proof}

The base edge $r$ occurs only in its own positive cell, and the other
path of that cell avoids $r$.

\begin{corollary}\label{cor:standard-core}
Let $K_n$ be obtained from $K_n^{+}$ by deleting $r$ and its positive
cell.  Then
\[
F_n\cong\DG\bigl(K_n,\eL e_1\cdots e_{n-2}\eR\bigr).
\]
\end{corollary}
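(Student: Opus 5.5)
The plan is to apply the Guba--Sapir move (M1)$^{-1}$ of \cref{thm:GSmoves} directly to $K_n^{+}$, in the form of \cref{lem:transport}, and then combine the result with \cref{thm:core-Fn}.

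First we check the hypotheses of (M1)$^{-1}$ for the edge $x=r$. The relevant positive cell is $f\colon r\to w$ with $w=\eL e_1\cdots e_{n-2}\eR$.

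\begin{itemize}
\item The path $w$ avoids $r$, since it consists only of the left edge, the cycle edges $e_1,\dots,e_{n-2}$ and the right edge.
\item The edge $r$ occurs in no top or bottom path of any other positive cell. This is read off \cref{def:coreFn}: the other cells have top paths $\eL,\eR,e_t$ and bottom paths built from $\eL,\eR,e_t$ only. This is exactly the remark made just before the corollary.
\end{itemize}

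For $n=2$ the same holds with $w=\eL\eR$.

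Next we compute the edge map. Let $\Sigma$ be the one-move GS sequence deleting $r$ and $f$. By \cref{def:edge-map}, $\phi_\Sigma(r)=w$. \cref{lem:transport} then gives
\[
\DG(K_n^{+},r)\cong\DG\bigl(K_n,\eL e_1\cdots e_{n-2}\eR\bigr).
\]
Here $w$ is a genuine $1$-path of $K_n$:
\begin{itemize}
\item $\eL$ runs from $\io$ to $1$;
\item the edges $e_1,\dots,e_{n-2}$ run $1\to2\to\cdots\to n-2\to n-1=0$ modulo $n-1$;
\item $\eR$ runs from $0$ to $\ta$.
\end{itemize}
The deletion does not change the vertex set, so all these vertices remain.

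Finally, combining this isomorphism with \cref{thm:core-Fn} gives the claim.

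There is no real obstacle. The only points needing care are confirming that $r$ appears nowhere else and that the composite path is readable in $K_n$.
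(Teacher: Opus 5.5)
Your proposal is correct and follows the paper's proof: delete $r$ and its cell by (M1)$^{-1}$, use \cref{lem:transport} to send the base path $r$ to $\eL e_1\cdots e_{n-2}\eR$, and combine with \cref{thm:core-Fn}. Your separate check that this path is readable in $K_n$ is harmless but not needed, since the edge mapping preserves endpoints.
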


\begin{proof}
Delete $r$ by (M1)$^{-1}$ and apply \cref{lem:transport} to the base
path.
\end{proof}

We call $K_n$ the \emph{standard core} of $F_n$.  It is tree-like with
all edges active, $\io$ is its unique source, $\ta$ its unique sink, and
its inner $1$-skeleton is a directed simple cycle through all cycle
vertices.

\subsection{Dilated cores}\label{subsec:dilated}

\begin{definition}[Dilated core]\label{def:dilated-core}
Fix $n\geq2$ and $k\geq0$.  A \emph{dilated core of $F_n$ with $k$ dummy
edges} is a directed $2$-complex $K$ of the following shape.
\begin{itemize}
\item \textbf{Vertices:} an initial vertex $\io$, a terminal vertex
$\ta$, and a set $\Vin$ of $n+k-1$ \emph{inner} vertices.
\item \textbf{Edges:} a set of $n-1+k$ \emph{inner} edges forming a
single directed simple cycle through all of $\Vin$ (for
$|\Vin|=1$, a single loop), partitioned into $n-1$ \emph{active} edges
and $k$ \emph{dummy} edges; a \emph{left edge} $\eL\colon\io\to u$ and a
\emph{right edge} $\eR\colon v\to\ta$ for some $u,v\in\Vin$ (the
\emph{attachments}).
\item \textbf{$2$-cells:} writing $C(w)$ for the positive path
traversing the full inner cycle once, starting and ending at
$w\in\Vin$, the cells are
\[
e=e\cdot C(\ta(e))\ \ \text{for every active inner edge }e,\qquad
\eL=\eL\cdot C(u),\qquad
\eR=C(v)\cdot\eR .
\]
Dummy edges are the top edge of no cell, so $K$ is tree-like and the
terminology agrees with \cref{def:treelike}.
\end{itemize}
A dilated core with $k=0$ is called a \emph{shifted core}.  The standard
core $K_n$ is the shifted core with attachments $u=1$, $v=0$ (indices
modulo $n-1$; for $n=2$ the cycle is a single loop at the vertex $0$ and
$u=v=0$).
\end{definition}

The standard core and a typical dilated core are illustrated in
\cref{fig:cores}.

\begin{remark}[Cycle convention]\label{rem:cycle-convention}
On a simple cycle the two natural forms of the inner relation coincide
letter by letter: if $e$ is an inner edge then
\[
e\cdot C(\ta(e))\;=\;C(\io(e))\cdot e
\]
as positive words, both spelling ``$e$, then once around the cycle''.
We use this identity silently.
\end{remark}

\begin{remark}[Determinism]\label{rem:determinism}
In a directed simple cycle every vertex has exactly one outgoing cycle
edge, so a positive path in the cycle is completely determined by its
initial vertex and its length.  Consequently, whenever a word $W$ in the
edges of the cycle is known to spell a path from $a$ to $b$ of length
$\ell$, then $W$ is \emph{letter-for-letter equal} to the unique such
path; if $\ell=\ell_0+mL$, where $\ell_0$ is the length of the simple
path $a\to b$ and $L$ the length of the cycle, then $W$ equals that
simple path preceded (or followed) by $m$ full cycles, however $W$ was
produced.  Likewise, a closed positive walk of length $L$ based at a
vertex $w$ of the cycle is letter-for-letter equal to $C(w)$.
\end{remark}

\begin{figure}[ht]
\centering
\begin{tikzpicture}[scale=1.0, every node/.style={font=\small},
    active/.style={->, thick}, dummy/.style={->, thick, dashed}]
  \begin{scope}
    \foreach \a/\i in {90/0, 162/1, 234/2, 306/3, 18/4} {
      \node[circle,fill,inner sep=1.3pt,label={\a:$\i$}] (v\i) at (\a:1.2) {};
    }
    \foreach \s/\t in {0/1,1/2,2/3,3/4,4/0} { \draw[active] (v\s) to[bend right=25] (v\t); }
    \node[circle,fill,inner sep=1.3pt,label=left:$\io$] (i) at (-2.9,0.8) {};
    \node[circle,fill,inner sep=1.3pt,label=right:$\ta$] (t) at (2.9,0.8) {};
    \draw[active] (i) -- (v1) node[midway,above] {$\eL$};
    \draw[active] (v0) -- (t) node[midway,above] {$\eR$};
    \draw[active] (i) to[bend left=35] node[midway,above] {$r$} (t);
    \node at (0,-2.1) {the complex $K_6^{+}$ ($n=6$)};
  \end{scope}
  \begin{scope}[xshift=8.05cm]
    \foreach \a/\i in {90/0, 135/1, 180/2, 225/3, 270/4, 315/5, 0/6, 45/7} {
      \node[circle,fill,inner sep=1.3pt] (w\i) at (\a:1.35) {};
    }
    \draw[active] (w0) to[bend right=18] (w1);
    \draw[dummy]  (w1) to[bend right=18] (w2);
    \draw[active] (w2) to[bend right=18] (w3);
    \draw[dummy]  (w3) to[bend right=18] (w4);
    \draw[dummy]  (w4) to[bend right=18] (w5);
    \draw[active] (w5) to[bend right=18] (w6);
    \draw[active] (w6) to[bend right=18] (w7);
    \draw[active] (w7) to[bend right=18] (w0);
    \node[circle,fill,inner sep=1.3pt,label=left:$\io$] (i2) at (-2.75,0.8) {};
    \node[circle,fill,inner sep=1.3pt,label=right:$\ta$] (t2) at (2.75,0.8) {};
    \draw[active] (i2) -- (w1) node[midway,above] {$\eL$};
    \draw[active] (w6) -- (t2) node[midway,above] {$\eR$};
    \node at (0,-2.2) {a dilated core with $n=6$, $k=3$ (dummies dashed)};
  \end{scope}
\end{tikzpicture}
\caption{The complex $K_6^{+}$ (left) and a dilated core of $F_6$
(right).
Active edges carry the relation ``once around the cycle''; dummy edges
carry no relation.}
\label{fig:cores}
\end{figure}

\begin{lemma}[Path equivalence in a dilated core]\label{lem:dilated-paths}
Let $K$ be a dilated core and let $p_1,p_2$ be $1$-paths with the same
initial and terminal vertices.  If neither is empty or consists entirely
of dummy edges, then $p_1$ and $p_2$ are homotopic.  In particular, any
two $1$-paths from $\io$ to $\ta$ are homotopic.
\end{lemma}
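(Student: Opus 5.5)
We prove the lemma by reducing every admissible $1$-path to a normal form determined only by its endpoints and its length, and then showing that the length can be adjusted freely in steps of the cycle length. Let $L=n-1+k$ be the length of the inner cycle. Homotopy is an equivalence relation: diagrams can be concatenated and reflected. So it suffices to connect each admissible path to a fixed representative of its endpoint class.

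\textbf{Step 1: the shape of paths.} Since $\io$ has no incoming edges and $\ta$ has no outgoing edges, every $1$-path has one of four forms:
\begin{itemize}
\item a word $W$ in the inner cycle;
\item $\eL W$;
\item $W\eR$;
\item $\eL W\eR$.
\end{itemize}
Here $W$ is a path in the cycle between the appropriate vertices. By \cref{rem:determinism}, $W$ is determined letter for letter by its initial vertex and its length. So two paths with the same endpoints and the same form differ only in the number $m\ge0$ of full cycles contained in $W$. Two paths with the same endpoints automatically have the same form, because the endpoints decide whether $\io$ or $\ta$ occurs.

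\textbf{Step 2: adding one full cycle.} It suffices to show that an admissible path $p$ is homotopic to the path $p^{+}$ obtained by inserting one extra copy of $C(w)$ at a suitable vertex $w$. By Step 1, any two admissible paths with the same endpoints are then both homotopic to the one with the larger value of $m$.

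\begin{itemize}
\item If $p$ contains $\eL$, apply the cell $\eL=\eL\cdot C(u)$.
\item If $p$ contains $\eR$, apply the cell $\eR=C(v)\cdot\eR$.
\item Otherwise $p=W$ is a nonempty path in the cycle which, by hypothesis, contains an active edge $e$. Apply $e\to e\cdot C(\ta(e))$ at that occurrence.
\end{itemize}

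In each case a single atomic diagram turns $p$ into a path with the same endpoints whose cycle part is longer by exactly $L$. By determinism, this path equals $p^{+}$ letter for letter, wherever the cycle was inserted; \cref{rem:cycle-convention} is what makes the insertion point irrelevant. The result is still admissible, so the step can be iterated.

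\textbf{Step 3: the case of $\io$ to $\ta$.} A path from $\io$ to $\ta$ contains $\eL$, so it is neither empty nor purely dummy. The final assertion of the lemma therefore follows from Steps 1 and 2.

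\textbf{Main obstacle.} The delicate point is Step 1's claim that matching endpoints forces the two cycle words to differ by a multiple of $L$ and nothing else. This uses that the inner graph is a single directed simple cycle, including the degenerate case $|\Vin|=1$ with a loop. I would check that case separately: there all cycle words are powers of a single loop edge, so determinism is immediate.
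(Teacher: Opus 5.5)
Your proof is correct and follows essentially the paper's argument. You decompose each path as $A\,q\,B$ with $A\in\{\eL,\varepsilon\}$, $B\in\{\eR,\varepsilon\}$ and $q$ an inner-cycle path, observe that paths with matching endpoints differ only by full laps, and insert laps one at a time using the cell of $\eL$, $\eR$, or an active inner edge. Where the paper slides each inserted lap to the end of the inner segment via $x\,C(\ta(x))=C(\io(x))\,x$, you invoke determinism of cycle paths, which amounts to the same observation.
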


\begin{proof}
Write $p_i=Aq_iB$, where $A\in\{\eL,\varepsilon\}$,
$B\in\{\eR,\varepsilon\}$, and $q_1,q_2$ are inner paths with common
endpoints.  These factors are forced by the endpoints.  Assume
$|q_1|\geq|q_2|$, and let $w$ be their common terminal vertex.  Since the
inner $1$-skeleton is a directed simple cycle, the two paths differ by
full laps:
\[
q_1=q_2C(w)^m
\]
for some $m\geq0$.

For an inner edge $x\colon i\to j$, the word identity
$xC(j)=C(i)x$ from \cref{rem:cycle-convention} slides a full cycle past
$x$.  Since $p_2$ is not purely dummy, it contains an active inner edge,
$\eL$, or $\eR$.  Applying the positive cell of that edge inserts a full
cycle, and the word identity slides the cycle to the end of the inner
segment.  Repeating gives
\[
p_2\simeq Aq_2C(w)^mB=p_1.
\]
Every $\io$-to-$\ta$ path contains $\eL$, so the final assertion follows.
\end{proof}

\subsection{Sliding the boundary attachments}

\begin{proposition}[Attachment sliding]\label{prop:sliding}
Let $K$ be a dilated core of $F_n$ with $k$ dummy edges and attachments
$(u,v)$, and let $K'$ be identical except for attachments $(u',v')$.
Then $K$ and $K'$ are GS-equivalent.
\end{proposition}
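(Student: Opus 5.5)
It suffices to move one attachment by one step along the inner cycle, since any $(u',v')$ is reached from $(u,v)$ by finitely many such steps, moving $u$ and $v$ independently. By the left--right symmetry of the definition (reversing all edges and reading cells backwards exchanges the roles of $\eL$ and $\eR$), we only describe moving the left attachment from $u$ to $u^+:=\ta(c)$, where $c\colon u\to u^+$ is the inner edge leaving $u$. The move backwards from $u^+$ to $u$ is obtained by running the same GS sequence in reverse, since (M1) and (M1)$^{-1}$ are mutually inverse and an (M2) rewriting by the same cell undoes itself. Because the inner cycle is simple, forward steps alone already reach every vertex.

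\textbf{Plan of the forward step.}
\begin{enumerate}
\item By (M1), adjoin a fresh edge $\eL'\colon\io\to u^+$ with the cell $\eL'\to\eL c$.
\item We want to rewrite the cell of $\eL$ in terms of $\eL'$. Its bottom path is $\eL C(u)$, and $C(u)$ begins with $c$, so $\eL C(u)=\eL c\,C'$, where $C'$ is the path of length $L-1$ from $u^+$ back to $u$. By (M2) using the new cell, rewrite this bottom path to $\eL' C'$. The cell now reads $\eL=\eL'C'$.
\item Next we manufacture the cell $\eL'=\eL' C(u^+)$. Among the current cells we have $\eL'\to\eL c$, and by rewriting the top path of this cell we can try to obtain
\[
\eL c\to \eL' C' c=\eL' C(u^+).
\]
The formal move would be an (M2) on the cell $\eL'\to\eL c$ using the rewritten cell $\eL\to\eL'C'$, replacing the occurrence of $\eL$ in its bottom path. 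This gives $\eL'\to\eL'C'c=\eL'C(u^+)$, which is exactly the desired relation for the new left edge.
\item Now $\eL$ occurs only in its own cell $\eL\to\eL'C'$, whose bottom path avoids $\eL$. Delete it by (M1)$^{-1}$. The result is the dilated core with left edge $\eL'$ attached at $u^+$, the right attachment and all inner cells untouched.
\end{enumerate}

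\textbf{Main obstacle.} The delicate point is the legality and bookkeeping in step~3. One must check that (M2) is applied between two distinct positive cells, and that after step~2 no other cell still mentions $\eL$. The inner cells never involve $\eL$, and the $\eR$-cell involves only inner edges and $\eR$, so this holds, and the hypotheses of (M1)$^{-1}$ are met.

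There is also a degenerate case: if $|\Vin|=1$, the cycle is a single loop, $u=u^+$, and there is nothing to prove. When $c$ is a dummy edge the argument is unchanged, because no step uses a cell of $c$ itself.

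Finally, for the right attachment apply the mirrored sequence. Adjoin $\eR'\colon u^-\to\ta$ with the cell $\eR'\to c'\eR$, where $c'$ is the inner edge entering $v$, and rewrite symmetrically. This moves $v$ backwards one step at a time, which again reaches every vertex around the cycle.
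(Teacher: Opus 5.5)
Your proof is correct and is essentially the paper's argument. You adjoin a new boundary edge equal to the old one composed with a cycle arc, use it to shorten the old edge's cell, substitute back to obtain the full-cycle relation for the new edge, and delete the old edge by (M1)$^{-1}$. The paper does this for the right attachment moving forward with the long arc $P$; you do it for the left attachment moving forward with the single edge $c$, which is the same construction. Two small slips. First, $\eL$ disappears from all other cells only after step~3, not after step~2. Second, in the mirrored right-hand step the new edge should start at $v^-$, not $u^-$.
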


\begin{proof}
It suffices to slide one attachment one step along the cycle.  We treat
the right attachment; the left attachment is symmetric.  If the inner
cycle has length $1$ there is nothing to prove.  Otherwise let $g$ be
the cycle edge leaving $v$, put $v^{+}=\ta(g)$, and let $P$ be the cycle
path from $v^{+}$ to $v$, so that
$C(v)=gP$ and $C(v^{+})=Pg$.  Adjoin, by (M1), a new edge
$\eR'\colon v^{+}\to\ta$ with defining positive cell
\[
\eR'=P\,\eR .
\]
The positive cell of $\eR$ reads $\eR=C(v)\,\eR=g\,P\,\eR$; by (M2), replace the
occurrence of $P\eR$ by $\eR'$, obtaining $\eR=g\,\eR'$.  Now apply
(M2) to the defining cell of $\eR'$, substituting $g\eR'$ for $\eR$:
\[
\eR'=P\,\eR \;\longrightarrow\; \eR'=P\,g\,\eR'=C(v^{+})\,\eR' .
\]
At this point $\eR$ occurs in exactly one $2$-cell, $\eR=g\eR'$, whose
other side avoids $\eR$; delete $\eR$ by (M1)$^{-1}$.  The result is
exactly the dilated core with attachments $(u,v^{+})$.  Note that no positive cell of the edge $g$ was used, so the argument is indifferent to
whether $g$ is active or dummy.
\end{proof}

\subsection{Contracting the dummy edges}

Dummy edges are the top edge of no $2$-cell, so a GS sequence can never
eliminate them --- at most exchange one for a fresh parallel edge with
the same endpoints (see the discussion after \cref{def:treelike});
contracting them is an operation of a different nature, and we now show
directly that it does not change the diagram group.  We first
normalize the attachments, using \cref{prop:sliding}; this makes the
combinatorics of paths in the dilated core transparent.

Let $K$ be a dilated core with $k\geq1$ dummy edges.  A \emph{chain} is a maximal non-empty $1$-path of consecutive dummy
edges on the inner cycle; since
$n\geq2$ there is at least one active edge, so the chains are pairwise
disjoint directed simple paths, and every dummy edge lies on exactly one
chain.  We say the attachments $(u,v)$ are \emph{adapted} if the cycle
edge entering $u$ is active and the cycle edge leaving $v$ is active;
equivalently, if $u$ is not an interior or terminal vertex of a chain,
and $v$ is not an interior or initial vertex of a chain.  Adapted
attachments exist, since there is at least one active edge.

\begin{lemma}[Projection Lemma]\label{lem:projection}
Let $K$ be a dilated core of $F_n$ with at least one dummy edge and
adapted attachments $(u,v)$.  Let $K^c$ be obtained by contracting every
dummy edge, and let $\pi$ delete dummy letters from $1$-paths.  Then
$K^c$ is a shifted core of $F_n$, and for every $1$-path $p$ from $\io$
to $\ta$,
\[
\DG(K,p)\cong\DG\bigl(K^c,\pi(p)\bigr).
\]
\end{lemma}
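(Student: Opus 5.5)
The plan is to compare diagrams over $K$ and over $K^c$ directly, through a bijection between their $\io$-to-$\ta$ paths. No GS sequence can be used here, since GS moves never remove dummy edges.

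\emph{Step 1: $K^c$ is a shifted core.} Contracting each chain to a single vertex turns the inner cycle into a directed simple cycle of the $n-1$ active edges. The attachments $u,v$ become the images of their chains (or stay themselves). For an active edge $e$ we have $\pi(C(\ta(e)))=C^c(\ta(e))$, where $C^c$ is the cycle path of $K^c$. Hence the cells $e=eC(\ta(e))$, $\eL=\eL C(u)$ and $\eR=C(v)\eR$ project to exactly the cells of a shifted core. So $\pi$ maps cells to cells, and therefore every diagram over $K$ to a diagram over $K^c$, obtained by erasing dummy letters. Dipoles go to dipoles, so $\pi$ induces a homomorphism $\DG(K,p)\to\DG(K^c,\pi(p))$.

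\emph{Step 2: unique lifting of $\io$-to-$\ta$ paths.} I would show that $\pi$ is a bijection from $1$-paths $\io\to\ta$ in $K$ onto $1$-paths $\io\to\ta$ in $K^c$. Consider a path $\eL q\eR$. By \cref{rem:determinism}, each vertex of the cycle has a unique outgoing edge. So after an active edge, or after $\eL$, the path is forced to run through the entire chain beginning at the current vertex, if there is one, before its next active letter.

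Adaptedness is exactly what makes the two ends work. Since the edge entering $u$ is active, $u$ is either the initial vertex of a chain or lies on no chain, so the path after $\eL$ traverses a full chain. Since the edge leaving $v$ is active, $v$ is either the terminal vertex of a chain or lies on no chain, so a path that reaches $v$ and then takes $\eR$ has completed its last chain.

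Thus the lift $\lambda(w)$ of a $K^c$-path $w$ is obtained by inserting, after $\eL$ and after each active letter, the full chain that starts at its terminal vertex. This lift is unique, and $\pi\lambda=\mathrm{id}$, $\lambda\pi=\mathrm{id}$ on $\io$-to-$\ta$ paths.

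\emph{Step 3: lifting diagrams.} Next I would write a spherical diagram over $K^c$ with base $\pi(p)$ as a concatenation of atomic diagrams $(a,f,b)$. Every intermediate path runs from $\io$ to $\ta$, so each has a unique lift. I then check that the atomic step also lifts. The top of $f$ is a single active edge $e$ (or $\eL$ or $\eR$), and replacing it by its bottom path inside $\lambda(a\,e\,b)$ gives exactly $\lambda(a\,\botpath f\,b)$. This holds because the chains inserted by $\lambda$ around $e$ coincide with the chains inside $C(\ta(e))$ (respectively $C(u)$, $C(v)$), again by determinism. The inverse cells are handled the same way. So every diagram over $K^c$ lifts to a diagram over $K$ with base $\lambda\pi(p)=p$, the lift is inverse to $\pi$, and a dipole over $K^c$ lifts to a dipole over $K$. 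Hence $\pi$ is a bijection on reduced diagrams and a group isomorphism.

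The main obstacle is the bookkeeping in Step 3. One must verify that the chain insertions are compatible with each cell application: occurrences of $e$ in the lifted word must correspond exactly to occurrences in the projected word, and a chain must never be split by the boundary of a cell occurrence. I would settle this by proving that every subword $x\,(\text{chain})$ that follows an active letter $x$ in a lifted path is rigid, meaning any cell occurrence either contains the whole block or avoids it, because cell boundaries occur only at active letters.
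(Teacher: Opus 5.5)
This is essentially the paper's proof: your forced-chain lifts are its \emph{taut} paths. Like you, it contracts faces of diagrams, proves a bijection between occurrences of a cell path in $q$ and in $\pi(q)$, and lifts atomic diagrams and dipoles; it also checks that contraction creates no degenerate face or directed cycle, which your Step 1 takes for granted. One slip: the block that no cell occurrence can split is the dummy chain alone, not $x$ followed by its chain, since an occurrence of $e\,C(\ta(e))$ ends at $e$ and omits the chain that follows it. The chain is unsplittable because both paths of every cell begin and end with non-dummy letters, and for $\eL C(u)$ and $C(v)\eR$ this is exactly where adaptedness is used.
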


\begin{proof}
Contracting the dummy chains turns the inner cycle into the directed
simple cycle formed by the $n-1$ active inner edges.  Each positive cell
$f$ of $K$ contracts to a corresponding positive cell $f^c$ of $K^c$.
This gives a bijection between their positive cells and shows that $K^c$
is a shifted core.

Call a $1$-path in $K$ \emph{taut} if its first and last edges are
non-dummy.  Every maximal dummy subpath of a taut path is automatically
a complete chain.  Indeed, between two inner non-dummy edges this is
forced by the directed inner cycle; when one of the boundary edges
$\eL,\eR$ is involved, the same conclusion follows from adaptedness.

Every $\io$-to-$\ta$ path is taut: it has the form $\eL w\eR$, where
$w$ follows the inner cycle from $u$ to $v$.  Both paths of every
positive cell are taut as well.  This is immediate for the single-edge
path.  For the other paths
\[
eC(\ta(e)),\qquad \eL C(u),\qquad C(v)\eR,
\]
the first and last edges are non-dummy: for $eC(\ta(e))$ they are both
$e$, while for the other two paths this follows from adaptedness.

A taut path is uniquely determined by its sequence of non-dummy edges:
between two consecutive such edges the complete dummy chain is forced.
Deleting dummy edges and inserting these forced chains are inverse
operations.  Consequently, $\pi$ is a bijection between the
$\io$-to-$\ta$ paths of $K$ and those of $K^c$.

Let $\DG_{\io,\ta}(K)$ be the subgroupoid of $\DG(K)$ consisting of the
reduced diagrams whose top and bottom labels are $\io$-to-$\ta$ paths,
and define $\DG_{\io,\ta}(K^c)$ similarly.  Given a diagram $\Delta$ in
this subgroupoid, contract every edge labelled by a dummy edge and
replace each face labelled by $f^\epsilon$, where
$\epsilon\in\{1,-1\}$, by a face labelled by $(f^c)^\epsilon$.

We verify that the resulting object is a diagram over $K^c$.  No face
disappears, since both boundary paths of every cell contain a non-dummy
edge.  Contracting the dummy-labelled edges preserves planarity and the
unique source and sink.  It also creates no directed cycle: otherwise,
immediately before contracting some dummy edge $d$, there would be a
second directed path from $\io(d)$ to $\ta(d)$.  The region between that
path and $d$ would contain a non-trivial subdiagram with one boundary
path equal to $d$, which is impossible because no cell has a dummy edge
as an entire boundary path.

For each face labelled by $f^\epsilon$, its two boundary paths contract
to
\[
\pi\bigl(\toppath{f^\epsilon}\bigr)
   =\toppath{(f^c)^\epsilon},
\qquad
\pi\bigl(\botpath{f^\epsilon}\bigr)
   =\botpath{(f^c)^\epsilon}.
\]
Common boundary paths of incident faces are contracted identically, and
the outer boundary paths become
$\pi(\toppath\Delta)$ and $\pi(\botpath\Delta)$.  Hence the quotient is
a diagram over $K^c$; denote it by $\pi(\Delta)$.

Contraction preserves dipoles.  Conversely, suppose two faces of
$\pi(\Delta)$ form a dipole, and let $F_1,F_2$ be their corresponding
faces in $\Delta$.  Since contraction removes no faces, $F_1$ and $F_2$
are adjacent, and their labels are inverse cells.  Their facing boundary
paths have the same contraction.  Both paths are taut, so uniqueness of
taut expansion implies that they coincide.  Thus $F_1,F_2$ form a
dipole in $\Delta$.  In particular, reduced diagrams contract to
reduced diagrams.

Contraction therefore defines a map
\[
\Pi\colon \DG_{\io,\ta}(K)\longrightarrow
\DG_{\io,\ta}(K^c),\qquad
\Pi(\Delta)=\pi(\Delta).
\]
It respects the partially defined product and inverses: contraction
commutes with concatenation and reflection, and the preceding
equivalence shows that reduction commutes with contraction.  The map is
injective.  Indeed, if $\Pi(\Delta)$ is a trivial diagram, then it has no
cells; contraction does not delete cells, so $\Delta$ is trivial.

For surjectivity, we first record the required occurrence bijection.
Let $q$ be an $\io$-to-$\ta$ path in $K$, let $s$ be either path of a
cell, and put $s^c=\pi(s)$.  Then
\[
q=a s b\quad\longmapsto\quad
\pi(q)=\pi(a)s^c\pi(b)
\]
is a bijection between occurrences of $s$ in $q$ and occurrences of
$s^c$ in $\pi(q)$.  Indeed, the non-dummy edges determine the endpoints
of the occurrence, and all intervening dummy chains are forced.

Trivial diagrams over $K^c$ lift by the bijection on
$\io$-to-$\ta$ paths.  An atomic diagram over $K^c$ is determined by
its top path, a cell, and an occurrence of the relevant path of that
cell.  Each of these data has a unique preimage under the path, cell,
and occurrence bijections, and hence every atomic diagram lifts.

Every non-trivial diagram is a finite concatenation of atomic diagrams.
The atomic lifts are composable because the intermediate paths have
unique preimages.  Their concatenation is reduced: a dipole in the
lifted concatenation would contract to a dipole in the given reduced
diagram over $K^c$.  Thus every reduced diagram over $K^c$ lies in the
image of $\Pi$.

Hence
\[
\DG_{\io,\ta}(K)\cong\DG_{\io,\ta}(K^c).
\]
Taking the group with base path $p$ gives the asserted isomorphism.
\end{proof}

\subsection{The diagram group of a dilated core is \texorpdfstring{$F_n$}{Fn}}

\begin{corollary}\label{cor:dilated-Fn}
Let $K$ be a dilated core of $F_n$ \textup{(}any $k\geq0$, any placement
of the dummy edges, any attachments\textup{)} and let $p$ be any positive
path from $\io$ to $\ta$ in $K$.  Then $\DG(K,p)\cong F_n$.
\end{corollary}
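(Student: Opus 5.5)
The plan is to reduce to the standard core in three steps, each using a result already proved in this section.

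First, use \cref{lem:dilated-paths} to replace the base path. Every $1$-path from $\io$ to $\ta$ in $K$ has the form $\eL w\eR$ and so is not purely dummy. Hence any two such paths are homotopic, and by \cref{lem:homotopic-base} the group $\DG(K,p)$ does not depend on the choice of $p$. We may therefore choose $p$ freely at each later stage.

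Second, normalize the attachments. If $k\geq1$, \cref{prop:sliding} gives a GS sequence $\Sigma$ from $K$ to a dilated core $K'$ that is identical except that its attachments $(u',v')$ are adapted; such attachments exist because there is an active edge. By \cref{lem:transport}, $\DG(K,p)\cong\DG(K',\phi_\Sigma(p))$, and $\phi_\Sigma(p)$ has the same endpoints $\io,\ta$ as $p$. So $\phi_\Sigma(p)$ is a positive $\io$-to-$\ta$ path in $K'$. Next, apply \cref{lem:projection} to $K'$. This gives $\DG(K',\phi_\Sigma(p))\cong\DG(K'^c,\pi(\phi_\Sigma(p)))$, where $K'^c$ is a shifted core (so $k=0$) and $\pi(\phi_\Sigma(p))$ is again an $\io$-to-$\ta$ path. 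If $k=0$ from the start, skip this step.

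Third, treat a shifted core $K''$ with attachments $(u,v)$. Its inner cycle consists of $n-1$ active edges. Relabel these edges as $e_0,\dots,e_{n-2}$ and the cycle vertices as $0,\dots,n-2$, so that the cycle edge entering $u$ is $e_0$, i.e.\ $u=1$. By \cref{prop:sliding}, slide the right attachment to $v=0$; this is a GS sequence, and \cref{lem:transport} again sends the base path to an $\io$-to-$\ta$ path. After the relabeling, the cells $e=e\,C(\ta(e))$, $\eL=\eL\,C(1)$ and $\eR=C(0)\,\eR$ coincide letter by letter with those of $K_n$ in \cref{def:coreFn}, using \cref{rem:cycle-convention}. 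So the complex is the standard core $K_n$. By \cref{lem:dilated-paths} the base path may then be replaced by $\eL e_1\cdots e_{n-2}\eR$, and \cref{cor:standard-core} gives $F_n$. The case $n=2$ is the same, with a single loop and $u=v=0$.

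The main point to check is that transport of the base path never leaves the class of positive $\io$-to-$\ta$ paths, since \cref{lem:projection} is stated only for such paths. This holds because $\phi_\Sigma$ preserves endpoints and positivity (\cref{def:edge-map}), and $\pi$ sends $\io$-to-$\ta$ paths to $\io$-to-$\ta$ paths. We also need that the relabeled shifted core really is $K_n$ up to isomorphism of directed $2$-complexes, and that isomorphic complexes have isomorphic diagram groups. Both are routine.
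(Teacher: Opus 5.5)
Your proposal is correct and follows the paper's proof essentially step for step. You make the base path independent of choice via \cref{lem:dilated-paths} and \cref{lem:homotopic-base}. For $k\geq1$ you slide to adapted attachments and apply \cref{lem:projection}, then slide the shifted core to the standard core and finish with \cref{cor:standard-core}. Your explicit relabeling (making $u=1$ and sliding only the right attachment to $0$) just spells out the identification with $K_n$ that the paper leaves implicit.
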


\begin{proof}
By \cref{lem:dilated-paths} and \cref{lem:homotopic-base}, the group
$\DG(K,p)$ does not depend on the choice of the positive $\io\to\ta$ base
path $p$; we may therefore change base paths freely below.

If $k\geq1$, slide the attachments (\cref{prop:sliding}) to adapted ones;
by \cref{lem:transport} this does not change the diagram group, and the
transported base is again a positive $\io\to\ta$ path (the edge mapping
preserves endpoints, and $\io,\ta$ are fixed).  By the Projection Lemma
(\cref{lem:projection}) the diagram group equals that of a shifted core
of $F_n$.  So we may assume $k=0$.

By \cref{prop:sliding} again, a shifted core with arbitrary attachments
is GS-equivalent to the standard core $K_n$; by \cref{lem:transport} the
diagram groups agree (over transported bases).  Finally
$\DG(K_n,q)\cong F_n$ for any positive $\io\to\ta$ base $q$ by
\cref{cor:standard-core}, \cref{lem:dilated-paths} and
\cref{lem:homotopic-base}.
\end{proof}

\section{The directed \texorpdfstring{$2$}{2}-complex of a dynamical diagram}\label{sec:complex}

In this section we extend the semigroup presentation of Belk and Stott
from canonical partitions to the more general partitions needed in the
induction, and realize it as a directed $2$-complex $\Kdyn(B)$ with only
the vertex identifications forced by its cells.  We then determine the
vertices of its $1$-skeleton and prove that its inner vertices form a
strongly connected component.

\subsection{Partitions and the dynamical presentation}

\begin{definition}[General and canonical partitions]\label{def:partition}
Let $B$ be a fast set of $n$ positive bumps with a marking.  A
\emph{general partition} associated with $B$ is a finite sequence
\[
A_1<A_2<\dots<A_N
\]
of pairwise disjoint intervals that contain the $2n$ feet of $B$ and
whose union agrees with $\supp(B)$ up to finitely many points.  The
non-foot intervals are called \emph{dummy intervals}; write
$N=2n+k$, where $k\geq0$ is their number.  The first interval is the
leftmost foot of $B$, and the last is the rightmost foot.

Suppose $B$ has $s$ isolated bumps.  A \emph{canonical partition} is a
general partition whose only non-foot intervals are, for each isolated
bump, one fundamental-domain interval between its two feet.  Thus a
canonical partition has $2n+s$ intervals.  A marking that gives such a
partition is a canonical configuration in the sense of
\cref{def:canonical-config}.
\end{definition}

Belk and Stott use canonical partitions \cite[\S3.1]{BS}.  The broader
notion above allows additional dummy intervals; this extra freedom is
needed when partitions are truncated in \cref{sec:reduction}.

For $b\in B$, let $i(b)$ and $j(b)$ be determined by
$A_{i(b)}=L_b$ and $A_{j(b)}=R_b$, and set
\[
\gap b=A_{i(b)+1}A_{i(b)+2}\cdots A_{j(b)-1}.
\]
We call $\gap b$ the \emph{partition gap path} of $b$.  A foot of
another bump that meets $\supp(b)$ lies in the gap of $b$, since it is
disjoint from both feet of $b$.  Hence the feet occurring in the
partition gap path $\gap b$ are precisely the feet in the combinatorial
gap word of $b$; the dummy intervals refine that word.  The intervals in
$\gap b$ cover the fundamental domain $[m_b,b(m_b))$ up to finitely many
points.  This motivates the following extension of the Belk--Stott
presentation.

\begin{definition}[The dynamical presentation]\label{def:presB}
The \emph{dynamical presentation} of $(B,A_1\cdots A_N)$ is the indexed
semigroup presentation
\[
\Pres_B=\bigl\langle A_1,\dots,A_N\bigm|
\lambda_b,\rho_b\ (b\in B)\bigr\rangle,
\]
where the positive rules are
\[
\lambda_b\colon A_{i(b)}\longrightarrow A_{i(b)}\gap b,
\qquad
\rho_b\colon A_{j(b)}\longrightarrow \gap b\,A_{j(b)}.
\]
\end{definition}

\begin{theorem}[Belk--Stott; {\cite[Theorem~3.7]{BS}}]\label{thm:BS}
Let $B$ be a geometrically fast set of $n$ positive bumps, let $s$ be the
number of isolated bumps, and let $A_1,\dots,A_{2n+s}$ be its canonical
partition.  Then
\[
\langle B\rangle\cong
\DG\bigl(\Pres_B,A_1A_2\cdots A_{2n+s}\bigr).
\]
\end{theorem}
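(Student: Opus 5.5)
This statement is the Belk--Stott representation theorem, which the paper cites rather than proves. If I had to reprove it, I would build a diagram-group model of the action directly.

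\emph{Dictionary.} Let $A_1<\dots<A_{2n+s}$ be the canonical partition. Think of a path over $\Pres_B$ as a finite ordered partition of $\supp(B)$ (up to finitely many points) into images of the letters. The rule $\lambda_b\colon A_{i(b)}\to A_{i(b)}\gap b$ records the geometric identity
\[
L_b=b^{-1}(L_b)\sqcup b^{-1}\bigl([m_b,b(m_b))\bigr).
\]
Here $b^{-1}(L_b)\subset L_b$ is the smaller left foot, and the gap is tiled by the letters of $\gap b$. The rule $\rho_b$ is the mirror identity $R_b=b([m_b,b(m_b)))\sqcup b(R_b)$.

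\emph{Action.} Assign to each $(u,v)$-diagram over $\Pres_B$ with top path $A_1\cdots A_{2n+s}$ a homeomorphism, as follows. A diagram from the base word subdivides $\supp(B)$ into intervals, each labelled by a letter $A_i$ together with a canonical homeomorphism onto $A_i$. This homeomorphism is a composite of powers of bumps, read off from the cells that were used. A spherical diagram then gives two labelled subdivisions of the same word. The element it represents is the piecewise map sending each interval of the first subdivision onto the corresponding interval of the second, via the composite of the canonical maps.

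\emph{Steps.} I would carry out four checks in this order.
\begin{enumerate}
\item Show that dipoles give the identity and that concatenation gives composition. This yields a homomorphism $\Phi\colon\DG(\Pres_B,A_1\cdots A_{2n+s})\to\operatorname{Homeo}^+(\supp B)$.
\item Show that each $b$ lies in the image. Take the diagram that applies $\lambda_b$ at $L_b$ and $\rho_b^{-1}$ at $R_b$ and fixes all other letters. This corresponds to the standard ``fast'' picture: $b$ shrinks its left foot, moves the gap onto the right portion, and expands.
\item Show that the image lies in $\langle B\rangle$. Every atomic move is realized by a bump, so this holds by construction.
\item Prove that $\Phi$ is injective.
\end{enumerate}

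\emph{Main obstacle: injectivity.} Dipoles reflect only formal cancellation. A nontrivial reduced diagram must induce a nontrivial homeomorphism. Here I would use the ping-pong nature of fastness, the key dynamical lemma of \cite{BBKMZ}. The label data of a reduced diagram determine how the first leaf of the top tree-part is sent. In canonical configuration the feet together with the isolated gaps cover the support, so intervals labelled by distinct letters or distinct addresses are disjoint. Hence a nontrivial reduced spherical diagram moves some interval to a disjoint one.

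Isolated bumps require the extra dummy gap intervals. Without them, the rules would not cover $\supp(B)$, and injectivity would fail. I expect this step, a careful ``normal form via leaf addresses'' argument, to be the hardest part. The alternative is to deduce the result from \cref{thm:diagram-isomorphism} using a specific PL realization whose germs are computed directly.
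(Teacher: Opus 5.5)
The paper does not prove this statement; it imports it from Belk--Stott, so your outline has to stand on its own. The architecture is reasonable. $\Pres_B$ is tree-like, so every reduced spherical diagram has the form $T_+\circ T_-^{-1}$ with $T_\pm$ positive forests on the base word. Your labelled subdivisions make sense only for positive diagrams, so $\Phi$ must be defined on such pairs and shown invariant under adding a common caret. The diagram $\beta_b$ of your Step~2 does map to $b$.

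The genuine gap is Step~3. An atomic diagram is a morphism between two different paths, not a group element, so ``every atomic move is realized by a bump'' yields no global statement. $\Phi(T_+\circ T_-^{-1})$ is obtained by gluing, along the leaf intervals, composites of canonical maps that are in general \emph{different} elements of $\langle B\rangle$. A homeomorphism that is merely piecewise in $\langle B\rangle$ need not lie in $\langle B\rangle$.

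Since $\Phi(\beta_b)=b$ and $\Phi$ is injective, Step~3 is equivalent to the assertion that the $n$ diagrams $\beta_b$ generate $\DG(\Pres_B,A_1\cdots A_{2n+s})$. This generation statement is the real content of the theorem; compare the fact that the tree-pair model of $F$ is generated by $x_0,x_1$, which needs a proof. You need an actual normal-form argument. For example, suppose $T_+$ has a caret at the root $L_b$. Then $\beta_b^{-1}\circ(T_+\circ T_-^{-1})$ has top forest obtained by moving that caret to the root $R_b$ and re-hanging the subtrees. This strictly lowers the number of carets below roots labelled by left feet. But carets deeper in the forests require conjugates of the $\beta_b$ and a complexity controlling all depths, and none of this is ``by construction''.

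Conversely, injectivity, which you single out as the main obstacle, is the easy step. The ping-pong sketch you give for it is neither needed nor correct as stated. Distinct leaves of one forest have disjoint intervals, but a leaf of $T_+$ and the corresponding leaf of $T_-$ can be nested, e.g.\ $b^{-1}(L_b)$ and $L_b$ for $\beta_b$. Here is the direct argument. If $\Phi(T_+\circ T_-^{-1})$ is the identity, the two labelled leaf subdivisions coincide. Each letter is the top of at most one rule, and a node is a leaf exactly when its interval occurs among the leaf intervals. Hence a positive forest is recovered from its labelled leaf subdivision, so $T_+=T_-$ and the reduced diagram is trivial.

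Your fallback via \cref{thm:diagram-isomorphism} only changes the realization. For any realization you would still have to show that the image of the diagram group is generated by the bumps.
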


\subsection{The complex \texorpdfstring{$\Kdyn$}{Kdyn}}

The presentation $\Pres_B$ has base path $A_1\cdots A_N$.  We give
these letters the vertex structure obtained from the base line graph by
imposing the identifications forced by the cells.

Making $A_1\cdots A_N$ a $1$-path first forces the following line graph.

\begin{definition}[Base line graph]\label{def:base-line}
The \emph{base line graph} $\Gamma_{\mathrm{base}}$ has vertices
$v_1,\dots,v_{N+1}$, edges $A_1,\dots,A_N$, and incidence maps
\[
\io(A_m)=v_m,\qquad \ta(A_m)=v_{m+1}.
\]
Every contiguous subword of the base word is then a $1$-path.
\end{definition}

\begin{lemma}[Gap strictness]\label{lem:gap-strict}
For every $b\in B$, one has $j(b)-i(b)\geq2$; equivalently, $\gap b$ is
non-empty.
\end{lemma}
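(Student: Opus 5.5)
We need $i(b)+1<j(b)$, i.e.\ the partition has at least one interval strictly between $L_b=A_{i(b)}$ and $R_b=A_{j(b)}$. Since the intervals are pairwise disjoint and ordered with $L_b<R_b$, we have $i(b)<j(b)$. The plan is to rule out $j(b)=i(b)+1$ by a covering argument that uses the fundamental domain $[m_b,b(m_b))$.

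Suppose, for contradiction, that $j(b)=i(b)+1$, so that $L_b$ and $R_b$ are consecutive intervals of the partition. By \cref{def:partition}, the union of the $A_m$ agrees with $\supp(B)$ up to finitely many points. The gap $[m_b,b(m_b))$ is a non-degenerate interval, because $b(m_b)>m_b$ for the positive bump $b$ and $m_b\in\supp(b)$. It lies inside $\supp(b)\subseteq\supp(B)$. Hence all but finitely many of its points lie in some $A_m$.

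Now $L_b=(a_b,m_b)$ lies entirely to the left of the gap and $R_b=[b(m_b),c_b)$ lies entirely to its right. Any interval $A_m$ meeting the interior of the gap is therefore different from $A_{i(b)}$ and $A_{j(b)}$. Since the $A_m$ are linearly ordered, pairwise disjoint intervals, such an $A_m$ meets a point strictly between the right endpoint of $L_b$ and the left endpoint of $R_b$. It must therefore lie strictly between them in the order $A_1<\dots<A_N$. This gives $i(b)<m<j(b)$, contradicting consecutiveness. The same argument shows directly that the intervals meeting the gap are exactly those of $\gap b$, matching the remark preceding \cref{def:presB}.

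The main point needing care is the ordering step. Disjoint intervals are comparable, so the ordering is well defined. An interval containing a point $x$ with $m_b\le x<b(m_b)$ cannot lie left of $L_b$, since it contains a point exceeding every point of $L_b$. By the symmetric reasoning it cannot lie right of $R_b$. Everything else is immediate from the definitions, and in particular the argument does not need fastness beyond the disjointness of the feet.
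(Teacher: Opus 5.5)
Your proof is correct and follows the same approach as the paper: the fundamental domain $[m_b,b(m_b))$ has positive length and lies in $\supp(B)$, so it meets some partition interval, and that interval must lie strictly between $L_b$ and $R_b$. The only difference is that you write out the ordering step, which the paper's one-line proof leaves implicit.
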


\begin{proof}
The fundamental domain $[m_b,b(m_b))$ has positive length and is covered,
up to finitely many points, by partition intervals strictly between the
two feet of $b$.
\end{proof}

\begin{proposition}[Forced identifications]\label{prop:identification}
For each $b\in B$, both positive cells $\lambda_b$ and $\rho_b$ force the
same identification
\[
v_{i(b)+1}\sim v_{j(b)},
\]
and no other identification.
\end{proposition}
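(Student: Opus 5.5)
We compute directly what it means for each positive cell to have top and bottom paths with common initial and terminal vertices, starting from the base line graph $\Gamma_{\mathrm{base}}$. A cell imposes exactly two requirements. First, its top and bottom paths must share their initial and terminal vertices. Second, its bottom path, which is a word in the letters $A_m$, must be a $1$-path, so consecutive letters have to match up. Since the top paths are single edges, only the bottom paths generate the second kind of condition. By \cref{lem:gap-strict}, $\gap b$ is non-empty, so every word below is a genuine non-empty path.

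For $\lambda_b$ the top path is $A_{i(b)}$, running from $v_{i(b)}$ to $v_{i(b)+1}$. The bottom path is $A_{i(b)}A_{i(b)+1}\cdots A_{j(b)-1}$. This is a contiguous subword of the base word, so by \cref{def:base-line} it is already a $1$-path, running from $v_{i(b)}$ to $v_{j(b)}$. The initial vertices agree automatically, so the only new condition is that the terminal vertices agree, which is exactly $v_{i(b)+1}\sim v_{j(b)}$.

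For $\rho_b$ the top path is $A_{j(b)}$, running from $v_{j(b)}$ to $v_{j(b)+1}$. The bottom path is $\gap b\,A_{j(b)}=A_{i(b)+1}\cdots A_{j(b)-1}A_{j(b)}$. This is again a contiguous subword, running from $v_{i(b)+1}$ to $v_{j(b)+1}$. Now the terminal vertices agree automatically, and matching the initial vertices again forces precisely $v_{i(b)+1}\sim v_{j(b)}$.

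To justify ``no other identification,'' we note that in both cases the bottom word is contiguous in the base word. Its readability therefore requires nothing beyond what the line graph already provides, and the endpoint conditions produce only the single pair computed above. So the cell imposes no further vertex equation.

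The only delicate point is this one. Contiguity makes internal readability automatic, so the argument has to rest on the observation that each bottom path is literally a subword of $A_1\cdots A_N$ and not merely a rearrangement of letters. It also uses that $\gap b$ was defined as the consecutive run strictly between the two feet.
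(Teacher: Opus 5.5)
Your proof is correct and follows the same route as the paper. For $\lambda_b$ the initial vertices agree automatically (common first edge $A_{i(b)}$), and equating terminal vertices gives $v_{i(b)+1}\sim v_{j(b)}$; for $\rho_b$ the terminal vertices agree (common last edge $A_{j(b)}$), and equating initial vertices gives the same relation. Your remark that each bottom path is a contiguous subword of the base word, and so already readable in $\Gamma_{\mathrm{base}}$, spells out what the paper leaves implicit through \cref{def:base-line}.
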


\begin{proof}
The paths of $\lambda_b$ begin with the same edge $A_{i(b)}$.  Their
terminal vertices are $v_{i(b)+1}$ and $v_{j(b)}$, so the displayed
identification is necessary and sufficient.  The paths of $\rho_b$ end
with the same edge $A_{j(b)}$, while their initial vertices are
$v_{j(b)}$ and $v_{i(b)+1}$, giving the same identification.
\end{proof}

\begin{definition}[The dynamical complex $\Kdyn$]\label{def:Kdyn}
Let $\sim$ be the equivalence relation on the base vertices generated by
$v_{i(b)+1}\sim v_{j(b)}$ for $b\in B$.  The \emph{dynamical complex}
$\Kdyn(B)$---more precisely, the complex associated with
$(B,A_1\cdots A_N)$---has $1$-skeleton
\[
\Gamma^{(1)}=\Gamma_{\mathrm{base}}/\!\sim
\]
and positive cells $\lambda_b,\rho_b$ for $b\in B$; see
\cref{fig:skeleton}.  It is tree-like: the feet are its $2n$ active
edges, and the $k$ non-foot intervals are its dummy edges.  For a base
vertex $v$, write $[v]$ for its $\sim$-class.  When no confusion can
arise, any representative may be used inside the brackets.
\end{definition}

\begin{proposition}[Admissibility]\label{prop:admissible}
Every positive cell of $\Pres_B$ is valid over $\Gamma^{(1)}$, so
$\Kdyn(B)$ is a directed $2$-complex.  Hence, for every $1$-path $q$ of
$\Kdyn(B)$,
\[
\DG\bigl(\Kdyn(B),q\bigr)\cong\DG(\Pres_B,q).
\]
\end{proposition}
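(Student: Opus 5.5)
To prove the first assertion I would check, for each $b\in B$, that both sides of $\lambda_b$ and of $\rho_b$ are $1$-paths in $\Gamma^{(1)}$ with common initial and terminal vertices. For $\lambda_b$, the top path is the single edge $A_{i(b)}$, from $[v_{i(b)}]$ to $[v_{i(b)+1}]$. The bottom word $A_{i(b)}\gap b=A_{i(b)}A_{i(b)+1}\cdots A_{j(b)-1}$ is a contiguous subword of the base word, hence a $1$-path already in $\Gamma_{\mathrm{base}}$ and so in the quotient. It runs from $[v_{i(b)}]$ to $[v_{j(b)}]$, and by the defining relation $v_{i(b)+1}\sim v_{j(b)}$ these endpoints agree with those of the top path. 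Symmetrically, the bottom word $\gap b\,A_{j(b)}=A_{i(b)+1}\cdots A_{j(b)}$ of $\rho_b$ is a contiguous base subword from $[v_{i(b)+1}]$ to $[v_{j(b)+1}]$. The top edge $A_{j(b)}$ goes from $[v_{j(b)}]$ to $[v_{j(b)+1}]$, so the endpoints again agree by the same relation. Non-emptiness of all these paths follows from \cref{lem:gap-strict}, which guarantees that $\gap b$ is non-empty. This establishes that $\Kdyn(B)$ is a directed $2$-complex in the sense of \cref{def:2complex}. It is consistent with \cref{prop:identification}, which says that these are exactly the identifications needed.

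For the isomorphism, I would apply \cref{lem:vertices}. Identifying all vertices of $\Kdyn(B)$ produces the one-vertex complex whose edges are $A_1,\dots,A_N$ and whose positive cells are $\lambda_b,\rho_b$. By \cref{rem:presentation}, this one-vertex complex is precisely the indexed rewriting system $\Pres_B$. \Cref{lem:vertices} then gives $\DG(\Kdyn(B),q)\cong\DG(\Pres_B,q)$ for every non-empty $1$-path $q$ of $\Kdyn(B)$. The only detail to watch is that the cells and their indexing are preserved verbatim under the identification, which holds by construction.

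I do not expect a genuine obstacle here. The one point needing care is to use contiguity in the base word to see that the long sides are paths without invoking any identification, and then to invoke the single relation $v_{i(b)+1}\sim v_{j(b)}$ only to match endpoints.
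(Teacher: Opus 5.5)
Your proposal is correct and follows the paper's route: the paper matches the endpoints of both sides of $\lambda_b$ and $\rho_b$ by citing the forced identification $v_{i(b)+1}\sim v_{j(b)}$ from \cref{prop:identification}, which you verify directly via contiguity in the base word, and then obtains the isomorphism from \cref{lem:vertices} after identifying all vertices, exactly as you do. One minor point: the appeal to \cref{lem:gap-strict} is unnecessary, since each bottom path already contains the foot $A_{i(b)}$ or $A_{j(b)}$ and so is automatically non-empty.
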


\begin{proof}
\Cref{prop:identification} gives the identifications needed to make both
paths of every cell have common endpoints.  The diagram-group
isomorphism follows from \cref{lem:vertices} after identifying all
vertices.
\end{proof}

\begin{theorem}[Representation; {\cite[Theorem~3.7]{BS}}]
\label{thm:representation}
Let $B$ be a geometrically fast set of $n$ positive bumps, let $s$ be the
number of isolated bumps, and let $A_1,\dots,A_{2n+s}$ be its canonical
partition.  Then
\[
\langle B\rangle\cong
\DG\bigl(\Kdyn(B),A_1A_2\cdots A_{2n+s}\bigr).
\]
\end{theorem}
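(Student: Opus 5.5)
The plan is to combine \cref{thm:BS} with \cref{prop:admissible}, using the canonical partition as the general partition $A_1\cdots A_N$ with $N=2n+s$. The canonical partition is a special case of \cref{def:partition}, since its only non-foot intervals are the fundamental domains of isolated bumps. So $\Kdyn(B)$ is defined for it, and by \cref{prop:admissible} it is a directed $2$-complex whose positive cells are exactly the rules $\lambda_b,\rho_b$ of $\Pres_B$.

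First, I will check that the base word $q=A_1A_2\cdots A_{2n+s}$ is a $1$-path of $\Kdyn(B)$. In $\Gamma_{\mathrm{base}}$ we have $\ta(A_m)=v_{m+1}=\io(A_{m+1})$. Passing to the quotient $\Gamma_{\mathrm{base}}/\!\sim$ preserves these incidences, so $q$ remains a $1$-path, as in \cref{rem:readable}. It is non-empty because $n\ge1$.

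Next, \cref{prop:admissible} gives $\DG(\Kdyn(B),q)\cong\DG(\Pres_B,q)$. Concretely, \cref{lem:vertices} applied to the quotient identifying all vertices turns $\Kdyn(B)$ into the one-vertex complex of \cref{rem:presentation} attached to $\Pres_B$. Then \cref{thm:BS} gives $\DG(\Pres_B,q)\cong\langle B\rangle$. Composing the two isomorphisms yields the statement.

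The only point needing care is that the indexed rewriting system $\Pres_{\Kdyn(B)}$, obtained by collapsing all vertices of $\Kdyn(B)$, is literally the Belk--Stott presentation used in \cref{thm:BS}. It must have the same letters, the same indexed rules, and the same base word. This holds by construction: the cells of $\Kdyn(B)$ are defined as the rules $\lambda_b,\rho_b$, and for a canonical partition the partition gap path $\gap b$ coincides with the gap word of Belk and Stott. The word $\gap b$ consists of the feet in the gap of $b$, together with the dummy fundamental-domain interval when $b$ is isolated. I would state this identification explicitly and then conclude.
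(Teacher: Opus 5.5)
Your proposal is correct and follows the paper's proof, which is just ``combine \cref{thm:BS} with \cref{prop:admissible}''; your extra checks, that the base word is a $1$-path of $\Kdyn(B)$ and that collapsing all vertices recovers $\Pres_B$, are already contained in those two results. One small imprecision, which does not affect the argument: for a non-isolated bump $b$, the path $\gap{b}$ can also contain the fundamental-domain intervals of isolated bumps nested inside $b$, not only feet, and in any case \cref{thm:BS} is already stated for $\Pres_B$ itself.
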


\begin{proof}
Combine \cref{thm:BS,prop:admissible}.
\end{proof}

\begin{figure}[ht]
\centering
\begin{tikzpicture}[scale=1.15, every node/.style={font=\small}]
  \begin{scope}
    \foreach \i in {1,...,5} { \fill (\i*1.5-1.5,0) circle (1.8pt); \node[below] at (\i*1.5-1.5,-0.08) {$v_{\i}$}; }
    \foreach \i/\l in {1/{A_1=L_x},2/{A_2=L_y},3/{A_3=R_x},4/{A_4=R_y}} {
      \draw[->,thick] (\i*1.5-1.5+0.09,0) -- (\i*1.5-0.09,0) node[midway,above] {$\l$};
    }
    \draw[dashed,-{Stealth}] (1.5,-0.45) .. controls (2.2,-1.0) and (2.8,-1.0) .. (3.0,-0.45);
    \node at (2.3,-1.15) {$v_2\sim v_3$ (bump $x$)};
    \draw[dashed,-{Stealth}] (3.0,0.45) .. controls (3.7,1.0) and (4.3,1.0) .. (4.5,0.45);
    \node at (3.8,1.15) {$v_3\sim v_4$ (bump $y$)};
  \end{scope}
  \begin{scope}[xshift=9.1cm,yshift=0cm]
    \node[circle,draw,inner sep=1.5pt] (i) at (0,0) {$\io$};
    \node[circle,draw,inner sep=1.5pt] (w) at (1.9,0) {$w$};
    \node[circle,draw,inner sep=1.5pt] (t) at (3.8,0) {$\ta$};
    \draw[->,thick] (i) -- (w) node[midway,above] {$A_1$};
    \draw[->,thick] (w) -- (t) node[midway,above] {$A_4$};
    \draw[->,thick] (w) edge[loop above, looseness=14] node[above] {$A_2$} (w);
    \draw[->,thick] (w) edge[loop below, looseness=14] node[below] {$A_3$} (w);
    \node at (1.9,-1.5) {$\Gamma^{(1)}$ for $L_x<L_y<R_x<R_y$};
  \end{scope}
\end{tikzpicture}
\caption{The base line graph of the canonical two-bump configuration
generating $F$, the identifications forced by the two bumps, and the
resulting $1$-skeleton, in which $w=\{v_2,v_3,v_4\}$ and the inner feet
$A_2,A_3$ become loops.}
\label{fig:skeleton}
\end{figure}

\subsection{The structure of the \texorpdfstring{$1$}{1}-skeleton}

\begin{lemma}[Vertex collapse and classification]\label{lem:vertex-collapse}
Let $B$ be a fast set of $n\geq1$ positive bumps with a general partition
of $N=2n+k$ intervals.  Then $\Gamma^{(1)}$ has exactly $n+k+1$ vertices:
\begin{enumerate}
\item the initial vertex $\io=[v_1]$, with no incoming edges and with
$A_1$ as its unique outgoing edge;
\item the terminal vertex $\ta=[v_{N+1}]$, with no outgoing edges and
with $A_N$ as its unique incoming edge;
\item exactly $n+k-1$ inner vertices, each with at least one incoming and
one outgoing edge.
\end{enumerate}
\end{lemma}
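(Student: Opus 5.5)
The count comes from analysing the identification graph on the base vertices $v_1,\dots,v_{N+1}$. Consider the graph $\Theta$ with these $N+1$ vertices and one edge $v_{i(b)+1}$---$v_{j(b)}$ for each $b\in B$, so $n$ edges. By \cref{lem:gap-strict}, $i(b)+1<j(b)$, so no edge is a loop. The number of $\sim$-classes is $N+1$ minus the rank of the cycle-free part of $\Theta$. I will show that $\Theta$ is a forest. Then the classes number exactly $N+1-n=n+k+1$.

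Next I rule out cycles in $\Theta$. Each endpoint of an edge is determined by a foot: $v_{i(b)+1}$ is the right endpoint of $L_b$, and $v_{j(b)}$ is the left endpoint of $R_b$. Distinct feet occupy distinct positions, so distinct bumps use distinct ``left-foot-right-end'' vertices. Likewise they use distinct ``right-foot-left-end'' vertices. A vertex $v_m$ can be the right end of a left foot (if $A_{m-1}$ is a left foot) or the left end of a right foot (if $A_m$ is a right foot), but not two of the same type. So every vertex has degree at most $2$ in $\Theta$. Hence $\Theta$ is a disjoint union of paths and cycles, and it remains to exclude cycles. Orient each edge from $v_{i(b)+1}$ to $v_{j(b)}$; this increases the index, since $j(b)>i(b)+1$. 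In a cycle with every vertex of degree $2$, each vertex has one edge of each type, that is, one incoming and one outgoing edge. So the cycle would be a directed cycle along which the index strictly increases, which is impossible. Thus $\Theta$ is a forest with $n$ edges, and there are $n+k+1$ classes.

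For the classification, I first treat $v_1$. It is the left end of $A_1$, the leftmost foot, which is a left foot. So $v_1$ is not the right end of any left foot, since nothing precedes it. It is also not the left end of a right foot, because $A_1$ is a left foot. Hence $v_1$ is isolated in $\Theta$, so $\io=\{v_1\}$. Its only incident edge in $\Gamma_{\mathrm{base}}$ is the outgoing $A_1$. The same argument applies symmetrically to $v_{N+1}$, the right end of the rightmost foot $A_N$, which is a right foot. That foot is not a left foot, and no edge starts at $v_{N+1}$. So $\ta=\{v_{N+1}\}$, with unique incoming edge $A_N$. This leaves $n+k-1$ other classes. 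Each of them contains some $v_m$ with $2\le m\le N$, and such a vertex has incoming edge $A_{m-1}$ and outgoing edge $A_m$.

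The main point requiring care is the degree bound together with the acyclicity argument. In particular, I must check that the two ``types'' of endpoint are exactly as described. This holds because $i(b)+1$ is defined by the left foot and $j(b)$ by the right foot, and the partition intervals are indexed consistently with the left-to-right order.
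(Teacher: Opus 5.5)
Your proof is correct. Like the paper, you reduce the count to showing that the identifications $v_{i(b)+1}\sim v_{j(b)}$ never close a cycle, but you organize this differently. You first bound the degree of every base vertex in the identification graph by $2$, using that both the $i(b)$ and the $j(b)$ are pairwise distinct. You then rule out cycles because a cycle would have to be a directed cycle for the index-increasing orientation $v_{i(b)+1}\to v_{j(b)}$.

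The paper instead orders the bumps so that $j(b_1)<\dots<j(b_n)$. For $r<m$ one has $i(b_r)+1\le j(b_r)<j(b_m)$, so the vertex $v_{j(b_m)}$ is still a singleton class when the $m$-th identification is applied, and each identification merges exactly two classes. This incremental argument uses only the distinctness of the $j(b)$ (each vertex is the right end of at most one identification) together with the monotonicity $i(b)+1<j(b)$. It does not need the distinctness of the $i(b)$, so it is slightly more economical than your global paths-and-cycles analysis, though the two are equally rigorous.

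Your explicit check that $v_1$ and $v_{N+1}$ are untouched also fills in a step the paper leaves implicit. It uses that $A_1$ is a left foot, $A_N$ is a right foot, and $2\le i(b)+1<j(b)\le N$.
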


\begin{proof}
Order the bumps so that
$j(b_1)<\cdots<j(b_n)$.  When the identification
$v_{i(b_m)+1}\sim v_{j(b_m)}$ is applied, the vertex $v_{j(b_m)}$ has not
occurred in an earlier identification: for $r<m$,
$i(b_r)+1\leq j(b_r)<j(b_m)$.  Each of the $n$ identifications therefore
merges two classes, and the number of classes drops from
$N+1=2n+k+1$ to $n+k+1$.

The vertices $v_1$ and $v_{N+1}$ are never identified with another base
vertex, giving the first two cases.  Every other class contains a base
vertex with one incoming and one outgoing edge, so it has both.
\end{proof}

\begin{proposition}[Inner strong connectivity]\label{prop:inner-connected}
Let $B$ be an irreducible fast set of $n\geq2$ positive bumps with a
general partition.  The inner vertices of $\Kdyn(B)$ form a strongly
connected component.
\end{proposition}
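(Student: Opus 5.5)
The plan is to exploit the fact that every bump turns a segment of the base line into a directed closed walk, and that crossing bumps produce overlapping segments. By \cref{lem:vertex-collapse}, the inner vertices are exactly the classes $[v_m]$ with $2\le m\le N$. Since $\io$ has no incoming edge and $\ta$ has no outgoing edge, neither can lie on a directed cycle through another vertex. So it suffices to show that any two of the classes $[v_2],\dots,[v_N]$ are mutually reachable by $1$-paths in $\Gamma^{(1)}$.

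\emph{Step 1: each bump gives a directed cycle.} Fix $b\in B$ and consider the indices $i(b)+1\le m\le j(b)$. The base edges $A_{i(b)+1},\dots,A_{j(b)-1}$ spell the partition gap path $\gap b$. This is a $1$-path from $[v_{i(b)+1}]$ to $[v_{j(b)}]$, and it is non-empty by \cref{lem:gap-strict}. By \cref{prop:identification} its two endpoints coincide. Hence $\gap b$ is a closed directed walk passing through every class $[v_m]$ with $m$ in the integer interval $J_b:=[\,i(b)+1,\,j(b)\,]$. All these classes therefore lie in a single strongly connected set.

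\emph{Step 2: crossing bumps have overlapping intervals.} Suppose $x$ and $y$ cross, say $L_x<L_y<R_x<R_y$. Then $i(x)<i(y)<j(x)<j(y)$, so $i(y)+1\le j(x)$, and the intervals $J_x$ and $J_y$ share the index $j(x)$. Hence the strongly connected sets from Step 1 for $x$ and $y$ share a vertex, and so they merge into one strongly connected set. Nested or disjoint pairs are not needed.

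\emph{Step 3: irreducibility covers the whole range.} Since $\GB(B)$ is connected, Step 2 applied along paths in the crossing graph shows that all classes $[v_m]$ with $m\in\bigcup_b J_b$ lie in one strongly connected set. Because consecutive intervals in a crossing chain overlap, this union is a single integer interval, namely $[\min_b i(b)+1,\ \max_b j(b)]$. By \cref{def:partition}, $A_1$ is the leftmost foot, which must be a left foot, so $\min_b i(b)=1$. Likewise $A_N$ is the rightmost foot, which is a right foot, so $\max_b j(b)=N$. Thus the union is $[2,N]$, which covers all inner vertices, including those at dummy intervals.

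The only delicate point I expect is Step 2, namely checking that the overlap of $J_x$ and $J_y$ is an actual shared base vertex and not merely adjacency. This is exactly the strict inequality $i(y)<j(x)$ coming from the crossing. The rest is bookkeeping with \cref{lem:vertex-collapse} and \cref{prop:identification}.
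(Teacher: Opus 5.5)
Your proof is correct, but it is organized differently from the paper's.

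The paper argues extremally. It fixes an inner vertex $[v_m]$ and lets $\mu$ be the least index $q\in\{2,\dots,N\}$ with $[v_q]$ reachable from $[v_m]$. It observes that every index from $\mu$ to $N$ is then reachable. It then shows $\mu\geq3$ is impossible: every bump with $j(b)\geq\mu$ would have $i(b)\geq\mu-1$. The bumps with $j(b)\leq\mu-1$ (including the owner of $A_1$) and those with $j(b)\geq\mu$ (including the owner of $A_N$) would then form a crossing-free cut of $\GB(B)$, contradicting irreducibility.

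You instead build the component directly. Each partition gap path $\gap b$ closes up into a directed closed walk through the classes indexed by $J_b=[i(b)+1,j(b)]$. A crossing pair $x,y$ makes $J_x$ and $J_y$ share the index $j(x)$. Connectivity of $\GB(B)$, together with $i(\cdot)=1$ and $j(\cdot)=N$ for the owners of $A_1$ and $A_N$, then forces $\bigcup_b J_b=[2,N]$.

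Both proofs rest on the same two facts: the rightward base edges, and the identification $v_{i(b)+1}\sim v_{j(b)}$ that jumps back from a right foot. Yours exhibits explicit cycles witnessing strong connectivity and shows which crossing glues which pieces. The paper's argument avoids the small interval-union bookkeeping and invokes irreducibility only once, through a single cut.
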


\begin{proof}
For $2\leq q<q'\leq N$, the images of
$A_q,A_{q+1},\dots,A_{q'-1}$ give a path from $[v_q]$ to $[v_{q'}]$.
It therefore suffices to show that every inner vertex reaches $[v_2]$.

Fix $[v_m]$, $2\leq m\leq N$, and let
\[
\mathcal R=\{q\in\{2,\dots,N\}\mid [v_q]\text{ is reachable from }
[v_m]\}.
\]
Then $m\in\mathcal R$.  If $q\in\mathcal R$ and $q<N$, then
$q+1\in\mathcal R$; and if $v_q\sim v_{q'}$, then
$q\in\mathcal R$ if and only if $q'\in\mathcal R$.  Let
$\mu=\min\mathcal R$.  Then every index from $\mu$ through $N$ lies in
$\mathcal R$.

Suppose $\mu\geq3$.  If $j(b)\geq\mu$, then
$j(b)\in\mathcal R$ and the identification
$v_{i(b)+1}\sim v_{j(b)}$ gives $i(b)+1\in\mathcal R$.  Hence
$i(b)\geq\mu-1$.  Set
\[
B_{\mathrm{left}}=\{b\mid j(b)\leq\mu-1\},\qquad
B_{\mathrm{right}}=\{b\mid j(b)\geq\mu\}.
\]
The owner of $A_N$ lies in $B_{\mathrm{right}}$.  The owner $b^*$ of
$A_1$ lies in $B_{\mathrm{left}}$: otherwise $j(b^*)\geq\mu$ would imply
$i(b^*)\geq\mu-1$, contradicting $i(b^*)=1$.  Thus both sets are
non-empty.  For $b\in B_{\mathrm{left}}$ and
$b'\in B_{\mathrm{right}}$,
\[
j(b)\leq\mu-1\leq i(b'),
\]
so $b$ and $b'$ cannot cross.  This disconnects the crossing graph,
contrary to irreducibility.  Therefore $\mu=2$, and every inner vertex
reaches $[v_2]$.  Since $[v_2]$ reaches every inner vertex by walking to
the right, the inner vertices are strongly connected.  The source and
sink do not belong to this component because the source cannot be
entered and the sink cannot be left.
\end{proof}

\section{Reduction of the dynamical complex to a dilated core}
\label{sec:reduction}

Throughout this section, let $B=\{b_1,\dots,b_n\}$ be a peelable fast
set of $n\geq2$ positive bumps, indexed so that
\[
R_1<R_2<\dots<R_n,
\]
and let $A_1,\dots,A_N$, $N=2n+k$, be a general partition.  Then $A_1$
is the left foot of the leftmost bump and $A_N=R_n$.

\subsection{Lifting GS sequences}

The inductive step deletes the last bump, applies the induction
hypothesis to the smaller complex, and replays the resulting GS sequence
on the larger complex.  The next lemma justifies that replay.

\begin{lemma}[Lifting Lemma]\label{lem:lifting}
Let $K=(V,E,\mathcal R)$ be a directed $2$-complex, where $\mathcal R$
is its set of positive cells.  Let
\[
\widehat K=(\widehat V,E\sqcup F,\mathcal R\sqcup\mathcal S)
\]
be a directed $2$-complex together with a map
$\theta\colon V\to\widehat V$ such that the endpoints in $\widehat K$ of
each edge of $E$ are the $\theta$-images of its endpoints in $K$.  Let
$\Sigma$ be a GS sequence from $K$ to
$K^*=(V,E^*,\mathcal R^*)$, with fresh edges disjoint from $F$.  Then
$\Sigma$ lifts to a GS sequence $\widehat\Sigma$ from $\widehat K$ to
\[
\widehat K^*=(\widehat V,E^*\sqcup F,
\mathcal R^*\sqcup\mathcal S^*),
\]
where the endpoints of the edges of $E^*$ are the $\theta$-images of
their endpoints in $K^*$, and $\mathcal S^*$ is obtained by applying
$\phi_\Sigma$ to both paths of every cell of $\mathcal S$, extended as
the identity on $F$.  Moreover, $\phi_{\widehat\Sigma}$ agrees with
$\phi_\Sigma$ on $E$ and fixes $F$.
\end{lemma}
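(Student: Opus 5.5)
We argue by induction on the length of $\Sigma$, lifting one move at a time. We maintain the invariant that the current lifted complex has the form $(\widehat V,E_t\sqcup F,\mathcal R_t\sqcup\mathcal S_t)$. Here $(V,E_t,\mathcal R_t)$ is the complex after $t$ moves of $\Sigma$, the endpoints of each edge of $E_t$ are the $\theta$-images of its endpoints in $K_t$, and $\mathcal S_t$ is obtained from $\mathcal S$ by applying the partial edge mapping $\phi_{(\sigma_1,\dots,\sigma_t)}$ (extended as the identity on $F$). The first observation to record is that $\theta$ sends $1$-paths of $K_t$ to $1$-paths of the lifted complex with the $\theta$-images of their endpoints. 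Consequently every cell of $\mathcal R_t$ is a valid cell over $\widehat V$: its two paths share endpoints in $K_t$, hence their images share endpoints in $\widehat K_t$.

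Next we treat the three move types.

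\emph{(M1).} The move adjoins a fresh edge $x$ parallel to a non-empty $1$-path $w$ of $K_t$, together with $x\to w$. We adjoin $x$ in $\widehat K_t$ with endpoints $\theta(\io(w)),\theta(\ta(w))$; this is legal because $w$ is a $1$-path of $\widehat K_t$. Freshness relative to $F$ is the stated hypothesis. The edge mapping is unchanged, so the invariant persists.

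\emph{(M2).} Rewriting a cell $g\in\mathcal R_t$ using $f\in\mathcal R_t$ is literally the same word substitution in $\widehat K_t$, since $f,g$ are cells there too. We perform exactly this substitution, and do not touch $\mathcal S_t$.

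\emph{(M1)$^{-1}$.} This is the main step. In $K_t$ the move deletes $x$ with $f\colon x\to w$, where $w$ avoids $x$ and $x$ occurs in no other cell of $\mathcal R_t$. In $\widehat K_t$, however, $x$ may still occur in cells of $\mathcal S_t$, so the hypothesis of (M1)$^{-1}$ can fail. We first apply (M2) repeatedly with the cell $f$: each occurrence of $x$ in either path of each cell of $\mathcal S_t$ is replaced by $w$. Each such step is legal because $f$ is distinct from the cells of $\mathcal S_t$. Since $w$ avoids $x$ and the edges of $F$ are not $x$, finitely many steps remove every occurrence. After that, $x$ occurs only in $f$, and we apply (M1)$^{-1}$ in $\widehat K_t$. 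The net effect on $\mathcal S_t$ is exactly the substitution $x\mapsto w$, which is the update rule of the edge mapping in \cref{def:edge-map}. Hence $\mathcal S_{t+1}=\phi_{(\sigma_1,\dots,\sigma_{t+1})}(\mathcal S)$. The auxiliary (M2) moves do not alter the edge mapping of $\widehat\Sigma$, and the deletion performs the same substitution $x\mapsto w$ as in $\Sigma$. Therefore $\phi_{\widehat\Sigma}$ agrees with $\phi_\Sigma$ on $E$, and it fixes $F$ because no edge of $F$ is ever deleted.

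Finally, at $t=s$ the invariant yields the displayed $\widehat K^*$. The step needing care is the (M1)$^{-1}$ case: the deletion hypothesis has to be restored by pre-rewriting $\mathcal S$, and one must check that this bookkeeping matches $\phi_\Sigma$ exactly. The vertex set $\widehat V$ is never changed, in accordance with \cref{thm:GSmoves}.
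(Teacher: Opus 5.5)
Your proof is correct and follows essentially the same route as the paper. You replay $\Sigma$ move by move while maintaining the lifted complex $(\widehat V,E_t\sqcup F,\mathcal R_t\sqcup\mathcal S_t)$, and before each (M1)$^{-1}$ you clear the deleted edge from the cells of $\mathcal S_t$ by (M2) moves using its defining cell, which realizes exactly the edge-mapping substitution. You also spell out details the paper leaves implicit: that $\theta$ carries $1$-paths to $1$-paths, that the rewriting terminates because $w$ avoids $x$, and why $\phi_{\widehat\Sigma}$ fixes $F$.
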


\begin{proof}
Replay $\Sigma$ move by move.  After an initial segment has transformed
$K$ into $(V,E_i,\mathcal R_i)$, maintain the complex
\[
(\widehat V,E_i\sqcup F,\mathcal R_i\sqcup\mathcal S_i),
\]
where the endpoints of edges in $E_i$ are mapped by $\theta$ and
$\mathcal S_i$ is obtained by the accumulated edge substitutions.

An (M2) move rewrites one cell of $\mathcal R_i$ using another; both are
present upstairs, so the same move is valid.  An (M1) move adjoining
$x\to w$ is replayed with endpoints
$\theta(\io(w)),\theta(\ta(w))$.  If (M1)$^{-1}$ deletes $x$ with cell
$x\to w$, then $x$ may still occur in cells of $\mathcal S_i$.  First
use (M2) to replace every such occurrence by $w$, and then delete $x$.
This is exactly the substitution prescribed by the edge mapping.  The
claimed final complex and edge mapping follow.
\end{proof}

\subsection{Recognizing dilated cores}

\begin{lemma}[Cycle Recognition Lemma]\label{lem:cycle-rec}
Let $K$ have vertex set $\{\io,\ta\}\sqcup\Vin$, with $\io\neq\ta$, and
suppose:
\begin{enumerate}
\item $\io$ has no incoming edges and $\ta$ has no outgoing edges;
\item the vertices of $\Vin$ form a strongly connected component;
\item the number of inner edges, those with both endpoints in $\Vin$,
is $|\Vin|=m\geq1$.
\end{enumerate}
Then the inner edges form a directed simple cycle through every vertex of
$\Vin$; for $m=1$ this is a single loop.
\end{lemma}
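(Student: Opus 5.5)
The plan is a counting argument on the inner subgraph. Let $G$ be the directed graph with vertex set $\Vin$ and edge set the inner edges, that is, the edges with both endpoints in $\Vin$.

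First, $G$ is itself strongly connected. Hypothesis (2) provides, for any $a,b\in\Vin$, a directed path from $a$ to $b$ in $K$. Such a path cannot pass through $\io$, since $\io$ has no incoming edges, and it cannot pass through $\ta$ before its end, since $\ta$ has no outgoing edges. Hence every vertex of the path lies in $\Vin$, and every edge of the path is an inner edge.

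Next, count in- and out-degrees in $G$. When $m\geq2$, strong connectivity means every vertex of $\Vin$ has at least one outgoing inner edge, since it must reach some other vertex. Each inner edge contributes exactly one out-degree, and there are exactly $m$ inner edges by (3). Summing over the $m$ vertices therefore forces every vertex to have out-degree exactly $1$, and by the same argument in-degree exactly $1$. A finite directed graph in which every vertex has in- and out-degree $1$ is a disjoint union of directed cycles. Strong connectivity leaves only one such cycle, and it passes through all $m$ vertices. This cycle is simple because the vertices are distinct and each edge is used once. Since the cycle has $m$ edges, it accounts for all inner edges.

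For $m=1$, there is exactly one inner edge. Both of its endpoints lie in the one-element set $\Vin$, so it is a loop at the single inner vertex, which is the stated degenerate cycle.

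The only step needing care is the first: showing that strong connectivity in $K$ implies strong connectivity using only inner edges. This is exactly where hypothesis (1) is used. The degree count after that is routine.
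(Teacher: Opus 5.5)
Your proof is correct and follows the paper's argument. Hypothesis (1) keeps paths between inner vertices inside the inner subgraph, so that subgraph is strongly connected; then $m$ inner edges on $m$ vertices force in- and out-degree exactly one, giving a disjoint union of cycles that strong connectivity reduces to a single cycle. Your separate treatment of $m=1$ makes explicit a point the paper leaves implicit: for a single vertex, strong connectivity gives no edge, and hypothesis (3) is what supplies the loop.
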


\begin{proof}
A path between inner vertices cannot pass through $\io$ or $\ta$, so the
inner subgraph is strongly connected.  Every inner vertex therefore has
at least one incoming and one outgoing inner edge.  Since the sum of the
inner in-degrees, and also of the inner out-degrees, is the number
$m$ of inner edges, every inner vertex has exactly one of each.  The
inner subgraph is thus a disjoint union of directed cycles, and strong
connectivity forces a single cycle.
\end{proof}

\begin{definition}[Admissible core]\label{def:admissible-core}
An \emph{admissible core} for $(B,A_1\cdots A_N)$ is a dilated core of
$F_n$ with $k$ dummy edges whose vertex set, source $\io$, and sink
$\ta$ are those of $\Kdyn(B)$; whose dummy edges are exactly the dummy
edges of $\Kdyn(B)$ with their original endpoints; and whose left and
right edges are $\eL=A_1$ and $\eR=A_N=R_n$.  Thus the left attachment
is $\ta(A_1)$ and the right attachment is $\io(R_n)$.  The active inner
edges are not prescribed as particular original edges and may include
fresh edges introduced by the GS sequence.
\end{definition}

\subsection{The Reduction Theorem}

\begin{theorem}[Reduction to a dilated core]\label{thm:reduction}
Let $B$ be a peelable fast set of $n\geq2$ positive bumps with a general
partition $A_1,\dots,A_N$.  There is a finite GS sequence transforming
$\Kdyn(B)$ into an admissible core for $(B,A_1\cdots A_N)$.  The sequence
never deletes $A_1$, $A_N=R_n$, or any dummy edge of the original
complex.
\end{theorem}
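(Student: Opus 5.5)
We argue by induction on $n$, following the peeling: delete the last bump $b_n$, reduce the smaller complex, lift the resulting GS sequence with \cref{lem:lifting}, and finally absorb the two cells $\lambda_{b_n},\rho_{b_n}$ into the core by a few explicit moves.

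\emph{Base case $n=2$.} By peelability, $b_1$ and $b_2$ cross, so the feet read $L_1<L_2<R_1<R_2$, with dummy intervals interspersed. By \cref{lem:vertex-collapse,prop:inner-connected}, $\Kdyn(B)$ has $k+1$ inner vertices forming a strongly connected component. The inner edges are $L_2$, $R_1$ and the dummies, so there are $k+2$ of them. To get the dilated-core shape, we first adjoin by (M1) a fresh edge $e$ parallel to the portion of $\gap{b_1}$ or $\gap{b_2}$ that closes up a cycle. We then rewrite $\lambda,\rho$ by (M2) so that each has the form ``edge $=$ edge $\cdot C$'', and delete the superfluous feet $L_2,R_1$ by (M1)$^{-1}$. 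This step is a direct computation that parallels the proof of \cref{prop:sliding}.

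\emph{Inductive step.} Let $\widehat B=B\setminus\{b_n\}$. It is peelable with $n-1$ bumps. We truncate the partition: discard $R_n=A_N$ and the intervals to its left that lie beyond $R_{n-1}$, and turn $L_n$ into a dummy interval, merging it with neighbouring dummies if necessary. This gives a general partition for $\widehat B$, and this is the reason general partitions were introduced. Then $\Kdyn(B)$ is obtained from $\Kdyn(\widehat B)$, up to the vertex map $\theta$ given by the extra identification $v_{i(b_n)+1}\sim v_{j(b_n)}$, by adding the edges $F$ (namely $L_n$, $R_n$ and the trailing intervals) and the cells $\mathcal S=\{\lambda_{b_n},\rho_{b_n}\}$.

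The induction hypothesis gives a GS sequence $\Sigma$ carrying $\Kdyn(\widehat B)$ to an admissible core, and it never deletes dummy edges, in particular not the interval that was $L_n$. \Cref{lem:lifting} replays $\Sigma$ upstairs. The resulting complex consists of the dilated core of $F_{n-1}$, with its inner cycle possibly shortened by $\theta$, together with the edges $L_n$ and $R_n$ and the two cells whose paths have been transformed by $\phi_\Sigma$.

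We then read off the shape of the lifted complex:
\begin{itemize}
\item By \cref{lem:GSinv} and \cref{prop:inner-connected}, the inner vertices are still strongly connected.
\item \Cref{lem:cycle-rec} identifies the inner graph once the edge counts agree.
\item \Cref{rem:determinism} converts the paths $\phi_\Sigma(\gap{b_n})$ into ``simple arc plus full laps'' of the cycle.
\end{itemize}

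The remaining task is to make $L_n$ active and to reattach $R_n$ as the new $\eR$. The old right edge $R_{n-1}$ is now inner, because $\theta$ identifies its endpoint with an inner vertex. Using (M2) we rewrite $\lambda_{b_n}$ into the form $L_n=L_n\,C$, which turns $L_n$ into an active inner edge of the cycle. Using the cell of $R_{n-1}$ and (M2), we rewrite $\rho_{b_n}$ into the form $R_n=C\,R_n$. The count of $n-1$ active and $k$ dummy inner edges then has to be checked.

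\emph{Main obstacle.} I expect the hardest step to be controlling $\phi_\Sigma(\gap{b_n})$, since $\gap{b_n}$ contains right feet of crossers and edges deleted during $\Sigma$. We need it, modulo (M2) rewrites by the core cells, to become exactly one lap $C$ starting at the correct vertex. To achieve this, I would use peelability ($b_n$ crosses some earlier bump, so $L_n$ sits inside the gap of an existing core edge) together with the homotopy-freedom of \cref{lem:dilated-paths}: insert or remove full laps until the length matches, and then invoke \cref{rem:determinism}.
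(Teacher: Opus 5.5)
\textbf{There is a genuine gap in the final absorption step of your inductive step.} The overall architecture matches the paper's: truncate, induct, lift by \cref{lem:lifting}, then absorb $\lambda_{b_n},\rho_{b_n}$. But the absorption as you describe it fails.

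Write $v_R=\io(R_{n-1})$ and $t=\ta(R_{n-1})$. Write $L_n\colon z\to w$ and $C(v_R)=XL_nY$ for the old inner cycle. Let $D=d_1\cdots d_\ell$ be the dummy intervals between $R_{n-1}$ and $R_n$.

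\emph{The lifted complex is not a cycle.} The bump $b_n$ identifies $\ta(D)=\io(R_n)$ with $w$. So the lifted complex contains the chord $R_{n-1}D\colon v_R\to w$ alongside the arc $XL_n$, giving $n+k$ inner edges on $n+k-1$ inner vertices. Your plan makes $L_n$ active and rewrites $\rho_{b_n}$ while keeping $R_{n-1}$. It deletes nothing, so the count you postpone is off by one.

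\emph{The target is unreachable in general.} If $\ell>0$, the dummy $d_\ell$ also enters $w$, and it must survive with its endpoints. So $L_n$ can never lie on the final inner cycle and has to be deleted. If moreover $z\neq v_R$, then $L_n$ is the only edge leaving $z$. The final cycle needs some other edge out of $z$, so a \emph{fresh} edge is unavoidable, and your inductive step adjoins none.

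\emph{The case $\ell=0$.} Here your route can be repaired by also deleting $R_{n-1}$: (M2) with $L_n=L_nYR_{n-1}$ turns $R_{n-1}=XL_nYR_{n-1}$ into $R_{n-1}=XL_n$.

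\emph{The anticipated obstacle is routine.} Write $\gap{b_n}=PR_{n-1}D$. Then $\phi_\Sigma(P)$ avoids $A_1$ and $R_{n-1}$, so it equals $YC(v_R)^s$ by \cref{rem:determinism}. The laps are absorbed by (M2) with $R_{n-1}=C(v_R)R_{n-1}$. The resulting word $YR_{n-1}D$ closes up through the chord, not around any lap of the old cycle.

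\textbf{The missing idea is one fresh edge that replaces both $L_n$ and $R_{n-1}$.} Adjoin $p\colon z\to t$ with cell $p=L_nYR_{n-1}$. By (M2), $L_n=L_nYR_{n-1}D$ becomes $L_n=pD$, and the cell of $R_{n-1}$ becomes $R_{n-1}=Xp$. Substitute these everywhere, then delete $L_n$ and $R_{n-1}$ by (M1)$^{-1}$. This leaves $n-1$ active and $k$ dummy inner edges. By \cref{lem:skeleton-invariants,lem:cycle-rec} they form the simple cycle $XpDY$, and every surviving cell has the form ``edge times a lap''.

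Your base case needs the same device. The fresh edge must be the zipper $s=L_2D_2R_1\colon\io(L_2)\to\ta(R_1)$, which is not a portion of $\gap{b_1}$ or $\gap{b_2}$; it is exactly $p$ in the passage from one bump to two.

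Two smaller points:
\begin{itemize}
\item Do not merge $L_n$ with neighbouring dummies. \Cref{lem:lifting} needs every edge of the truncated complex to be an edge of the full one.
\item Use peelability to check that $b_n$ is not the leftmost bump and that no left foot lies beyond $R_{n-1}$. Both are needed for the truncation to be a general partition.
\end{itemize}
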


We use the following consequence of the structural results in
\cref{sec:complex}.

\begin{lemma}\label{lem:skeleton-invariants}
Let $\Sigma$ be any GS sequence beginning at $\Kdyn(B)$, where $B$ is
irreducible with at least two bumps, and let $K'$ be the resulting
complex.  The vertex set is unchanged, $\io$ has no incoming edge,
$\ta$ has no outgoing edge, and the inner vertices form a strongly
connected component of $K'$.
\end{lemma}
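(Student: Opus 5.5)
The plan is to combine the structural facts about $\Kdyn(B)$ from \cref{sec:complex} with the invariance statements of \cref{lem:GSinv}. The lemma makes four claims about $K'$, and each is checked first on $\Kdyn(B)$ and then carried through the moves of $\Sigma$ one at a time.

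\emph{Vertex set.} The final sentence of \cref{thm:GSmoves} says that (M1), (M1)$^{-1}$ and (M2) leave the vertex set unchanged. By induction on the length of $\Sigma$, the vertex set of $K'$ is therefore that of $\Kdyn(B)$. By \cref{lem:vertex-collapse}, this set is $\{\io,\ta\}$ together with the $n+k-1$ inner vertices. The general partition here is the one fixed with $B$, so the lemma applies for any $k\ge0$.

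\emph{Source and sink.} By \cref{lem:vertex-collapse}(1),(2), in $\Kdyn(B)$ the vertex $\io$ has no incoming edge and $\ta$ has no outgoing edge. \cref{lem:GSinv}(1) says that a single move preserves, for each vertex, whether it has an incoming edge and whether it has an outgoing edge. Inducting along $\Sigma$, $\io$ still has no incoming edge in $K'$ and $\ta$ still has no outgoing edge.

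\emph{Inner strong connectivity.} By \cref{prop:inner-connected}, which needs $B$ irreducible with $n\ge2$, the inner vertices form a strongly connected component of $\Kdyn(B)$. \cref{lem:GSinv}(2) says that reachability between any two vertices of the common vertex set is preserved by each move. Hence the reachability relation on the fixed vertex set is the same in $K'$ as in $\Kdyn(B)$. Strongly connected components are determined by this relation, so the inner vertices again form exactly one strongly connected component of $K'$.

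The only point needing care is that \cref{lem:GSinv} is stated for a single move. Its proof treats (M1) and (M2) directly, and (M1)$^{-1}$ follows by the symmetry of both assertions in $K$ and $K'$. The step-by-step induction along $\Sigma$ therefore goes through. I expect no real obstacle; the substantive input was already supplied by \cref{prop:inner-connected}.
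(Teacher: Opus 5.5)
Your proposal is correct and matches the paper's proof, which simply combines \cref{lem:GSinv,lem:vertex-collapse,prop:inner-connected}. Your move-by-move induction along $\Sigma$ just makes explicit what the paper leaves implicit.
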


\begin{proof}
Combine \cref{lem:GSinv,lem:vertex-collapse,prop:inner-connected}.
\end{proof}

\subsection{The base case \texorpdfstring{$n=2$}{n=2}}

\begin{proof}[Proof of \cref{thm:reduction}, base case $n=2$]
Write $B=\{x,y\}$.  Peelability means that the two bumps cross, so, with
$x$ leftmost, the partition has the form
\[
e_1D_1e_2D_2e_3D_3e_4,
\qquad
e_1=L_x,\ e_2=L_y,\ e_3=R_x,\ e_4=R_y,
\]
where the $D_i$ are possibly empty paths of dummy edges.  The positive
cells are
\[
e_1=e_1D_1e_2D_2,\qquad e_3=D_1e_2D_2e_3,
\]
\[
e_2=e_2D_2e_3D_3,\qquad e_4=D_2e_3D_3e_4.
\]
The two forced identifications are
$\ta(e_1)\sim\io(e_3)$ and $\ta(e_2)\sim\io(e_4)$.  There are $k+1$
inner vertices, where $k$ is the total number of dummy edges.

\begin{figure}[ht]
\centering
\begin{tikzpicture}[every node/.style={font=\small},>=Stealth]
  \begin{scope}[xscale=.72]
    \foreach \x/\name in {0/a,1/b,2/c,3/d,4/e,5/f,6/g,7/h} {
      \node[circle,fill,inner sep=1.2pt] (\name) at (\x,0) {};
    }
    \draw[->,thick] (a)--(b) node[midway,above] {$e_1$};
    \draw[->,thick] (b)--(c) node[midway,above] {$D_1$};
    \draw[->,thick] (c)--(d) node[midway,above] {$e_2$};
    \draw[->,thick] (d)--(e) node[midway,above] {$D_2$};
    \draw[->,thick] (e)--(f) node[midway,above] {$e_3$};
    \draw[->,thick] (f)--(g) node[midway,above] {$D_3$};
    \draw[->,thick] (g)--(h) node[midway,above] {$e_4$};
    \draw[dashed] (b) to[bend left=55]
      node[midway,above=2pt] {$\ta(e_1)\sim\io(e_3)$} (e);
    \draw[dashed] (d) to[bend right=55]
      node[midway,below=2pt] {$\ta(e_2)\sim\io(e_4)$} (g);
    \node at (3.5,-1.55) {before the zipper};
  \end{scope}
  \begin{scope}[xshift=9.2cm,yshift=.15cm]
    \node[circle,fill,inner sep=1.2pt] (A) at (0,1.2) {};
    \node[circle,fill,inner sep=1.2pt] (B) at (1.2,0) {};
    \node[circle,fill,inner sep=1.2pt] (C) at (0,-1.2) {};
    \node[circle,fill,inner sep=1.2pt] (D) at (-1.2,0) {};
    \node[circle,draw,inner sep=1.5pt] (I) at (0,-2.45) {$\io$};
    \node[circle,draw,inner sep=1.5pt] (T) at (2.55,0) {$\ta$};
    \draw[->,thick] (A) to[bend left=18]
      node[midway,above right] {$D_3$} (B);
    \draw[->,thick] (B) to[bend left=18]
      node[midway,below right] {$D_2$} (C);
    \draw[->,thick] (C) to[bend left=18]
      node[midway,below left] {$D_1$} (D);
    \draw[->,thick] (D) to[bend left=18]
      node[midway,above left] {$s$} (A);
    \draw[->,thick] (I)--(C) node[midway,right] {$e_1$};
    \draw[->,thick] (B)--(T) node[midway,above] {$e_4$};
    \node at (.35,-2.95) {after the zipper};
  \end{scope}
\end{tikzpicture}
\caption{The two-bump base case.  Each $D_i$ denotes a possibly empty
path of dummy edges.  In the left panel, the undirected dashed curves
indicate the forced vertex identifications
$\ta(e_1)\sim\io(e_3)$ and $\ta(e_2)\sim\io(e_4)$.  After adjoining the
zipper edge $s$ and deleting $e_2,e_3$, the right panel is a directed
cycle with cyclic word $D_3D_2D_1s$.}
\label{fig:zipper}
\end{figure}

Adjoin a fresh edge
\[
s\colon\io(e_2)\longrightarrow\ta(e_3)
\]
with positive cell $s=e_2D_2e_3$.  Using this cell in (M2), rewrite
\[
e_3=D_1(e_2D_2e_3)\longrightarrow e_3=D_1s,
\qquad
e_2=(e_2D_2e_3)D_3\longrightarrow e_2=sD_3.
\]
Substituting these paths into the cell of $s$ and the cells of $e_1,e_4$
gives
\[
s=s(D_3D_2D_1)s,
\]
\[
e_1=e_1D_1sD_3D_2,
\qquad
e_4=D_2D_1sD_3e_4.
\]
Now delete $e_2$ and $e_3$ by two moves (M1)$^{-1}$, and call the result
$K^*$.

The inner edges of $K^*$ are $s$ and the $k$ dummy edges, so their number
is the number $k+1$ of inner vertices.  By
\cref{lem:skeleton-invariants,lem:cycle-rec}, the inner edges form a
directed simple cycle.  The relation $s=s(D_3D_2D_1)s$ shows that
$D_3D_2D_1s$ is a closed path based at $\ta(s)$; see also
\cref{fig:zipper}.  Since it traverses every inner edge once, it is the
full cycle $C(\ta(s))$.  The paths $D_1sD_3D_2$ and $D_2D_1sD_3$ are the
corresponding cyclic rotations based at $\ta(e_1)$ and $\io(e_4)$.
Consequently the three remaining positive cells are
\[
s=sC(\ta(s)),\qquad
e_1=e_1C(\ta(e_1)),\qquad
e_4=C(\io(e_4))e_4.
\]
Thus $K^*$ is an admissible core.  Only $e_2$ and $e_3$ were deleted, so
$A_1=e_1$, $A_N=e_4$, and every dummy edge survives.
\end{proof}

\subsection{The inductive step}

\begin{proof}[Proof of \cref{thm:reduction}, inductive step]
Assume the theorem for peelable sets of $n$ bumps, and let
$B=\{b_1,\dots,b_{n+1}\}$ be peelable, with
$R_1<\cdots<R_{n+1}$ and a general partition $\mathcal A$ having $k$
dummy intervals.

\emph{Truncation.}
No left foot lies to the right of $R_n$.  This is clear for
$b_1,\dots,b_n$, while $b_{n+1}$ crosses an earlier bump by
\cref{lem:recognize}, so $L_{n+1}<R_j\leq R_n$ for some $j\leq n$.
Hence $\mathcal A$ ends with
\[
\mathcal A=\cdots R_n d_1d_2\cdots d_\ell R_{n+1}.
\]
Here $\ell\geq0$ and the $d_i$ are dummy intervals.  Let
$B_n=B\setminus\{b_{n+1}\}$ and form a truncated general partition
$\mathcal A_n$ by discarding $d_1,\dots,d_\ell,R_{n+1}$ and regarding
the interval $L_{n+1}$ as a dummy interval.  This is again a general
partition.  Indeed, if $b_{n+1}$ were also the leftmost bump, its span
would contain every other span, so it would cross no earlier bump,
contrary to peelability.  The truncated partition has
\[
k_n=k-\ell+1
\]
dummy intervals.  Write
\[
K_{\mathrm{tr}}=\Kdyn(B_n;\mathcal A_n),\qquad
K_{\mathrm{full}}=\Kdyn(B;\mathcal A).
\]

The partition gap paths of $b_1,\dots,b_n$ and their forced
identifications are the same in the two partitions.  Write
$K_{\mathrm{tr}}=(V,E,\mathcal R)$ and
\[
K_{\mathrm{full}}=(\widehat V,E\sqcup F,
\mathcal R\sqcup\{\lambda_{n+1},\rho_{n+1}\}),
\qquad
F=\{d_1,\dots,d_\ell,R_{n+1}\}.
\]
Let
\[
v_R=\io(R_n),\qquad
w=\ta(L_{n+1}),\qquad
\ta_n=\ta(R_n),\qquad
D=d_1\cdots d_\ell.
\]
The bump $b_{n+1}$ identifies the terminal vertex of $D$ with $w$.
There are two cases, illustrated in \cref{fig:truncation}.

If $\ell>0$, the path $D$ runs from $\ta_n$ to $w$; the larger vertex
set adds the $\ell-1$ interior vertices of $D$ and the new terminal
vertex $\ta_{n+1}=\ta(R_{n+1})$, and the map
$\theta\colon V\to\widehat V$ is the inclusion.  If $\ell=0$, then
$D$ is empty, $\ta_n$ is identified with $w$, and $\widehat V$ is the
quotient of $V$ by this identification together with the new terminal
vertex $\ta_{n+1}$.  Set
\[
t:=\theta(\ta_n)=
\begin{cases}
\ta_n,&\ell>0,\\
w,&\ell=0.
\end{cases}
\]
Thus $D$ is a path from $t$ to $w$ (the empty path when $\ell=0$), and
$R_{n+1}$ runs from $w$ to $\ta_{n+1}$.

\begin{figure}[ht]
\centering
\begin{tikzpicture}[every node/.style={font=\small},>=Stealth]
  \begin{scope}
    \node[circle,fill,inner sep=1.2pt,label=below:$v_R$] (vr) at (0,0) {};
    \node[circle,fill,inner sep=1.2pt,label=below:$t$] (tn) at (1.8,0) {};
    \node[circle,fill,inner sep=1.2pt,label=below left:$w$] (w) at (4.2,0) {};
    \node[circle,draw,inner sep=1.5pt] (tn1) at (6.2,0) {$\ta_{n+1}$};
    \node[circle,fill,inner sep=1.2pt,label=below:$z$] (z) at (4.2,-1.35) {};
    \draw[->,thick] (vr)--(tn) node[midway,above] {$R_n$};
    \draw[->,thick] (tn)--(w) node[midway,above] {$D=d_1\cdots d_\ell$};
    \draw[->,thick] (w)--(tn1) node[midway,above] {$R_{n+1}$};
    \draw[->,thick] (z)--(w) node[midway,right] {$L_{n+1}$};
    \node at (3.1,-2.0) {$\ell>0$};
  \end{scope}
  \begin{scope}[xshift=8.3cm]
    \node[circle,fill,inner sep=1.2pt,label=below:$v_R$] (vr0) at (0,0) {};
    \node[circle,fill,inner sep=1.2pt,label=above:{$w=t$}] (tw) at (2.1,0) {};
    \node[circle,draw,inner sep=1.5pt] (tn10) at (4.45,0) {$\ta_{n+1}$};
    \node[circle,fill,inner sep=1.2pt,label=below:$z$] (z0) at (2.1,-1.35) {};
    \draw[->,thick] (vr0)--(tw) node[midway,above] {$R_n$};
    \draw[->,thick] (tw)--(tn10) node[midway,above] {$R_{n+1}$};
    \draw[->,thick] (z0)--(tw) node[midway,right] {$L_{n+1}$};
    \node at (2.2,-2.0) {$\ell=0$};
  \end{scope}
\end{tikzpicture}
\caption{The two vertex configurations in the truncation step.  When
$\ell>0$, the discarded dummy path $D$ joins $t=\ta_n$ to $w$; when
$\ell=0$, the vertices satisfy $t=\ta_n=w$.}
\label{fig:truncation}
\end{figure}

Thus $K_{\mathrm{full}}$ is related to $K_{\mathrm{tr}}$ exactly as
the larger complex in the Lifting Lemma is related to the smaller one.

\emph{Applying the induction hypothesis and lifting.}
Let $\Sigma$ transform $K_{\mathrm{tr}}$ into an admissible core
$K_{\mathrm{core}}=(V,E^*,\mathcal R^*)$.  Its inner cycle contains
the $k_n$ dummy edges of the truncated partition, including
$L_{n+1}$, and $n-1$ active edges.  Its left and right edges are
$A_1$ and $R_n$.  Since $\Sigma$ never deletes $A_1,R_n$, or a dummy
edge, its edge mapping fixes these edges, in particular $L_{n+1}$.

Lift $\Sigma$ to $K_{\mathrm{full}}$ by \cref{lem:lifting}.  The
resulting mixed complex has the mapped copy of $K_{\mathrm{core}}$ together
with the new edges $F$ and two lifted cells
$\lambda_{n+1}^*,\rho_{n+1}^*$.

Write $L_{n+1}\colon z\to w$.  Since $L_{n+1}$ is a dummy edge on the
old inner cycle, write
\[
C(v_R)=X L_{n+1}Y,
\]
where $X$ is the simple cycle path from $v_R$ to $z$ and $Y$ is the
simple cycle path from $w$ to $v_R$.  The two forms of the mixed complex
are shown in \cref{fig:mixed-complex}.

\begin{figure}[ht]
\centering
\begin{tikzpicture}[every node/.style={font=\small},>=Stealth]
  \begin{scope}
    \node[circle,fill,inner sep=1.2pt,label=above left:$v_R$] (vr0) at (-1.7,0) {};
    \node[circle,fill,inner sep=1.2pt,label=below:$z$] (z0) at (0,-1.45) {};
    \node[circle,fill,inner sep=1.2pt,label=above left:{$w=t$}] (w0) at (1.7,0) {};
    \node[circle,draw,inner sep=1.5pt] (tn10) at (3.35,0) {$\ta_{n+1}$};
    \draw[->,thick] (vr0)--(z0) node[midway,below left] {$X$};
    \draw[->,thick] (z0)--(w0) node[midway,below right] {$L_{n+1}$};
    \draw[->,thick] (w0) to[bend right=38] node[midway,above] {$Y$} (vr0);
    \draw[->,thick] (vr0)--(w0) node[midway,below] {$R_n$};
    \draw[->,thick] (w0)--(tn10) node[midway,above] {$R_{n+1}$};
    \node at (.75,-2.15) {$\ell=0$};
  \end{scope}
  \begin{scope}[xshift=8.25cm]
    \node[circle,fill,inner sep=1.2pt,label=above left:$v_R$] (vrp) at (-1.8,0) {};
    \node[circle,fill,inner sep=1.2pt,label=above:$t$] (tp) at (0,0) {};
    \node[circle,fill,inner sep=1.2pt,label=below:$z$] (zp) at (0,-1.45) {};
    \node[circle,fill,inner sep=1.2pt,label=above left:$w$] (wp) at (1.8,0) {};
    \node[circle,draw,inner sep=1.5pt] (tn1p) at (3.45,0) {$\ta_{n+1}$};
    \draw[->,thick] (vrp)--(zp) node[midway,below left] {$X$};
    \draw[->,thick] (zp)--(wp) node[midway,below right] {$L_{n+1}$};
    \draw[->,thick] (wp) to[bend right=38] node[midway,above] {$Y$} (vrp);
    \draw[->,thick] (vrp)--(tp) node[midway,below] {$R_n$};
    \draw[->,thick] (tp)--(wp) node[midway,below] {$D$};
    \draw[->,thick] (wp)--(tn1p) node[midway,above] {$R_{n+1}$};
    \node at (.8,-2.15) {$\ell>0$};
  \end{scope}
\end{tikzpicture}
\caption{The mixed complex after lifting the inductive GS sequence.  In
both panels the old inner cycle contains
$v_R\xrightarrow{X}z\xrightarrow{L_{n+1}}w\xrightarrow{Y}v_R$.  When
$\ell=0$, the edge $R_n\colon v_R\to w=t$ is a chord.  When $\ell>0$,
the corresponding chord is subdivided as
$v_R\xrightarrow{R_n}t\xrightarrow{D}w$.  In both cases
$R_{n+1}\colon w\to\ta_{n+1}$.}
\label{fig:mixed-complex}
\end{figure}

\emph{Normalizing the lifted cells.}
The partition gap path of $b_{n+1}$ factors as
\[
\gap{b_{n+1}}=P\,R_n\,D,
\]
where $P$ is the subpath from $L_{n+1}$ to $R_n$, viewed in
$K_{\mathrm{tr}}$ as a path from $w$ to $v_R$.  Hence
\[
\lambda_{n+1}^*\colon
L_{n+1}=L_{n+1}\phi_\Sigma(P)R_nD,
\]
\[
\rho_{n+1}^*\colon
R_{n+1}=\phi_\Sigma(P)R_nDR_{n+1}.
\]
The path $\phi_\Sigma(P)$ runs in the smaller admissible core from $w$
to the inner vertex $v_R$.  It cannot contain $R_n$, since any path in
that core containing $R_n$ ends at its terminal vertex; and it cannot
contain $A_1$, since no edge enters $\io$.  Thus it uses only the inner
cycle.  Write
\[
\phi_\Sigma(P)=Y C(v_R)^s
\]
for some $s\geq0$.
Using the cell $R_n=C(v_R)R_n$, apply (M2) repeatedly to absorb the $s$
full cycles.  The lifted cells become
\[
\lambda'\colon L_{n+1}=L_{n+1}YR_nD,
\qquad
\rho'\colon R_{n+1}=YR_nDR_{n+1}.
\]

\emph{The geometric substitution.}
The cell of $R_n$ is
\begin{equation}\label{eq:right-factor}
R_n=XL_{n+1}YR_n.
\end{equation}
Adjoin a fresh edge
\[
p\colon z\longrightarrow t
\]
with positive cell $p=L_{n+1}YR_n$.  Replacing this occurrence by $p$ in
$\lambda'$ and in \eqref{eq:right-factor} gives
\[
\lambda''\colon L_{n+1}=pD,
\]
\begin{equation}\label{eq:right-short}
R_n=Xp.
\end{equation}
The endpoints used below are summarized by
\[
\begin{array}{c|c}
D&t\longrightarrow w\\
Y&w\longrightarrow v_R\\
X&v_R\longrightarrow z\\
p&z\longrightarrow t.
\end{array}
\]

Use (M2) to substitute $pD$ for every remaining occurrence of
$L_{n+1}$, and then delete $L_{n+1}$ and $\lambda''$ by
(M1)$^{-1}$.  Next substitute $Xp$ for every remaining occurrence of
$R_n$, using \eqref{eq:right-short}, and delete $R_n$ and that cell.
Call the resulting complex $K_{\mathrm{fin}}$.  Its positive cells are
\[
e=e\,C(\ta(e))[L_{n+1}:=pD]
\quad\text{for each old active inner edge $e$ and for $e=A_1$},
\]
\[
p=pDYXp,
\qquad
R_{n+1}=YXpD\,R_{n+1}.
\]

\emph{Recognition of the final core.}
The inner edges of $K_{\mathrm{fin}}$ are the $n-1$ old active inner
edges, the fresh edge $p$, and the $k$ dummy edges of the original
partition.  There are $n+k$ of them.  The number of inner vertices is
also $n+k$: $K_{\mathrm{tr}}$ has $n+k-\ell$ inner vertices; for
$\ell>0$ the larger complex adds $\ell-1$ inner vertices of $D$ and turns the old terminal
vertex into an inner vertex, while for $\ell=0$ it identifies the old
terminal vertex with $w$ and then makes that class inner.  By
\cref{lem:skeleton-invariants,lem:cycle-rec}, these edges form a directed
simple cycle through all inner vertices.

For an old active edge $e$, and likewise for $A_1$, its old full-cycle
path traversed every old inner edge once, including $L_{n+1}$.  Replacing
that occurrence by $pD$ produces a closed path traversing every new
inner edge once, so it is the full new cycle based at $\ta(e)$.  The word
$DYXp$ is closed at $\ta(p)=t$ by the endpoint table and
traverses every new inner edge once; hence the cell of $p$ is
$p=pC(\ta(p))$.  Finally $YXpD$ is the cyclic rotation of $DYXp$ based at
$w=\io(R_{n+1})$, so the last cell is
\[
R_{n+1}=C(w)R_{n+1}.
\]
Therefore $K_{\mathrm{fin}}$ is an admissible core for
$(B,A_1\cdots A_N)$.

The lifted sequence never deleted $A_1$, $R_n$, or a dummy edge of the
truncated partition.  The additional moves delete only $L_{n+1}$ and
$R_n$, both active edges of the original complex $K_{\mathrm{full}}$.
Thus $A_1$, $A_N=R_{n+1}$, and every original dummy edge survives, completing
the induction.
\end{proof}

\section{Proof of the Main Theorem}\label{sec:mainproof}

\begin{theorem}\label{thm:main-formal}
For $n\geq2$, every group generated by an irreducible geometrically fast
set of $n$ positive bumps is isomorphic to $F_n$.
\end{theorem}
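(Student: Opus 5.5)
Given an irreducible geometrically fast set $B$ of $n\geq2$ positive bumps, I will first apply the Peelability Theorem (\cref{thm:peelable}). It produces a fast set $B'$ of $n$ positive bumps whose dynamical diagram is peelable, with $\langle B'\rangle=\langle B\rangle$. So it suffices to treat peelable fast sets. The dynamical diagram depends only on the enumerated set of bumps (\cref{rem:marking-indep}), so I may re-mark freely. By \cref{thm:diagram-isomorphism} and \cref{rem:canonical-exists}, I may replace $B'$ by a fast realization of the same peelable diagram in canonical configuration, without changing the isomorphism type of the generated group. Call this realization $B$ again and let $A_1,\dots,A_{2n+s}$ be its canonical partition.

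Next I invoke the Representation Theorem (\cref{thm:representation}): $\langle B\rangle\cong\DG(\Kdyn(B),A_1\cdots A_{2n+s})$. A canonical partition is in particular a general partition (\cref{def:partition}), so the Reduction Theorem (\cref{thm:reduction}) applies. It gives a GS sequence $\Sigma$ from $\Kdyn(B)$ to an admissible core $K^*$, which is a dilated core of $F_n$ with $k=s$ dummy edges. By \cref{lem:transport},
\[
\DG\bigl(\Kdyn(B),A_1\cdots A_N\bigr)\cong\DG\bigl(K^*,\phi_\Sigma(A_1\cdots A_N)\bigr).
\]

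It remains to check that $\phi_\Sigma(A_1\cdots A_N)$ is a non-empty positive path from $\io$ to $\ta$ in $K^*$. The base path runs from $[v_1]=\io$ to $[v_{N+1}]=\ta$, and the edge mapping preserves endpoints (\cref{def:edge-map}). Hence the image is a $1$-path from $\io$ to $\ta$; it is non-empty because $\io\neq\ta$. Then \cref{cor:dilated-Fn} gives $\DG(K^*,\phi_\Sigma(A_1\cdots A_N))\cong F_n$, which completes the argument.

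All the heavy lifting is already done by the earlier results. The only point needing care is the bookkeeping in the middle step: the Reduction Theorem is stated for general partitions while the Representation Theorem uses the canonical one, so I must confirm that the canonical partition qualifies and that the transported base keeps the endpoints $\io,\ta$, which are preserved by every GS move (\cref{lem:GSinv}). The case $n=2$ needs no separate treatment, since it is covered by the base case of the Reduction Theorem.
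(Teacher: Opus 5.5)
Your proposal is correct and follows the paper's proof step by step. The steps are the same: peelability via swaps, a canonical-configuration realization of the peelable diagram, the Belk--Stott representation, the Reduction Theorem, transport of the base path, and \cref{cor:dilated-Fn}. The only difference is cosmetic: the paper observes that an irreducible diagram with $n\geq2$ bumps has no isolated bumps, so $s=0$ and the admissible core is a shifted core, whereas you carry a general $s$ and let \cref{cor:dilated-Fn} absorb the dummy edges, which is equally valid.
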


\begin{proof}
Let $G=\langle B\rangle$, where $B$ is irreducible and fast and
$|B|=n\geq2$.  By \cref{thm:peelable,cor:swap-groups}, there is a fast
set $B'$ with the same generated group whose dynamical diagram $D'$ is
peelable.  By \cref{rem:canonical-exists}, choose a fast realization
$B''$ of $D'$ in canonical configuration.  By
\cref{thm:diagram-isomorphism},
$\langle B''\rangle\cong\langle B'\rangle=G$.

The diagram $D'$ is irreducible with at least two bumps, so it has no
isolated bumps in the sense of \cref{def:isolated}.  Its canonical
partition therefore consists of its $2n$ feet,
$A_1,\dots,A_{2n}$, and has no dummy intervals.  By
\cref{thm:representation},
\[
G\cong\DG\bigl(\Kdyn(B''),A_1A_2\cdots A_{2n}\bigr).
\]
The Reduction Theorem gives a GS sequence $\Sigma$ from $\Kdyn(B'')$ to
an admissible core $K$, here a shifted core of $F_n$.  Transporting the
base path yields
\[
\DG\bigl(\Kdyn(B''),A_1\cdots A_{2n}\bigr)
\cong
\DG\bigl(K,\phi_\Sigma(A_1\cdots A_{2n})\bigr).
\]
The transported word is a $1$-path from $\io$ to $\ta$.  By
\cref{cor:dilated-Fn}, the latter diagram group is isomorphic to $F_n$.
Thus $G\cong F_n$.
\end{proof}

Consequently, $\mathcal C_n$ consists of a single isomorphism class,
represented by $F_n$.

\begin{remark}[Further applications]\label{rem:future}
The techniques of this paper apply beyond the class $\mathcal{C}_n$.
In fact, they can also be used to give alternative proofs to the
isomorphism results from \cite[Sections~5--7]{GolanAllWay}.
More generally, similar techniques show that a large class of closed
subgroups of Thompson's groups $F_m$ are isomorphic to Thompson
groups and enable the construction, for all $n\geq m\geq2$, of a
maximal subgroup of Thompson's group $F_m$ isomorphic to $F_n$.  We
will expand on these results in future papers.
\end{remark}

\end{document}